\newtheorem{lemma}{Lemma}
\newtheorem{theorem}{Theorem} 
\newtheorem{corollary}{Corollary} 
\newtheorem{definition}{Definition}
\newtheorem{claim}{Claim}
\newtheorem{remark}{Remark}
\newtheorem{proposition}{Proposition}
\newtheorem{observation}{Observation}
\newcommand{\cH}{\mathcal{H}}
\newcommand{\E}{\mathcal{E}}
\newcommand{\F}{\mathcal{F}}
\newcommand{\h}{\mathcal{H}}
\newcommand{\lp}{\left (}
\newcommand{\rp}{\right )}
\newcommand{\abs}[1]{\left\lvert{#1}\right\rvert}
\newcommand{\floor}[1]{\left\lfloor{#1}\right\rfloor}
\newcommand{\ceil}[1]{\left\lceil{#1}\right\rceil}
\DeclareMathOperator{\Tr}{Tr}
\begin{document}
\title{Ramsey numbers of Berge-hypergraphs and related structures}

\author{
	Nika Salia\thanks{Alfr\'ed R\'enyi Institute of Mathematics     
		({\tt salia.nika@renyi.hu}).} \thanks{Central European University, Budapest.}\and	
	Casey Tompkins\thanks{Karlsruhe Institue of Technology, Germany ({\tt ctompkins496@gmail.com}).} \thanks{Discrete Mathematics Group, Institute for Basic Science (IBS), Daejeon, Republic of Korea.}\and
	Zhiyu Wang\thanks{University of South Carolina, Columbia, SC, 29208
		({\tt zhiyuw@math.sc.edu}).} 
	\and
	Oscar Zamora\thanks{Central European University, Budapest
		({\tt oscarz93@yahoo.es}).} \thanks{Universidad de Costa Rica, San Jos\'e.}
                }
\maketitle

%todo CT cleanup abstract

\begin{abstract}
For a graph $G=(V,E)$, a hypergraph $\cH$ is called a \textit{Berge}-$G$, denoted by $BG$, if there is
an injection $i\colon V(G)\to V(\mathcal{H})$ and 
a bijection $f\colon E(G) \to E(\mathcal{H})$ such that for all $e=uv \in E(G)$, we have $\{i(u), i(v)\} \subseteq f(e)$. Let the Ramsey number $R^r(BG,BG)$ be the smallest integer $n$ such that for any $2$-edge-coloring of a complete $r$-uniform hypergraph on $n$ vertices, there is a monochromatic Berge-$G$ subhypergraph.
In this paper, we show that the 2-color Ramsey number of Berge cliques is linear. In particular, we show that $R^3(BK_s, BK_t) = s+t-3$ for $s,t \geq 4$ and $\max\{s,t\} \geq 5$ where $BK_n$ is a Berge-$K_n$ hypergraph. For higher uniformity, we show that $R^4(BK_t, BK_t) = t+1$ for $t\geq 6$ and $R^k(BK_t, BK_t)=t$ for $k \geq 5$ and $t$ sufficiently large. We also investigate the Ramsey number of trace
hypergraphs, suspension hypergraphs and expansion hypergraphs.
\end{abstract}

%\abstract{
%A hypergraph $\h$ is called a Berge copy of a graph $G$ if there are 
%Given a graph $G$ and hypergraph $\h$, a Berge copy of $G$, denoted $BG$, is a 
%}

\section{Introduction}
Given a hypergraph $\h$, let $v(\h)$ denote the number of vertices of $\h$ and $e(\h)$ denote the number of hyperedges. We denote the sets of vertices and hyperedges of $\h$ by $V(\h)$ and $E(\h)$, respectively.  We say that a hypergraph is $r$-uniform if every hyperedge has size $r$. By $K_t^{(r)}$ we denote the $t$-vertex $r$-uniform clique (if $r=2$ we omit the superscript).  The set of the first $n$ integers is denoted by $[n]$, and for a set $S$, we denote by $\binom{S}{r}$ the set of $r$-element subsets of $S$. Furthermore we denote the power set of a set $S$ by $2^S$. For sets $A$ and $B$ we denote their disjoint union by $A\sqcup B$.

Ramsey theory is among the oldest and most intensely investigated topics in combinatorics.  It began with the seminal result of Ramsey from 1930.
\begin{theorem}[Ramsey~\cite{Ramsey}]
Let $r,t$ and $k$ be positive integers.  Then there exists an integer $N$ such that any coloring of the $N$-vertex $r$-uniform complete hypergraph with $k$ colors contains a monochromatic copy of the $t$-vertex $r$-uniform complete hypergraph.
\end{theorem}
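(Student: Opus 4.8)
The plan is to prove the statement by induction on the uniformity $r$, following the classical Erd\H{o}s--Rado argument; for a fixed $r$ I would carry all $k$ colors through the pigeonhole steps, so no separate reduction on the number of colors is needed. For the base case $r=1$, a coloring of the $N$ singletons with $k$ colors is simply a partition of an $N$-element set into $k$ classes, so by pigeonhole one class has at least $t$ elements as soon as $N \geq k(t-1)+1$, which gives a monochromatic $K_t^{(1)}$. Assume now that the $(r-1)$-uniform statement holds, and write $R_{r-1}(t;k)$ for a valid threshold there.

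The heart of the argument is a greedy construction that reduces an $r$-uniform coloring to an $(r-1)$-uniform one. Starting from the vertex set $Y_0$ of size $N$, I would build a sequence of vertices $x_1, x_2, \dots$ together with a nested family $Y_0 \supseteq Y_1 \supseteq \cdots$ as follows: having chosen $x_1,\dots,x_i$ and $Y_i$, pick any $x_{i+1}\in Y_i$, and then refine $Y_i\setminus\{x_{i+1}\}$ to a large subset $Y_{i+1}$ on which, for every $(r-1)$-subset $T\subseteq\{x_1,\dots,x_{i+1}\}$ containing $x_{i+1}$, the color of $T\cup\{z\}$ is constant as $z$ ranges over $Y_{i+1}$. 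Since there are at most $\binom{i}{r-2}$ such sets $T$, each contributing one of $k$ colors, partitioning $Y_i\setminus\{x_{i+1}\}$ according to the resulting color vector and keeping the largest class yields $|Y_{i+1}| \geq (|Y_i|-1)/k^{\binom{i}{r-2}}$. Iterating this bound shows that if $N$ is large enough as a function of $r$, $k$, and the target length, the construction runs for as many steps as we like.

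The construction guarantees that for any $(r-1)$-subset $T=\{x_{i_1},\dots,x_{i_{r-1}}\}$ with $i_1<\cdots<i_{r-1}$ and any later vertex $x_j$ with $j>i_{r-1}$ (so that $x_j\in Y_{i_{r-1}}$), the color of $T\cup\{x_j\}$ depends only on $T$; I denote it $\phi(T)$. Thus $\phi$ is a $k$-coloring of the $(r-1)$-subsets, and by building the sequence to length $R_{r-1}(t;k)+1$—so that $\phi$ is defined on every $(r-1)$-subset of the first $R_{r-1}(t;k)$ vertices, each of which has a genuine later vertex—I can apply the inductive hypothesis to obtain a set $W$ of $t$ vertices all of whose $(r-1)$-subsets receive the same $\phi$-color $c$. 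Finally, any $r$-subset $\{x_{j_1},\dots,x_{j_r}\}$ of $W$ with $j_1<\cdots<j_r$ has edge-color $\phi(\{x_{j_1},\dots,x_{j_{r-1}}\}) = c$, since its first $r-1$ vertices form an $(r-1)$-subset of $W$ and its last vertex is a legitimate later vertex. Hence $W$ induces a monochromatic $K_t^{(r)}$, completing the induction.

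I expect the main obstacle to lie in the bookkeeping of the greedy step: one must check that the single refinement performed when $x_{i+1}$ is introduced simultaneously pins down the color of $T\cup\{z\}$ for every relevant $T$ and every later $z$, and then confirm that this is precisely what makes $\phi$ well defined on all the $(r-1)$-subsets used in the final extraction. The quantitative side—unwinding the recursion $|Y_{i+1}|\geq(|Y_i|-1)/k^{\binom{i}{r-2}}$ into a clean explicit bound on $N$—is routine but tedious, and since only existence of a finite $N$ is claimed, I would not grind through the exact tower-type estimate.
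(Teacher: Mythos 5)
The paper gives no proof of this statement: it is Ramsey's theorem itself, quoted as the classical starting point with a citation to Ramsey's 1930 paper, so there is no internal argument to compare yours against. Your proposal is a correct proof, and it is the standard Erd\H{o}s--Rado-style induction on the uniformity $r$: pigeonhole for $r=1$; the greedy construction of a sequence $x_1,x_2,\dots$ with nested refinement sets $Y_0\supseteq Y_1\supseteq\cdots$; the induced $(r-1)$-uniform coloring $\phi$; and the final extraction. The bookkeeping you flag as the main obstacle does go through: at step $i+1$ only the $\binom{i}{r-2}$ new $(r-1)$-sets containing $x_{i+1}$ need to be split on, because for any $T$ whose largest index is $m\le i$ the constancy of the color of $T\cup\{z\}$ was already forced over $Y_m$ at step $m$ and is inherited by every later $Y_j\subseteq Y_m$; this same containment (each $x_j$ with $j>m$ lies in $Y_{j-1}\subseteq Y_m$) is what makes $\phi$ well defined and what guarantees that every $r$-subset of the final set $W$ receives the color $\phi$ of its first $r-1$ elements. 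Building the sequence to length $R_{r-1}(t;k)+1$ correctly ensures $\phi$ is total on the first $R_{r-1}(t;k)$ vertices, and carrying all $k$ colors through the pigeonhole steps (rather than first reducing to two colors) is perfectly legitimate. Omitting the explicit tower-type bound is also fine, since the statement only asserts that some finite $N$ exists.
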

Estimating the smallest value of such an integer $N$ (the so-called Ramsey number) is a notoriously difficult problem and only weak bounds are known.  Given the difficulty of this problem, many people began investigating variations of this problem where graphs other than the complete graphs are considered. An example of an early result in this direction due to Chv\'atal~\cite{TreeComplete} asserts that the Ramsey number of a $t$-clique versus any $m$-vertex tree is precisely $N=1 + (m-1)(t-1)$.  That is, any red-blue coloring of the complete graph $K_N$ yields a red $K_t$ or a blue copy of a given $m$-vertex tree. We now give the definition of the Ramsey number for general collections of hypergraphs.

\begin{definition}
Let $\h_1,\h_2,\dots,\h_k$ be nonempty collections of $r$-uniform hypergraphs. The Ramsey number $R^r_k(\h_1,\h_2,\dots,\h_k)$ is defined to be the minimum integer $N$ such that if the hyperedges of the complete $r$-uniform $N$-vertex hypergraph are colored with $k$ colors, then for some $1\le i \le k$, there is a monochromatic copy of a member of $\h_i$. If $k$ is clear by context, then we omit $k$ in this notation. If some of the collections $\h_i$ consist of a single hypergraph $\mathcal{G}$, then we write $\mathcal{G}$ in place of $\h_i = \{ \mathcal{G}\}$.
\end{definition}

Ramsey problems for a variety of hypergraphs and classes of hypergraphs have been considered (for a recent survey of such problems see~\cite{ramseysurvey}).  In this article, we will primarily be concerned with families of hypergraphs defined in a natural way from a given graph $G$ (or hypergraph $\h$).  In the case when $G$ is a path or a cycle, Berge~\cite{Berge} introduced a very general class of hypergraphs defined in terms of $G$.  In particular if $G=P_t$, the path with $t$ edges, then a Berge-$P_t$ is any hypergraph with $t$ hyperedges $e_1,e_2,\dots,e_t$ containing vertices $v_1,v_2,\dots,v_{t+1}$ such that $v_i,v_{i+1} \in e_i$ for all $1 \le i \le t$ (a Berge-cycle is defined analogously). 

The Ramsey problem for Berge-paths and cycles has received much attention.  Of particular interest is a result of Gy\'{a}rf\'{a}s and S\'{a}rk\"{o}zy~\cite{Gyafas_bergecycle} showing that the 3-color Ramsey number of a 3-uniform Berge-cycle of length $n$ is asymptotic to $\frac{5n}{4}$ (the 2-color case was settled exactly in~\cite{2colorcycles}).

The general definition of a Berge-$G$ for an arbitrary graph $G$ was introduced by Gerbner and Palmer in~\cite{gerbner2017extremal}.  Since their publication, the Tur\'an problem for Berge-$G$-free hypergraphs has been investigated heavily (see, for example~\cite{AnsteeBerge},~\cite{Tait}~and~\cite{thres}). Complete graphs were considered by several authors in~\cite{gyori},~\cite{iran},~\cite{newgyarfas},~\cite{generallemmas} and~\cite{gerbner}. However, the analogous Ramsey problem has not yet been investigated beyond the special cases of paths and cycles.

We will recall the definition of the set of Berge-copies of a graph $G$. In fact, we will give a more general definition in which rather than starting with a graph $G$ we may start with any uniform hypergraph.  

%\begin{definition}
%Let $G=(V,E)$ be a $k$-vertex graph.  Then $BG$ (the set of Berge-copies of G) is defined to be the set of hypergraphs $\h=(U,\E)$ such that there exist bijections $\phi:V\to U$, $\psi:E\to \E$ such that for all $e=\{v,u\}\in E$ $\{\phi(u),\phi(v)\} \subset \psi(e)$.  
%\end{definition}

%\begin{definition}
%Let $\h=(V,\E)$ be a $k$-vertex $s$-uniform hypergraph.  Then given an integer $r>s$, $B\h$ (the set of Berge-copies of $\h$) is defined to be 
%\todo{it still feels strange to me that we are using this notation but $U$ is not the vertex set of $\cH'$.
%See comment in document (note is too long) -ct}
%the set of $r$-uniform hypergraphs $\h'=(U,\F)$  such that there exists an injection $\phi:V\to U$ and bijection $\psi:\E \to \F$ such that for all $e=\{u_1,u_2,\dots,u_s\}\in \E$,  $\{\phi(u_1),\phi(u_2),\dots,\phi(u_s)\} \subset \psi(e)$.  In this case, we call $\phi(V)$ the \textit{core} of $\cH'$.
%\end{definition}
%Formally we talk about $\phi^{-1}(U)$ as the core and $\psi^{-1}(\E)$ as the hyperedges of $\h'$. Should we make a comment that we identify the elements of B\H with their isomorphic preimages under \phi and \psi? 

\begin{definition}
Let $\h=(V,\E)$ be a $k$-vertex $s$-uniform hypergraph.  Then given an integer $r \ge s$, $B\h$ (the set of Berge-copies of $\h$) is defined to be the set of $r$-uniform hypergraphs $\h'=(W,\F)$  such that there exist $U \subseteq W$ and bijections $\phi:V\to U$, $\psi:\E \to \F$ such that for all $e=\{u_1,u_2,\dots,u_s\}\in \E$,  $\{\phi(u_1),\phi(u_2),\dots,\phi(u_s)\} \subseteq \psi(e)$.  In this case, we call $U$ the \textit{core} of $\cH'$.
\end{definition}

\begin{remark}
For simplicity, we will often (when it cannot lead to confusion) say that a hypergraph is a $B\h$ to mean it is an element of $B\h$. For example we may, in a colored hypergraph, say that a certain hypergraph is a red $BK_t$, meaning that it is an element of the set $BK_t$ which is red.  Similar terminology will be used with respect to the other structures which we define later.  
\end{remark}

One of the main topics of the present paper is determining the Ramsey number of the set of Berge-copies of a hypergraph (mainly in the graph case). We show that the $2$-color Ramsey number of $BK_t$ versus $BK_s$ is linear. 
In particular, we prove the following theorem:

\begin{theorem}\label{berge-Ramsey}

\begin{displaymath}
R^3(BK_s, BK_t) = \begin{cases}
			t +s -1 &  \textrm{if $s=t=2$, $s=t=3$ or $\{s,t\}= \{2,3\}$ or $\{s,t\}= \{2,4\}$} ,\\
            t+s-2 & \textrm{if $s = 2$, $t\geq 5$, or $s=3$, $t\geq 4$ or $s=t=4$}, \\
            t+s-3 & \textrm{if $s \geq 4$ and $t\geq 5$.}
				  \end{cases}
\end{displaymath}            
\end{theorem}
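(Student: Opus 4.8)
The plan is to treat the two directions separately and to isolate the ``generic'' regime $s\ge 4,\ t\ge 5$ (answer $s+t-3$) from the finitely many boundary regimes, where the extremal colouring can be improved by one or two vertices. For the lower bounds I would exhibit explicit colourings; for the upper bounds I would assume a colouring of the complete $3$-uniform hypergraph on $n=s+t-3$ vertices with no red $BK_s$ and no blue $BK_t$, and derive a contradiction. Throughout I would use one elementary but decisive observation: if $U$ is a set of vertices such that every pair $\{x,y\}\subseteq U$ lies in a red hyperedge whose third vertex is \emph{outside} $U$, then these hyperedges are automatically distinct (each meets $U$ in exactly the pair it represents), so they furnish a red $BK_{|U|}$ with no further matching argument; the same holds for blue. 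This reduces ``finding a Berge clique'' to ``covering all pairs of a core from outside,'' sidestepping the system-of-distinct-representatives issue whenever the covering hyperedges can be kept external.

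\textbf{Lower bound.} For the generic regime I would partition the $s+t-4$ vertices as $V=A\sqcup B$ with $|A|=s-2$ and $|B|=t-2$, colouring a hyperedge $e$ red if $|e\cap A|\ge 2$ and blue otherwise (equivalently, blue if $|e\cap B|\ge 2$). No pair inside $B$ lies in a red hyperedge and no pair inside $A$ lies in a blue one, so a red core meets $B$ in at most one vertex and hence has size at most $|A|+1=s-1$, while a blue core has size at most $|B|+1=t-1$; thus neither a red $BK_s$ nor a blue $BK_t$ appears, giving $R^3(BK_s,BK_t)\ge s+t-3$. For the boundary regimes this idea degenerates (for instance $|A|=1$ when $s=3$), so there I would build tailored colourings on $s+t-3$ or $s+t-2$ vertices, exploiting that small cliques need relatively few hyperedges (a $BK_2$ needs a single hyperedge and a $BK_3$ a covered triangle), which leaves extra room.

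\textbf{Upper bound.} Assume $n=s+t-3$ and that the colouring has neither forbidden clique. Let $U$ be the core of a red Berge clique of maximum size $\rho\le s-1$, so $W=V\setminus U$ has $|W|=n-\rho\ge t-2$. I would analyse maximality through the observation above in contrapositive form: since $U$ cannot be enlarged, for each $v\in W$ the red hyperedges available to cover the new pairs $\{v,u\}$ ($u\in U$) from outside must be deficient, so a defect-Hall obstruction forces a controlled family of these hyperedges to be blue. Iterating this over $W$, I would show that the pairs spanned by $W$ (together with at most a couple of vertices of $U$) are all blue-coverable from outside, producing a blue core of size at least $t$ and the desired contradiction. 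The boundary cases I would dispatch by the same scheme with the thresholds shifted, or by a short direct or inductive argument on the few remaining parameter values.

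\textbf{Main obstacle.} The hard part is precisely the step where a covering hyperedge is forced to stay \emph{inside} the core, so that the automatic distinctness trick no longer applies and one must genuinely invoke a defect version of Hall's theorem on the bipartite incidence between the core pairs and the monochromatic hyperedges. Controlling these internal obstructions—showing that a Hall violation for red at the maximal clique $U$ frees up enough blue hyperedges on $W$ to cover a $t$-set from outside—is where the real work lies, and it is also what makes the exact constant ($-3$ versus $-2$ or $-1$) sensitive to the small values of $s$ and $t$, necessitating the separate boundary analyses.
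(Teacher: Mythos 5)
Your lower-bound constructions are fine and essentially identical to the paper's (the partition $V=A\sqcup B$ with ``red iff at least two vertices in $A$'', plus ad hoc colourings for the small cases), so that direction is not the issue. The gap is in the upper bound, and it is exactly the step you yourself flag as ``where the real work lies'': nothing in the proposal carries it out, and as stated it faces two concrete obstacles. First, maximality of a red Berge clique with core $U$ is a weaker hypothesis than your defect-Hall set-up assumes: the hyperedges realizing that clique may themselves contain $v$ and other vertices of $W$, so when you try to extend $U$ by $v$, the candidate red hyperedges through the pairs $\{v,u\}$ must also avoid the hyperedges already used, and a failure to extend therefore does not translate into ``few red hyperedges contain $\{v,u\}$'' but only into a deficiency relative to an unknown used set. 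The paper avoids precisely this by working inductively: it deletes $v$ first, finds a monochromatic $BK_{t-1}$ (resp.\ $BK_{s-1}$) inside $\h - v$, and only then runs Hall's theorem on the link of $v$, so that every candidate hyperedge through $v$ is automatically distinct from the clique's hyperedges; it also needs a further twist with no analogue in your outline (the ``moreover'' clause that if the low-degree vertex $u$ has a unique blue-link neighbour $w$, then $w$ itself has small blue link), which is what makes the red extension by $v$ go through.

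Second, even granting that for each $v\in W$ you obtain $u_v\in U$ such that almost all hyperedges $\{v,u_v,x\}$ are blue, this does not yet yield a blue $BK_t$: in the critical case $\rho=s-1$ you have $|W|=t-2$, so the intended blue core must contain two vertices of $U$, and then the hyperedges $\{v,u_v,x\}$ are simultaneously the only candidates for the pairs $\{v,x\}$, $\{v,u_v\}$ and $\{u_v,x\}$. Your automatic-distinctness observation no longer applies (the covering hyperedges are now internal to the intended core), the designated vertices $u_v$ may differ for different $v$, and you are back to a genuine system-of-distinct-representatives problem over all $\binom{t}{2}$ pairs --- the very thing the reduction was meant to sidestep. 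Finally, the boundary values are not dispatchable ``by the same scheme with thresholds shifted'': the paper needs a separate induction for $R^3(BK_4,BK_t)$ anchored at the computer-verified equalities $R^3(BK_3,BK_4)=5$ and $R^3(BK_4,BK_5)=6$, and the drop of the constant from $-2$ to $-3$ exactly at $s\ge 4$, $\max\{s,t\}\ge 5$ is encoded in those base cases. So what you have is a plausible plan whose decisive step is missing, not a proof.
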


For higher uniformity, we will show the following theorem.
\begin{theorem}\label{berge:4-uniform}
\begin{displaymath}
R^4(BK_t, BK_t) = \begin{cases}
			t +2 &  \textrm{if $2\leq t \leq 5$},\\
            t+1 & \textrm{Otherwise}.
				  \end{cases}
\end{displaymath}
\end{theorem}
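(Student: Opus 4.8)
The plan is to reduce the existence of a monochromatic $BK_t$ to a system-of-distinct-representatives (SDR) condition and then apply Hall's theorem. A $4$-uniform hypergraph $H$ on vertex set $W$ contains a $BK_t$ precisely when there is a \emph{core} $S\subseteq W$ with $|S|=t$ together with an injection $\psi\colon\binom{S}{2}\to E(H)$ into one color satisfying $p\subseteq\psi(p)$ for every pair $p$; equivalently, the bipartite containment graph whose parts are $\binom{S}{2}$ and the monochromatic $4$-sets, with $p$ joined to $e$ iff $p\subseteq e$, has a matching saturating $\binom{S}{2}$. Throughout I would test Hall's condition $|N(P)|\ge|P|$ on this bipartite graph. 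Two features of the range $n\in\{t,t+1\}$ drive everything: a core of size $t$ inside $n=t$ vertices must be all of $W$, while inside $n=t+1$ vertices a core omits exactly one vertex $w$, so there are only $t+1$ candidate cores $S_w=W\setminus\{w\}$; moreover every pair lies in exactly $\binom{n-2}{2}$ of the $4$-sets.

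For the lower bounds I would use a single family: fix two vertices $v_1,v_2$ and color a $4$-set red iff it contains $\{v_1,v_2\}$. On $n=t$ vertices the red class has $\binom{t-2}{2}<\binom{t}{2}$ edges, too few to realize any $BK_t$, while no blue $4$-set covers the pair $\{v_1,v_2\}$, which every core (here $W$ itself) must cover; hence no monochromatic $BK_t$, giving $R^4(BK_t,BK_t)\ge t+1$ for all $t\ge 4$. On $n=t+1$ vertices the same coloring has red class of size $\binom{t-1}{2}$ and blue class of size $\binom{t+1}{4}-\binom{t-1}{2}$; a direct computation shows both are smaller than $\binom{t}{2}$ exactly when $t\le 5$, so for $2\le t\le 5$ neither color can supply the $\binom{t}{2}$ distinct edges a $BK_t$ needs, giving $R^4(BK_t,BK_t)\ge t+2$ (the tiny cases $t=2,3$ being immediate from $\binom{t+1}{4}<\binom{t}{2}$). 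This cleanly explains the phase transition at $t=6$, where the blue class first becomes large enough to matter.

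The heart of the argument is the upper bound $R^4(BK_t,BK_t)\le t+1$ for $t\ge 6$. I would argue by contradiction: assume a $2$-coloring of $\binom{W}{4}$ with $|W|=t+1$ has no monochromatic $BK_t$, so for \emph{every} vertex $w$ and \emph{both} colors Hall's condition fails on the core $S_w$. Take the majority color, say red; for some core there is a maximal red Hall-violator $P$, and writing $A=\bigcup_{p\in P}p$, $a=|A|$, standard properties of maximal violators let me take $P=\binom{A}{2}$, so the red $4$-sets meeting $A$ in at least two vertices number fewer than $\binom{a}{2}$. Since the total number of $4$-sets meeting $A$ in at least two vertices equals $\binom{n}{4}-\binom{n-a}{4}-a\binom{n-a}{3}$, almost all of them must be blue, forcing a dense blue cluster on $A$. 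The plan is to show that such a cluster, together with blue's own Hall-failure on every core, is contradictory once $t\ge 6$: the blue edges concentrated on $A$ already satisfy Hall for a suitable core and produce a blue $BK_t$. The hard part will be making this synthesis quantitative and uniform across all $t+1$ cores simultaneously, and in particular pinning the threshold so the estimates close for $t\ge 6$ but (correctly) fail at $t=5$; this is where the bound $\binom{n-2}{2}<\binom{t}{2}$ interacts most delicately with the violator size $\binom{a}{2}$.

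Finally, the small cases $2\le t\le 5$ of the upper bound, namely $R^4(BK_t,BK_t)\le t+2$, I would settle within the same SDR/Hall framework on $n=t+2$ vertices, where the majority color carries at least $\tfrac12\binom{t+2}{4}$ edges; for these finitely many, genuinely small configurations one verifies directly (or by a short case analysis on the possible Hall-violators) that the majority color admits a saturating matching on some core. These checks, while routine, are exactly what separates the $t\le 5$ regime from the general argument and completes the evaluation of $R^4(BK_t,BK_t)$ in all cases.
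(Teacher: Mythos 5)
Your lower bounds are correct and are essentially the paper's: the two-special-vertices coloring (all $4$-sets containing $\{v_1,v_2\}$ in one color) is exactly the construction the paper uses for $t\ge 6$, and your edge count on $t+1$ vertices handles $2\le t\le 5$ (the paper gets the same conclusion from the pigeonhole inequality $\binom{n}{4}\ge 2\binom{t}{2}-1$). But the upper bound, which is the substance of the theorem, is not proved. For $t\ge 6$ you give only a plan, and you say yourself that ``the hard part will be making this synthesis quantitative''; that hard part is the entire content. In the paper this step takes several pages: a Hall-type claim producing, for each vertex, a couple $\{v,u\}$ whose pair degree in one color is at most $2$, then a delicate case analysis on two red couples with their ``bad pairs'' and an explicit three-phase embedding of all $\binom{t}{2}$ pairs into red $4$-sets, with the base case $R^4(BK_6,BK_6)\le 7$ verified by computer. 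Nothing in your sketch substitutes for this. Moreover, the one structural step you do assert is false: a maximal Hall violator $P$ in the pairs-versus-red-edges bipartite graph cannot in general be upgraded to $\binom{A}{2}$ with $A=\bigcup_{p\in P}p$. Deficiency maximizers are closed under union and intersection (by submodularity of $P\mapsto |N(P)|$), but adding a pair inside the support $A$ can enlarge the neighborhood by more than it enlarges $P$: take two disjoint pairs contained in no red $4$-set while every cross pair inside $A$ lies in many red $4$-sets; then $P$ is a violator but $\binom{A}{2}$ is not. So the counting you base on $\bigl|N\bigl(\binom{A}{2}\bigr)\bigr|<\binom{a}{2}$ has no foundation.

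The small-case upper bounds are also not routine. Having at least $\tfrac12\binom{t+2}{4}\ge\binom{t}{2}$ edges in the majority color says nothing about a saturating matching on some core; indeed your own lower-bound construction for $t\ge 6$ is precisely a coloring in which one color has far more than $\binom{t}{2}$ edges and still admits no $BK_t$. The paper settles $R^4(BK_t,BK_t)\le t+2$ for $2\le t\le 5$ and the base case $R^4(BK_6,BK_6)\le 7$ by computer search; a human-readable treatment would require an actual case analysis that you have not supplied, and the authors' recourse to computation suggests it is lengthy. As it stands, your proposal establishes the lower bounds only.
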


Moreover, for general uniformity $k$ we prove
\begin{theorem}\label{berge:5-uniform}
For $k\geq 5$ and $t \ge t_0(k)$ (for $k=5$, $t_0 =23$ suffices),
\begin{displaymath}
R^k(BK_t, BK_t) = t.
\end{displaymath}
\end{theorem}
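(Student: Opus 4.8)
The lower bound is immediate: a member of $BK_t$ has a core of $t$ distinct vertices, so the complete $k$-uniform hypergraph on $t-1$ vertices contains no $BK_t$ at all, whence $R^k(BK_t,BK_t)\ge t$. The whole content is the upper bound: in every red/blue coloring of $K_t^{(k)}$ there is a monochromatic $BK_t$, and since only $t$ vertices are available the core must be all of $[t]$. Thus it suffices to find a color $c$ together with an injection $\psi$ from $\binom{[t]}{2}$ into the $c$-colored $k$-edges with $\{i,j\}\subseteq \psi(\{i,j\})$ for every pair. This is exactly a left-perfect matching (saturating the pairs) in the bipartite containment graph whose parts are the pairs $\binom{[t]}{2}$ and the $c$-colored $k$-edges, so the plan is to verify Hall's condition for one of the two colors.

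First I would record a structural dichotomy that uses $k\ge 4$. Call a pair \emph{red-coverable} if it lies in at least one red edge, and similarly \emph{blue-coverable}. If some pair $p$ were in no red edge while some pair $q$ were in no blue edge, then, since $|p\cup q|\le 4\le k$ and $t\ge k$, one could pick a $k$-edge $e\supseteq p\cup q$; it would have to be blue (as $p\subseteq e$) and red (as $q\subseteq e$), a contradiction. Hence either every pair is red-coverable or every pair is blue-coverable, and we may assume the former.

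Next comes the quantitative heart, where $k\ge 5$ enters. Writing $r(p)$ and $b(p)$ for the numbers of red and blue edges containing a pair $p$, we have $r(p)+b(p)=\binom{t-2}{k-2}$, and for $k\ge 5$ this total is $\gg \binom{t}{2}$; the role of the threshold $t\ge t_0(k)$ (with $t_0(5)=23$) is to make such inequalities effective. The clean case is when every pair satisfies $r(p)\ge \binom{t}{2}$: then a greedy assignment succeeds, since when we process any pair fewer than $\binom{t}{2}$ red edges have been used, so an unused red edge through that pair remains, producing a red $BK_t$; the symmetric statement holds for blue. One is therefore done unless both colors have a \emph{deficient} pair, i.e. a pair with $r(p)<\binom{t}{2}$ and a (necessarily different) pair with $b(p)<\binom{t}{2}$.

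The main obstacle is precisely this mixed regime, and it is where the proof — and the exact value of $t_0$ — really lives. Here I would abandon the crude min-degree bound and instead analyze a minimal Hall violator $P\subseteq \binom{[t]}{2}$ for red, set $W=\bigcup P$, and exploit that in a minimal violator every pair of $P$ lies in fewer than $|P|\le \binom{|W|}{2}$ red edges, hence in very many blue edges clustered on $W$. Applying the same analysis to a putative blue violator $P'$ on a set $W'$, one finds that a red failure forces almost all $k$-edges meeting $W$ in at least two vertices to be blue, while a blue failure forces the analogous edges around $W'$ to be red; for $k\ge 5$ there is always a $k$-edge witnessing both demands (two of its vertices in $W$ and two in $W'$), a contradiction. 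Making this dichotomy quantitatively tight — controlling the sizes of $W,W'$ and the overlap among the few covering edges of the deficient pairs, and checking that the counting survives all the way down to $t=t_0(k)$ — is the delicate step, and it is exactly what fails for $k=4$, where $\binom{t-2}{k-2}\approx \binom{t}{2}$ rather than $\gg \binom{t}{2}$, consistently with $R^4(BK_t,BK_t)=t+1$.
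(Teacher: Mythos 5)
Your reduction (the core must be all of $[t]$, so it suffices to produce a system of distinct representatives matching each pair in $\binom{[t]}{2}$ to a monochromatic edge containing it) and your greedy handling of the clean case are fine, but the mixed regime --- which you yourself identify as the whole content of the theorem --- is not resolved, and the sketch you give for it fails quantitatively. A minimal Hall violator $P$ for red only tells you that each pair $p\in P$ lies in fewer than $|P|$ red edges; it does not force any particular $k$-edge meeting $W=\bigcup P$ in two vertices to be blue, only that \emph{most} edges through each $p\in P$ are blue. To get your contradiction you need one edge containing some $p\in P$ and some $q\in P'$ whose color is forced both ways, and counting shows this requires $\binom{t-4}{k-4} > (|P|-1)+(|P'|-1)$: only $\binom{t-4}{k-4}$ edges contain $p\cup q$, while up to $|P|-1$ of them may be red and up to $|P'|-1$ may be blue. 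You have no bound on $|P|,|P'|$ beyond the trivial $\binom{t}{2}$, and for $k=5$ one has $\binom{t-4}{k-4}=t-4\ll\binom{t}{2}$ (and $\binom{t-4}{2}<2\binom{t}{2}$ for $k=6$), so the inequality fails in precisely the uniformities the theorem is about; your argument could only close for $k\ge 7$ with $t$ large, and certainly does not yield $t_0(5)=23$.

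The missing idea (which is how the paper proceeds) is that Hall's condition only needs the \emph{constant} threshold $\binom{k}{2}$, not $\binom{t}{2}$: each $k$-edge covers at most $\binom{k}{2}$ pairs, so if every pair lies in at least $\binom{k}{2}$ red edges, Hall's theorem already yields a red $BK_t$. Otherwise some single pair $\{u,v\}$ lies in at most $\binom{k}{2}-1$ red edges, i.e.\ it has constant deficiency, and one then restricts the entire blue embedding to the star of blue edges containing both $u$ and $v$: for any pair $\{x,y\}$ there are at least $\binom{t-4}{k-4}$ edges containing $\{u,v,x,y\}$, of which at most $\binom{k}{2}-1$ can be red, so within this star every pair still has blue degree at least $\binom{t-4}{k-4}-\binom{k}{2}+1$. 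Once $\binom{t-4}{k-4}\ge 2\binom{k}{2}-1$ this is at least $\binom{k}{2}$ and Hall applies again, giving a blue $BK_t$; for $k=5$ the inequality reads $t-4\ge 19$, which is exactly the stated $t_0=23$. Your remark about $k=4$ is consistent with this picture, but for the right reason one should note that there $\binom{t-4}{k-4}=1$, so the star argument collapses entirely.
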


\begin{remark}
We remark that a similar direction (but with mostly non-overlapping results) has been pursued by two other groups independently~\cite{axenovich,othergroup}. In particular,~\cite{axenovich} is primarily concerned with non-uniform hypergraphs whereas we focus solely on the uniform case.  
\end{remark}

In addition to Berge-hypergraphs, we consider a variety of related structures.  First, we discuss a more restrictive class of hypergraphs defined from a given hypergraph $\h$.

%\begin{definition}
%Let $\h = (V,\E)$ be a $k$-vertex hypergraph and $S \subset V$.  The trace of $\h$ on $S$, denoted $\Tr(\h,S)$, is the hypergraph with vertex set $S$ and hyperedge set $\{h \cap S:h\in E\}$.
%Let $G$ be a $k$-vertex graph, let $\h$ be an $n \ge k$ vertex hypergraph and let $S$ a $k$-element subset of the vertex set of $H$.  Then $T G$ is defined to be the set of hypergraphs $\{\h:\Tr(\h,S) = G\}$. 
%\end{definition}

\begin{definition}
Let $\h = (V,\E)$ be a $k$-vertex $s$-uniform hypergraph and let $S \subset V$.  The trace of $\h$ on $S$, denoted $\Tr(\h,S)$, is the hypergraph with vertex set $S$ and hyperedge set $\{h \cap S:h\in \E\}$.
Then, given $r \ge s$, $T \h$ is defined to be the set of $r$-uniform hypergraphs $\{\h':\Tr(\h',V(\h)) = \h\}$. 
For each such element $\h' \in T\h$, we refer to $V(\h)$ as the core of $\h'$.
\end{definition}

This notion originates from the idea of shattering sets and the Sauer-Shelah lemma~\cite{SS1,SS2,SS3}. This lemma provides an upper bound on the size of an $n$-vertex (non-uniform) hypergraph avoiding $\Tr(\h,S) = 2^S$ for all $k$-vertex sets $S$. Frankl and Pach~\cite{FranklPach} investigated the same problem with the restriction that the hypergraph is $r$-uniform. In the case when $\h$ is a (graph) cycle, $T\h$ was studied under the name weak $\beta$-cycle~\cite{weakbeta}. In the case of complete graphs, bounds were obtained by Mubayi and Zhao in~\cite{MubayiZhao}.  For a survey on extremal problems for traces see~\cite{tracesurvey}.

We now turn our attention to an even more restrictive notion called the expansion of a hypergraph.

%\begin{definition}
%Let $G=(V,E)$ be a graph.  The $r$-expansion $HG$ is defined to be the hypergraph formed by adding $r-2$ distinct new vertices to every edge in $G$.  Precisely, for each edge $e \in E$, let $U_e = \{u_{e,1},u_{e,2},\dots,u_{e,r-2}\}$, and define $HG = (V \cup (\cup_{e \in E} U_e),\E)$ where $\E = \{e \cup U_e:e\in E\}$.
%\end{definition}

\begin{definition}
Let $\h=(V,\E)$ be an $s$-uniform hypergraph.  The $r$-expansion $H \h$, for $r\ge s$, is defined to be the $r$-uniform hypergraph formed by adding $r-s$ distinct new vertices to every hyperedge in $\h$.  Precisely, for each hyperedge $e \in \E$, let $U_e = \{u_{e,1},u_{e,2},\dots,u_{e,r-s}\}$, and define $H \h = (V \cup (\cup_{e \in \E} U_e),\F)$ where $\F = \{e \cup U_e:e\in E\}$. We call $V$ the core of $\h$ and $V(\h) \setminus V$, the set of expansion vertices.  
\end{definition}
If $\h$ is a cycle we recover the well-known notion of linear cycle.  Ramsey and Tur\'an problems for linear cycles have been investigated intensely (see, for example~\cite{lincyc}).  The Tur\'an problem when $\h$ is a complete graph was investigated in~\cite{mubayi} and~\cite{Pik}. See~\cite{expansions} for a detailed survey of Tur\'an problems on expansions. In this article, we investigate the $2$-color Ramsey number of the 3-expansion of complete graphs $K_t$. By definition, a $3$-expansion of complete $K_t$ has $\binom{t}{2}+t$ vertices. Thus $R^3(HK_{t}, HK_{t}) \geq \binom{t}{2}+t$. We prove in the following theorem yielding a cubic upper bound on $R^3(HK_{t}, HK_{s})$.
%changed 'note' to article since I think our paper is too long to call 'note'. -ct
\begin{theorem}\label{th:expansion-upper}
For $t, s\geq 2$, we have
\begin{displaymath}
R^3(HK_{t}, HK_{s}) \leq 2st(s+t).
\end{displaymath}
\end{theorem}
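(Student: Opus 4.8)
The plan is to reduce the problem to finding a codegree-rich monochromatic clique and then to produce such a clique by a local counting argument that sidesteps the exponential blow-up of the naive reduction to the graph Ramsey number $R(t,s)$.

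First I would record the reduction. Call a pair $\{u,v\}$ \emph{red-rich} if it lies in at least $\tau_t := \binom{t}{2}+t$ red hyperedges and \emph{blue-rich} if it lies in at least $\tau_s := \binom{s}{2}+s$ blue hyperedges. If some $t$-set $I$ has all of its pairs red-rich, then a red $HK_t$ can be built greedily: process the $\binom{t}{2}$ pairs of $I$ one at a time, and for each pair choose a red apex outside $I$ distinct from the apexes already used; since each pair has at least $\tau_t$ red apexes and we must avoid at most $(t-2)+(\binom{t}{2}-1)$ vertices, a fresh apex always exists. Hence it suffices to find a red-rich $K_t$ or a blue-rich $K_s$. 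Now call a pair \emph{very red} if it lies in fewer than $\tau_s$ blue hyperedges and \emph{very blue} if it lies in fewer than $\tau_t$ red hyperedges, and let $A$ and $B$ be the graphs of very red and very blue pairs. Since $N-2\ge \tau_s+\tau_t$, the graphs $A$ and $B$ are edge-disjoint, an independent $t$-set in $B$ is a red-rich $K_t$, and an independent $s$-set in $A$ is a blue-rich $K_s$.

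The heart of the argument is a local dichotomy at each vertex. Fix $a$ and let $R_a$ and $B_a$ be its neighborhoods in $A$ and $B$. For $b\in R_a$ and $c\in B_a$, consider the single hyperedge $\{a,b,c\}$: it is forced red by $\{a,b\}$ unless $c$ is one of the fewer than $\tau_s$ third vertices making $\{a,b\}$ blue, and forced blue by $\{a,c\}$ unless $b$ is one of the fewer than $\tau_t$ third vertices making $\{a,c\}$ red. As it cannot be both colors, every pair $(b,c)\in R_a\times B_a$ is charged to one of these exception sets, giving
\[
|R_a|\,|B_a| \le \tau_s|R_a| + \tau_t|B_a|.
\]
Consequently $|R_a|\le 2\tau_t$ or $|B_a|\le 2\tau_s$, so each vertex is \emph{red-stable} (few very-blue partners) or \emph{blue-stable} (few very-red partners).

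Finally I would finish by pigeonhole and a greedy independent set. At least $N/2$ vertices share one stability type; say $U$ is a set of $N/2$ red-stable vertices. Then $B$ restricted to $U$ has maximum degree at most $2\tau_s$, so it contains an independent set of size at least $\tfrac{N/2}{2\tau_s+1}$, which exceeds $t$ precisely because $N=2st(s+t)\ge 2t(2\tau_s+1)$; this yields a red-rich $K_t$, and hence a red $HK_t$. The blue-stable case is symmetric and uses $N\ge 2s(2\tau_t+1)$, which again follows from $N=2st(s+t)$ for $s,t\ge 2$. The main obstacle, and the step I expect to require the most care, is the local counting lemma of the previous paragraph: it is exactly what replaces the naive (and exponential) appeal to $R(t,s)$, and tuning the thresholds $\tau_t,\tau_s$ together with the exception bookkeeping is what converts the bound into the stated cubic-type quantity rather than an exponential one.
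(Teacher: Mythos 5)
Your proof is correct, and it takes a genuinely different route from the paper's. The paper proceeds by induction on $s+t$: it proves the recursion $R^3(HK_{t+1},HK_{s+1}) \le \max\{R^3(HK_{t+1},HK_s),\,R^3(HK_{t},HK_{s+1})\}+2st$ by deleting a buffer set $W$ of $2st$ vertices, finding a monochromatic expansion in the remainder, using Hall's theorem either to absorb a vertex of $W$ into its core or to extract, for each $v\in W$, a core vertex whose triples through $v$ are almost all of the opposite color, then pigeonholing to find one core vertex serving $s$ vertices of $W$ and using those reserved triples to extend the other-colored expansion; the theorem follows from the recursion with base case $R^3(HK_2,HK_s)\le 4s^2+8s$. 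You instead give a direct, non-inductive argument: codegree thresholds $\tau_t,\tau_s$ classify pairs, no pair can be deficient in both colors (so the deficiency graphs $A$ and $B$ are edge-disjoint), a local charging of each triple $\{a,b,c\}$ with $b\in R_a$, $c\in B_a$ yields the dichotomy $|R_a|\le 2\tau_t$ or $|B_a|\le 2\tau_s$ at every vertex, and pigeonhole plus a greedy bounded-degree independent set plus a greedy apex assignment finish the job. I verified the quantitative details: the dichotomy holds since $|R_a|>2\tau_t$ and $|B_a|>2\tau_s$ would force $\tau_s|R_a|+\tau_t|B_a|<|R_a||B_a|$; edge-disjointness needs $N-2\ge\tau_s+\tau_t$, which holds; the independent-set step needs $st(s+t)\ge t\left(s^2+s+1\right)$ and $st(s+t)\ge s\left(t^2+t+1\right)$, i.e.\ $st\ge s+1$ and $st\ge t+1$, both true for $s,t\ge2$; and the apex-greedy works because $\tau_t=\binom{t}{2}+t$ exceeds $(t-2)+\bigl(\binom{t}{2}-1\bigr)$. (Your colors are swapped relative to the paper's convention, which is immaterial.) Your approach is essentially the codegree/deficiency method of Conlon, Fox and R\"odl for hedgehogs---which the paper cites as giving $4t^3$ in the diagonal case---carried out in the off-diagonal setting with the exact constant $2st(s+t)$; it buys a self-contained proof with all quantities explicit and local, whereas the paper's recursion buys uniformity with the Hall-theorem extension template used throughout the rest of the paper and isolates the per-step cost $2st$.
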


\begin{remark}
Suppose $t \ge s$, as a lower bound we can take a blue clique on $t + \binom{t}{2} -1$ vertices.  However, there is still a gap in the order of magnitudes of quadratic versus cubic.  
\end{remark}

In \cite{conlon2015hedgehogs}, Conlon, Fox and R{\"o}dl proved the same bound for diagonal Ramsey numbers. They showed that $R^3(HK_{t}, HK_{t}) \leq 4t^3$. This bound was latter improved by Fox and Li in~\cite{fox2019ramsey} where it was shown that $R^3(HK_{t}, HK_{t}) = O(t^2 \ln t)$.

Next we consider another way a hypergraph can be defined from another arbitrary hypergraph called a suspension~\cite{suspension} (or earlier enlargement~\cite{enlargement}).
Conlon, Fox and Sudakov considered the Ramsey numbers of the 3-suspension of a graph versus a 3-uniform clique in a short section of \cite{conlon2010hypergraph}. 

\begin{definition}
Let $\h=(V,\E)$ be an $s$-uniform hypergraph.  The $r$-suspension $S\h$, for $r \ge s$, is defined to be the hypergraph formed by adding a single fixed set of $r-s$ distinct new vertices to every edge in $\h$.  Precisely, let $U = \{u_1,u_2,\dots,u_{r-s}\}$, and define $S \h = (V \cup U, \F)$ where $\F = \{e \cup U:e \in E\}$. We call $V$ the core of $S\h$ and $U$ the set of suspension vertices.
\end{definition}

For suspensions of hypergraphs, we are only able to obtain Ramsey-type bounds using standard Ramsey number techniques. In particular, we show the following.
\begin{theorem}\label{th:suspension}
For $r\geq 3$, we have 
$$(1+o(1))\frac{\sqrt{2}}{e} t \sqrt{2}^t < R^r(SK_t, SK_t) \leq R^2(K_t, K_t)+ (r-2).$$
\end{theorem}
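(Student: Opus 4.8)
The plan is to prove the two bounds by completely different means: the upper bound is an immediate reduction to the ordinary graph Ramsey number, whereas the lower bound requires the (symmetric) Lov\'asz Local Lemma applied to a uniformly random $2$-coloring of the complete $r$-uniform hypergraph. For the upper bound, set $N = R^2(K_t,K_t)+(r-2)$ and fix any $2$-coloring of the hyperedges of the complete $r$-uniform hypergraph on $N$ vertices. I would single out an arbitrary set $U$ of $r-2$ vertices to serve as suspension vertices, and on the remaining $N-(r-2)=R^2(K_t,K_t)$ vertices define an auxiliary $2$-coloring of pairs by giving $\{x,y\}$ the color of the hyperedge $\{x,y\}\cup U$. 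By the definition of $R^2(K_t,K_t)$ this auxiliary graph coloring has a monochromatic $K_t$ on some vertex set $W$; by construction every hyperedge $\{x,y\}\cup U$ with $x,y\in W$ then receives that same color, which is exactly a monochromatic $SK_t$ with core $W$ and suspension set $U$.

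For the lower bound I would color each $r$-edge red or blue independently and uniformly at random, and for every disjoint pair $(U,W)$ with $|U|=r-2$ and $|W|=t$ let $A_{U,W}$ be the bad event that all $\binom{t}{2}$ hyperedges $\{x,y\}\cup U$ with $x,y\in W$ receive a single color. Each such event has probability $p=2^{1-\binom{t}{2}}$. The crucial step is to bound the dependency degree: two events $A_{U,W}$ and $A_{U',W'}$ are independent unless the two suspensions share a hyperedge, and each of the $\binom{t}{2}$ hyperedges of a fixed suspension can be split into a core pair and an $(r-2)$-element suspension set in at most $\binom{r}{2}$ ways and then completed to a core of size $t$ in at most $\binom{n}{t-2}$ ways. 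Hence the dependency degree obeys $D\le \binom{t}{2}\binom{r}{2}\binom{n}{t-2}$, and the symmetric Local Lemma produces a coloring in which no $A_{U,W}$ occurs (so $R^r(SK_t,SK_t)>n$) as soon as $e\,p\,(D+1)\le 1$, that is, roughly when $\binom{n}{t-2}\le 2^{\binom{t}{2}-1}\big/\big(e\binom{t}{2}\binom{r}{2}\big)$.

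Solving this inequality for $n$ is where the constant $\tfrac{\sqrt2}{e}$ appears. Taking $(t-2)$-th roots, the factors $e$, $\binom{t}{2}$ and $\binom{r}{2}$ each contribute only a $1+o(1)$ factor, Stirling's formula gives $\big((t-2)!\big)^{1/(t-2)}=(1+o(1))\,t/e$, and the exponent of $2$ simplifies exactly, since $\binom{t}{2}-1=\tfrac12(t-2)(t+1)$ yields $2^{(\binom{t}{2}-1)/(t-2)}=2^{(t+1)/2}=\sqrt2\cdot 2^{t/2}$. Therefore $n$ may be taken as large as $(1+o(1))\frac{\sqrt2}{e}\,t\,2^{t/2}$, which is precisely the claimed bound. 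I emphasize that the uniformity enters only through the factor $\binom{r}{2}^{1/(t-2)}\to 1$, so for every fixed $r\ge 3$ the leading term is the same, matching the classical local-lemma lower bound for $R^2(K_t,K_t)$.

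The main obstacle I expect is the bookkeeping in the dependency-degree estimate rather than the optimization: one must correctly account for the $\binom{r}{2}$ ways a shared hyperedge can be partitioned into a suspension part and a core pair, and verify that imposing the disjointness condition $U\cap W=\emptyset$ only shrinks the relevant counts (so the upper bound $D\le\binom{t}{2}\binom{r}{2}\binom{n}{t-2}$ remains valid). The secondary, but essential, point is to confirm that all of these polynomial-in-$t$ and polynomial-in-$r$ factors are washed out by the $(t-2)$-th root, so that the exact constant $\tfrac{\sqrt2}{e}$ survives; the clean factorization $\binom{t}{2}-1=\tfrac12(t-2)(t+1)$ is what makes this constant come out on the nose.
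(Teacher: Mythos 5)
Your proposal is correct and follows essentially the same route as the paper: the identical reduction of the upper bound to $R^2(K_t,K_t)$ by fixing $r-2$ suspension vertices, and the identical symmetric Lov\'asz Local Lemma argument with bad events $A_{U,W}$, probability $p=2^{1-\binom{t}{2}}$, and dependency degree at most $\binom{t}{2}\binom{r}{2}\binom{n}{t-2}$. The only difference is that you carry out the asymptotic extraction of the constant $\frac{\sqrt{2}}{e}$ explicitly (via Stirling and the factorization $\binom{t}{2}-1=\frac{1}{2}(t-2)(t+1)$), which the paper states only as a remark; your computation is correct.
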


Finally, we discuss a a class of hypergraphs defined from a graph which is larger than the class defined by a Berge-hypergraph.

\begin{definition}
The $2$-shadow of a hypergraph $\h = (V,\E)$, denoted $\partial_2(\h)$, is the graph $G = (V,E)$ where $E = \{\{x,y\}:\{x,y\}\subseteq e\in \E\}$. Given a graph $G = (V,E)$, define $\partial G$ to be the set of hypergraphs $\{\h: E(G) \subseteq \partial_2(\h)\}$.
\end{definition}

In~\cite{mubayi}, Mubayi determined the Tur\'an number of $\partial K_t$ in all uniformities. In this paper, we prove the following. 

\begin{theorem}\label{thm:2-shadow}
We have
\begin{enumerate}[label=\rm{(\arabic*)}]
\item $R^3(\partial K_2, \partial K_2) = 3$.
\item $R^3(\partial K_2, \partial K_s) = s$ for $s\geq 3$.
\item $R^3(\partial K_t, \partial K_s) = t+s-3$ for $s,t\geq 3$.
\item $R^r(\partial K_t, \partial K_s) = \max\{s,t\}$ for $r \ge 4$ and $s,t \ge r$.
\end{enumerate}
\end{theorem}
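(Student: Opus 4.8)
The plan is to reformulate the entire theorem in terms of two auxiliary graphs. Given a $2$-coloring of $K_n^{(r)}$, let $R$ be the \emph{red cover graph} on $[n]$, in which $\{x,y\}$ is an edge exactly when some red hyperedge contains both $x$ and $y$, and define the blue cover graph $B$ analogously. Once $n \ge r$ every pair lies in at least one hyperedge, so $E(R) \cup E(B) = E(K_n)$, although $R$ and $B$ may share edges. The key observation is that $\partial K_t$ is closed under adding hyperedges, so the red hypergraph contains a copy of a member of $\partial K_t$ if and only if its $2$-shadow, which is precisely $R$, contains $K_t$; that is, a red $\partial K_t$ exists iff $\omega(R) \ge t$, and a blue $\partial K_s$ exists iff $\omega(B) \ge s$. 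The theorem thus becomes a statement about which clique numbers the pair $(R,B)$ can realize. Parts (1) and (2) then fall out immediately: for (2), with $n=s$ either some red hyperedge exists (so $\omega(R) \ge 2$) or every hyperedge is blue (so $B = K_s$), while an all-blue coloring on $s-1$ vertices is the matching lower bound, and (1) is the degenerate case.

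For part (4), with $r \ge 4$ and (say) $t \ge s$, I would prove the upper bound at $n=t$ by a covering argument. If there is no red $\partial K_t$ then $\omega(R) < t = n$, so some pair $\{x,y\}$ is not red-covered, meaning every hyperedge containing $\{x,y\}$ is blue. Because $r \ge 4$ and $t \ge r$, the blue hyperedges $\{x,y\} \cup T$, as $T$ ranges over the $(r-2)$-subsets of the remaining $t-2$ vertices, already cover \emph{every} pair of $[t]$ — including pairs $\{z,w\}$ disjoint from $\{x,y\}$, which is exactly the step requiring $r-2 \ge 2$. Hence $B = K_t$ and $\omega(B) = t \ge s$, giving a blue $\partial K_s$. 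The lower bound is trivial: on $\max\{s,t\}-1$ vertices, coloring every hyperedge in the color corresponding to $\max\{s,t\}$ kills the other color entirely and leaves too few vertices for the target of that color.

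The heart of the argument is part (3), the case $r=3$, $s,t \ge 3$. For the lower bound at $n = t+s-4$ I would split the vertices into $A \sqcup C$ with $|A| = t-2$, $|C| = s-2$, and color a triple red if it has at least two vertices in $A$ and blue otherwise. A short check gives $R = K_n \setminus K_C$ and $B = K_n \setminus K_A$, so $\omega(R) = |A|+1 = t-1$ and $\omega(B) = |C|+1 = s-1$ (the boundary cases $\min\{s,t\} = 3$ degenerate to a monochromatic coloring, which already works). For the upper bound I would assume $\omega(R) \le t-1$ and $\omega(B) \le s-1$ and derive a contradiction at $n=t+s-3$ by analyzing the triple coloring. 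Classify each pair as \emph{red-only} (all triples through it red), \emph{blue-only}, or \emph{both}. The crucial local fact is that if $\{x,y\}$ is red-only then for every $z$ the triple $\{x,y,z\}$ is red, witnessing that no edge at $x$ or $y$ is blue-only; consequently the vertices partition into $V_R$ (incident to a red-only edge, hence to no blue-only edge), $V_B$ (symmetrically), and $V_P$ (all incident edges of type both). Then every edge crossing $V_R$–$V_B$ or touching $V_P$ is of type both, so $V_R \cup V_P$ is a clique in $R$, $V_B \cup V_P$ is a clique in $B$, and moreover adjoining any vertex of $V_B$ to $V_R \cup V_P$ keeps a clique in $R$. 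Feeding these into $\omega(R) \le t-1$ and $\omega(B) \le s-1$ and summing yields $n \le t+s-4$ when $V_R,V_B$ are both nonempty, whereas $V_R = \emptyset$ or $V_B = \emptyset$ forces $B = K_n$ or $R = K_n$ and hence a clique of size $n \ge \max\{s,t\}$; either way $n = t+s-3$ is contradicted.

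I expect the main obstacle to be precisely this $r=3$ upper bound. Unlike the $r \ge 4$ case, a single non-red-covered pair does \emph{not} propagate coverage to all pairs, so the crude covering trick fails and one genuinely needs the red-only/blue-only/both trichotomy together with the fact that a ``both''-edge lies in the clique of \emph{either} color. The delicate bookkeeping on which the whole bound rests is verifying that every edge incident to $V_P$ or crossing $V_R$–$V_B$ is of type both, so that $V_R \cup V_P$ and $V_B \cup V_P$ are genuine cliques in the respective cover graphs and the clique inequalities can be summed.
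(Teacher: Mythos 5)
Your proposal is correct, and for the heart of the theorem --- the upper bound in part (3) --- it takes a genuinely different route from the paper. The paper proves $R^3(\partial K_t,\partial K_s)\le t+s-3$ by induction on $s+t$: either every pair is blue-covered (giving a blue $K_{t}$ in the shadow), or some pair $\{x,y\}$ has all triples through it red; one then deletes $x$, applies induction to get a red $\partial K_{s-1}$, and re-attaches $x$ to its core using the red triples $\{x,y,\cdot\}$. Your argument is instead direct and non-inductive: you partition the vertices into $V_R$ (incident to a red-only pair), $V_B$ (incident to a blue-only pair) and $V_P$ (all incident pairs doubly covered), verify that cross pairs and pairs touching $V_P$ are doubly covered, and sum the two clique inequalities $\abs{V_R}+\abs{V_P}+1\le t-1$ and $\abs{V_B}+\abs{V_P}+1\le s-1$ to force $n\le t+s-4$, handling $V_R=\emptyset$ or $V_B=\emptyset$ separately. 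Both proofs are sound; the paper's induction is shorter and reuses the vertex-extension technique that runs through its Berge-clique arguments, while your partition argument is self-contained, yields the slightly stronger inequality $n+\abs{V_P}\le t+s-4$, and explains structurally why the extremal coloring must look like the two-class construction (namely $V_P=\emptyset$ with $V_R$, $V_B$ playing the roles of the classes $A$ and $C$). Your treatments of parts (1), (2), (4) and of the lower bound in (3) coincide with the paper's (the part (4) covering trick, using $r\ge 4$ so that a single uncovered pair forces the opposite cover graph to be complete, is exactly the paper's proof, and your lower-bound coloring for (3) is the paper's construction with the colors relabeled).
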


\begin{remark} 
Observe that for any graph $G$, we have $\{HG,SG\} \subset TG \subset BG \subset \partial G$.  
\end{remark}

\noindent\textbf{Organization} 
The organization of the paper is as follows: In Section~\ref{sc:Berge}, we give the proof of Theorems~\ref{berge-Ramsey},~\ref{berge:4-uniform} and \ref{berge:5-uniform}. In Section~\ref{2shadow}, we give the proof of Theorem~\ref{thm:2-shadow}.  In Section~\ref{sc:trace}, we show some results on the Ramsey number of trace-cliques. In Section~\ref{sc:exp-susp}, we give the proof of Theorems~\ref{th:expansion-upper} and~\ref{th:suspension}.

\vspace{0.5cm}

\section{Ramsey number of Berge-hypergraphs}\label{sc:Berge}

To avoid tedious case analysis, some of the small cases are verified by computer. The code is available at \url{https://github.com/wzy3210/berge_Ramsey}. We list below the results verified by the computer.

\begin{proposition}\label{prop:comp} 
We have
\begin{enumerate}[label=\rm{(\arabic*)}]
    \item $R^3(BK_3, BK_4)= 5$.
    \item $R^3(BK_4, BK_5) = 6$.
    \item $R^4(BK_t,BK_t) \leq t+2$ for $2\leq t\leq 5$.
    \item $R^4(BK_6,BK_6) \leq 7$.
\end{enumerate}
\end{proposition}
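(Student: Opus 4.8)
The plan is to handle the two equalities (items (1) and (2)) and the two pure upper bounds (items (3) and (4)) within a single computational framework, supplemented by short counting arguments supplying the lower bounds that the equalities require.

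\textbf{Lower bounds for (1) and (2).} Since a Berge-$K_m$ carries a bijection from $E(K_m)$ onto its hyperedge set, every Berge-$K_m$ uses exactly $\binom m2$ distinct hyperedges. For item (1) this alone gives $R^3(BK_3,BK_4)\ge 5$: the hypergraph $K_4^{(3)}$ has only $\binom43=4$ hyperedges, so it contains no $BK_4$ whatsoever (one would need $\binom42=6$ hyperedges), and colouring all four hyperedges blue leaves no red $BK_3$ and no blue $BK_4$. For item (2), note that $K_5^{(3)}$ has exactly $\binom53=10$ hyperedges, while a blue $BK_5$ has core equal to the whole vertex set and needs all $\binom52=10$ pairs represented by \emph{distinct} blue hyperedges. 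Hence a single red hyperedge destroys every blue $BK_5$, whereas $\binom42=6$ red hyperedges are needed for a red $BK_4$; colouring one hyperedge red and the remaining nine blue therefore witnesses $R^3(BK_4,BK_5)\ge 6$.

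\textbf{The detection subroutine.} The computational primitive underlying everything is: given a $2$-coloured $K_n^{(r)}$ and a target $t$, decide whether some colour class contains a Berge-$K_t$. Fixing a colour class $\F$ and a candidate core $U\in\binom{[n]}{t}$, a Berge-$K_t$ on core $U$ exists iff the family $\{\F_{uv}:\{u,v\}\in\binom U2\}$, with $\F_{uv}=\{f\in\F:\{u,v\}\subseteq f\}$, admits a system of distinct representatives; by Hall's theorem this is exactly a matching saturating $\binom U2$ in the bipartite \emph{pair--hyperedge incidence graph}, decidable in polynomial time. Ranging over all $\binom nt$ cores and both colours tests a fixed colouring for a monochromatic Berge-$K_t$.

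\textbf{Upper bounds.} For each relevant case -- $(n,\{s,t\},r)=(5,\{3,4\},3)$ and $(6,\{4,5\},3)$ for (1),(2), and $(t+2,\{t,t\},4)$ for $2\le t\le 5$ together with $(7,\{6,6\},4)$ for (3),(4) -- we must verify that \emph{no} $2$-colouring of $K_n^{(r)}$ avoids the required monochromatic Berge clique. We attempt to construct such an avoiding colouring by isomorph-free generation under the $S_n$-action on vertices (orderly generation, or canonical augmentation with \texttt{nauty}-style canonical labelling of the $2$-coloured hypergraph) over the hyperedges in a fixed order, abandoning a branch as soon as the matching test detects that one colour already contains a Berge-$K_t$, since adding hyperedges can never remove it. In the diagonal cases (3),(4) the colour swap is an extra symmetry, so we may take the red class to be the minority colour and bound its size. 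The search returns empty precisely when every colouring contains a monochromatic Berge clique, which is the asserted upper bound; combined with the lower bounds above this yields the equalities in (1),(2).

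\textbf{Main obstacle.} The difficulty is one of scale rather than of a single deep idea: the raw number of colourings is $2^{\binom nr}$, about $3.4\times 10^{10}$ in the hardest case ($n=7$, $r=4$, with $\binom74=35$ hyperedges), so the substantive work is the isomorph-free enumeration and the pruning that make the search terminate; once it completes, correctness is a finite, mechanically checkable assertion. A secondary point to get right is that the matching-based detection must range over \emph{all} $\binom nt$ cores, because the representing hyperedges need not lie inside the core -- so the certificate of containment is genuinely a system of distinct representatives, and not merely a clique in the colour's $2$-shadow.
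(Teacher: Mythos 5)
Your proposal is correct and takes essentially the same route as the paper: the paper establishes all four items by exhaustive computer verification (its lower bounds for (1) and (2) come from the explicit colourings in Propositions~\ref{prop:small-case} and~\ref{berge:lower}, which your hyperedge-counting colourings replace with equally valid constructions), and your Hall-theorem/SDR detection over all $\binom{n}{t}$ cores with isomorph-free, monotonically pruned generation is a sound instantiation of such a search. In particular you correctly identify the one subtlety such a computation must respect, namely that Berge containment is a system of distinct representatives whose hyperedges need not lie inside the core, rather than a clique in the colour's $2$-shadow.
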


\subsection{Proof of Theorem~\ref{berge-Ramsey}}

Recall that the number $R^3(BK_s, BK_t)$ is the smallest number $N$ such that any $2$-edge-colored complete  $3$-uniform hypergraph (with colors blue and red) on $n\geq N$ vertices either contains a blue Berge $K_s$ or a red Berge $K_t$.
In this subsection, we will show that
\begin{displaymath}
R^3(BK_s, BK_t) = \begin{cases}
			t +s -1 &  \textrm{if $s=t=2$, $s=t=3$ or $\{s,t\}= \{2,3\}$ or $\{s,t\}= \{2,4\}$} ,\\
            t+s-2 & \textrm{if $s = 2, t\geq s+3$, or $s=3, t\geq s+1$ or $s=t=4$}, \\
            t+s-3 & \textrm{if $s \geq 4$ and $t\geq 5$.}
				  \end{cases}
\end{displaymath}                  

Let us first deal with the cases when one of $s$ or $t$ is small. In particular, we prove them in the following proposition.

\begin{proposition}\label{prop:small-case}
We have 
\begin{enumerate}[label=\rm{(\arabic*)}]
\item\label{sm:1} $R^3(BK_2,BK_2) = 3$.
\item\label{sm:2} $R^3(BK_2,BK_3) = 4$.
\item\label{sm:3} $R^3(BK_3,BK_3) = 5$.
\item\label{sm:2-4} $R^3(BK_2,BK_4) = 5$.
\item\label{sm:4} $R^3(BK_4,BK_4) = 6$.
\item\label{sm:5} $R^3(BK_2,BK_t) = t$ when $t \geq 5$.
\item\label{sm:6} $R^3(BK_3,BK_t) = t+1$ when $t\geq 4$.
\end{enumerate}
\end{proposition}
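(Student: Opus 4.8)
The plan is to prove each of the seven statements by establishing a matching lower bound (a 2-coloring of $K_N^{(3)}$ with no blue $BK_s$ and no red $BK_t$) and upper bound (every coloring on the target number of vertices contains one of the monochromatic structures). The key structural fact I would exploit throughout is a counting/degree observation: a Berge-$K_s$ requires $\binom{s}{2}$ distinct hyperedges whose cores can be realized on $s$ vertices, so to rule out a monochromatic $BK_s$ it suffices to ensure that no $s$-subset of vertices admits an injection from its $\binom{s}{2}$ pairs into distinct hyperedges of a single color. For the lower bounds I would look for highly symmetric colorings — typically coloring all hyperedges inside a fixed vertex subset one color and the rest the other — and verify by this counting obstruction that neither monochromatic Berge clique can be embedded.

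\textbf{Lower bounds.} First I would handle the small explicit values. For the base cases such as $R^3(BK_2,BK_2)=3$ and $R^3(BK_2,BK_3)=4$, the lower bounds are immediate: a $BK_2$ is just a single hyperedge, and on too few vertices one simply cannot form the required configuration, so I would exhibit a trivial coloring (e.g. on $N-1$ vertices, color everything one color and check the other color has no edge, or use that $\binom{N-1}{3}$ is too small to host the needed hyperedges). For the larger items like \ref{sm:4}, \ref{sm:5}, \ref{sm:6}, I would take the canonical extremal coloring on $N-1$ vertices: partition the vertices and color all hyperedges meeting a small blue-clique region blue and the rest red, tuning the partition sizes to $s$ and $t$. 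The main technical content here is verifying that a red $BK_t$ genuinely needs $t$ vertices carrying $\binom{t}{2}$ red hyperedges, which fails below the stated threshold.

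\textbf{Upper bounds.} For the upper direction I would argue by contradiction: assume a 2-coloring of $K_N^{(3)}$ on $N$ vertices with no blue $BK_s$ and no red $BK_t$, and derive a contradiction from a Hall-type matching argument. The crucial reduction is that embedding a Berge-$K_s$ on a core set $C$ of size $s$ amounts to finding a system of distinct representatives: to each of the $\binom{s}{2}$ pairs in $C$ we must assign a distinct monochromatic hyperedge of the correct color containing that pair. By Hall's theorem, such an SDR exists unless some collection of pairs has too few available hyperedges, which forces a structural deficiency I can count against. I would run this argument for the specific small cases (\ref{sm:1}--\ref{sm:4}) by direct inspection, possibly invoking Proposition~\ref{prop:comp} for the computer-verified instances, and for the generic cases (\ref{sm:5}, \ref{sm:6}) by the Hall-type counting once $N$ reaches $t$ or $t+1$.

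\textbf{The main obstacle} I expect is the Hall-condition verification in the upper bounds: showing that on $N$ vertices one can \emph{always} find the system of distinct representatives assigning distinct same-colored hyperedges to all $\binom{s}{2}$ pairs of a chosen core. The difficulty is that a single vertex-triple covers three pairs at once, so the hyperedges are not freely assignable and the bipartite availability graph between pairs and hyperedges must be analyzed carefully; I would need to track how many hyperedges of each color are forced and ensure no subset of pairs is starved. A secondary obstacle is the delicate boundary behavior near small $s,t$ — the reason the formula in Theorem~\ref{berge-Ramsey} splits into three regimes — which is precisely why several of these base cases (\ref{sm:1}--\ref{sm:4}) are isolated here and partly delegated to computer verification rather than folded into a uniform argument.
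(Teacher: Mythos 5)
Your plan is adequate for the $BK_2$ items and for the counting lower bounds in items (1)--(5), but it has two genuine gaps, both concentrated in item (7), $R^3(BK_3,BK_t)=t+1$, which is the only part of the proposition with substantial content. The first gap is the lower bound for item (7). Your proposed family of colorings --- ``color all hyperedges meeting a small blue-clique region blue and the rest red'' --- provably cannot show $R^3(BK_3,BK_t)>t$ for $t\ge 5$. Indeed, if the blue region $A$ is nonempty and at least two vertices $x,y$ lie outside it, pick $v\in A$ and any fourth vertex $w$: the hyperedges $\{v,x,y\}$, $\{v,x,w\}$, $\{v,y,w\}$ all meet $A$, hence are blue, and they embed the pairs $\{x,y\},\{v,x\},\{v,y\}$, giving a blue $BK_3$; in the degenerate cases ($A=\emptyset$, or all but at most one vertex in $A$) one color class contains the complete hypergraph and a monochromatic Berge clique of the forbidden size appears. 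Note also that the paper's own partition construction (Proposition~\ref{berge:lower}) only yields $R^3(BK_3,BK_t)\ge t$, one short of what is needed. The construction that actually works is of a different type: fix two vertices $v_1,v_2$ and color blue exactly the hyperedges \emph{containing both} $v_1$ and $v_2$, all other hyperedges red. Then no Berge clique of either color can have both $v_1,v_2$ in its core (for blue, the pairs $\{v_1,x\}$ and $\{v_2,x\}$ can only be represented by the single edge $\{v_1,v_2,x\}$; for red, the pair $\{v_1,v_2\}$ lies in no red edge), so on $t$ vertices there is no blue $BK_3$ and no red $BK_t$. This containment-of-a-fixed-pair idea is the key point missing from your proposal.

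The second gap is the upper bound for item (7) at general $t$. ``Hall-type counting once $N$ reaches $t+1$'' does not close the argument, because Hall's condition for the bipartite graph between core pairs and red hyperedges genuinely fails for badly chosen cores: in the construction above placed on $t+1$ vertices, the pair $\{v_1,v_2\}$ lies in no red edge at all, yet there is still no blue $BK_3$. So one cannot fix a core and count; the failure of the matching must be converted into structure. This is exactly what the paper's Lemma~\ref{berge:induction} does: it proves $R^3(BK_t,BK_s)\le\max\{R^3(BK_{t-1},BK_s),R^3(BK_t,BK_{s-1})\}+1$ by applying Hall's theorem to the blue trace of a vertex $v$, analyzing how the matching fails (producing a vertex $u$ with $\{v,u,x\}$ red for essentially all $x$), and then using those red edges to extend a red $BK_{s-1}$ through $v$; combined with the computer-verified base case $R^3(BK_3,BK_4)=5$ from Proposition~\ref{prop:comp}, this yields the upper bounds in items (3), (5) and (7). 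None of this machinery, nor a working substitute, appears in your sketch --- you correctly identify the obstacle (subsets of pairs can be starved) but do not overcome it, and overcoming it is essentially the whole proof. A minor salvage: for items (3) and (5) alone, monotonicity plus Proposition~\ref{prop:comp} suffices, since $R^3(BK_3,BK_3)\le R^3(BK_3,BK_4)=5$ and $R^3(BK_4,BK_4)\le R^3(BK_4,BK_5)=6$, which is close to what you suggest; but no such shortcut exists for item (7), where the bound must hold for every $t\ge 4$.
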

\begin{proof}
\ref{sm:1} is trivial since any non-trivial edge-colored $3$-uniform hypergraph contains at least $3$ vertices and any edge is a $BK_2$. 
For~\ref{sm:2}, $R^3(BK_2, BK_3) > 3$ since a single red edge is a complete $K_3^{(3)}$ and is not a red $BK_3$. For the upper bound, suppose we have an edge-colored $K^{(3)}_4$. If it has a blue edge, we get a blue $BK_2$. Otherwise all of the $4$ edges are red, in which case we have a red $BK_3$. 
Similar reasoning gives~\ref{sm:2-4} and~\ref{sm:5}.
For~\ref{sm:3}, $R^3(BK_3,BK_3) > 4$ since an edge-colored $K^{(3)}_4$ with two red and two blue edges does not have a monochromatic $BK_3$. Similar reasoning gives the lower bound of~\ref{sm:4}. The upper bounds of~\ref{sm:3} and~\ref{sm:4} follow from Lemma~\ref{berge:induction}. 
For~\ref{sm:6}, we first show that $R^3(BK_3, BK_t) > t$. Let $\cH$ be an edge-color $K^{(3)}_t$ with two special vertices $v_1, v_2$ such that any hyperedge containing both $v_1, v_2$ is blue and all other hyperedges are colored red. Observe that any blue Berge clique or red Berge clique cannot contain both $v_1$ and $v_2$. Therefore, there is no blue $BK_3$ or red $BK_t$ in $\cH$.  For the upper bound, it is checked by computer that $R^3(BK_3, BK_4)= 5$ and the bound $R^3(BK_3, BK_t) \leq  t+1$ ($t\geq 5$) follows from Lemma~\ref{berge:induction}, which will be proven later.
\end{proof}

Next we show the lower bound in the following proposition.
\begin{proposition}\label{berge:lower}
Suppose  $s,t\geq 3$. We then have 
\begin{displaymath}
R^3(BK_t, BK_s) \geq t+s-3.
\end{displaymath}
\end{proposition}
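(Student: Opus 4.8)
The plan is to exhibit an explicit red/blue coloring of the complete $3$-uniform hypergraph on $n := t+s-4$ vertices that contains neither a blue $BK_t$ nor a red $BK_s$; this immediately gives $R^3(BK_t,BK_s) \ge n+1 = t+s-3$.

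First I would split the vertex set $V$ into two parts $A$ and $B$ with $|A| = s-2$ and $|B| = t-2$; this is possible since $s,t\ge 3$ forces $|A|,|B|\ge 1$, and $|A|+|B| = s+t-4 = n$. I then color a triple $e\in\binom{V}{3}$ red whenever $|e\cap A|\ge 2$ and blue otherwise (so blue is exactly $|e\cap B|\ge 2$). The single feature driving everything is that a blue hyperedge meets $A$ in at most one vertex, while a red hyperedge meets $B$ in at most one vertex.

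The heart of the argument is a pigeonhole on the core of a putative Berge clique. Suppose there were a blue $BK_t$ with core $C$; then $|C|=t$, and since $|B|=t-2$ we get $|C\cap A|\ge t-|B|=2$, so $C$ contains two vertices $a_1,a_2\in A$. By the definition of a Berge-$K_t$, the pair $\{a_1,a_2\}$ must be covered by a blue hyperedge $f$ with $\{a_1,a_2\}\subseteq f$, forcing $|f\cap A|\ge 2$ and contradicting that $f$ is blue. The red case is symmetric: a red $BK_s$ would have a core of size $s>|A|=s-2$, placing two core vertices $b_1,b_2\in B$, whose covering red hyperedge would have to meet $B$ in at least two vertices, which is impossible.

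There is no genuine obstacle to overcome here; the only point requiring care is calibrating the two part sizes as $s-2$ and $t-2$, so that a core of size $t$ (respectively $s$) is forced to put at least two vertices on the side where monochromatic coverage is unavailable. The degenerate boundary case $s=t=3$ (where $n=2$ and $\binom{V}{3}=\varnothing$) holds vacuously and should simply be noted.
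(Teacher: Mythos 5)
Your proof is correct and takes essentially the same approach as the paper: the same bipartition into parts of sizes $t-2$ and $s-2$ with a triple colored according to which part it meets in at least two vertices, and the same observation that the core of a monochromatic Berge clique can contain at most one vertex from the ``wrong'' part. The only difference is cosmetic (your part labels are swapped, and you state the pigeonhole/covering step slightly more explicitly, including the vacuous $s=t=3$ case).
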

\begin{proof}
We will construct a $2$-edge-colored complete $3$-uniform hypergraph $\cH$ on $t+s-4$ vertices without a blue $BK_t$ and red $BK_s$. Let $V(\cH) = A\sqcup B$ where $\abs{A} = t-2$ and $\abs{B} = s-2$. For all $a,a' \in A$, $b\in B$, color the hyperedge $\{a,a',b\}$ blue. For all $a \in A$, $b,b'\in B$, color the hyperedge $\{a,b,b'\}$ red. Moreover, color all triples in $A$ blue and all triples in $B$ red. Observe that any blue Berge clique contains at most one vertex from $B$ and any red Berge clique contains at most one vertex from $A$. It follows that $\cH$ does not contain a blue $BK_t$ or a red $BK_s$. Hence $R^3(BK_t, BK_s) \geq t+s-3$.
\end{proof}

Before we present the proof of Theorem~\ref{berge-Ramsey}, we will prove the following lemma.
\begin{lemma}\label{berge:induction}
\label{berge}
Suppose $t,s \geq 3$. Then
\begin{displaymath}
R^3(BK_t,BK_{s}) \leq \max\{R^3(BK_{t-1},BK_{s}),R^3(BK_{t},BK_{s-1})\}+1.
\end{displaymath}
\end{lemma}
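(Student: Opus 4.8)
The plan is to prove the recursion by a vertex-deletion argument combined with a ``private edge'' extension lemma for Berge cliques. Write $N = \max\{R^3(BK_{t-1}, BK_s), R^3(BK_t, BK_{s-1})\}$ and suppose, for contradiction, that there is a red/blue coloring of the complete $3$-uniform hypergraph $\cH$ on $N+1$ vertices with no blue $BK_t$ and no red $BK_s$. Fix a vertex $v$ and delete it. On the remaining $N$ vertices, the inequality $N \ge R^3(BK_{t-1}, BK_s)$ forces a blue $BK_{t-1}$ (a red $BK_s$ being excluded), say on a core $A$ with $|A| = t-1$; likewise $N \ge R^3(BK_t, BK_{s-1})$ forces a red $BK_{s-1}$ on a core $B$ with $|B| = s-1$.

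The engine is the following extension observation. Suppose the blue $BK_{t-1}$ on $A$ uses hyperedge set $E_b$; since these hyperedges live in $\cH - v$, they all avoid $v$. If for every $a \in A$ there is a blue hyperedge $\{v, a, w_a\}$ with $w_a \notin A \cup \{v\}$, then these $t-1$ hyperedges are pairwise distinct (for $a \ne a'$ the two already differ in their unique $A$-vertex, since $w_a, w_{a'} \notin A$) and distinct from every member of $E_b$ (which omit $v$); adjoining them to $E_b$ realizes a blue $BK_t$ with core $A \cup \{v\}$, a contradiction. Hence there exists $a^* \in A$ such that every hyperedge $\{v, a^*, w\}$ with $w \notin A \cup \{v\}$ is red. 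The symmetric argument applied to the red $BK_{s-1}$ on $B$ produces $b^* \in B$ such that every hyperedge $\{v, b^*, w\}$ with $w \notin B \cup \{v\}$ is blue.

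I would then inspect the single hyperedge $\{v, a^*, b^*\}$. Provided $a^*$ and $b^*$ are distinct with $b^* \notin A$ and $a^* \notin B$, the first witness forces this hyperedge red while the second forces it blue, yielding the desired contradiction. Dispatching this generic case already settles the recursion whenever the two cores can be taken essentially disjoint.

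The main obstacle is that $A$ and $B$ need not be disjoint: since $|A| + |B| = t+s-2$ may exceed $N$, the recursion is tight precisely because the cores are forced to overlap, and the witnesses $a^*, b^*$ may both lie in $A \cap B$, killing the one-hyperedge contradiction. To handle this I would upgrade the private-edge step to a full Hall/SDR analysis: failure of the blue extension means that the deficiency in the bipartite ``pair versus blue hyperedge through $v$'' graph localizes to vertices whose entire external link through $v$ is red, and symmetrically for red and $b^*$. In the residual case where both witnesses sit in $A \cap B$, I would exploit the resulting abundance — such a vertex has all of its external hyperedges through $v$ forced red by one condition and blue by the other — to either locate a common external vertex giving a directly bicolored hyperedge, or to rebuild a monochromatic $BK_t$ or $BK_s$ of the deficient color from these many monochromatic hyperedges via a matching argument. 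Verifying that this reconstruction always succeeds, together with the boundary situation $A \cup B = V(\cH) \setminus \{v\}$, is where I expect the real work to lie.
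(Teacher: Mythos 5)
Your generic case is sound, but the proof has a genuine gap exactly where you place ``the real work,'' and that residual case is not a degenerate corner --- it is the \emph{main} case. In the tight regime this lemma is used in, $N = t+s-4$, so $\abs{A}+\abs{B} = (t-1)+(s-1) = t+s-2 \ge \abs{V'}+2$, which forces $\abs{A \cap B} \ge 2$; nothing lets you steer the witnesses away from the intersection, or guarantee $b^* \notin A$ and $a^* \notin B$, so the one-hyperedge contradiction at $\{v,a^*,b^*\}$ is simply unavailable in general. Worse, your ``private-edge'' failure conclusion (redness only for witnesses $w \notin A \cup \{v\}$) is too weak to salvage things by extending the red clique instead: to add $v$ to the core $B$ you would need red hyperedges $\{v,a^*,x\}$ for $x \in B$, and for $x \in B \cap A$ your conclusion says nothing about their color. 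Even after upgrading both sides to a full Hall analysis (giving $a^*$ with at most one blue hyperedge through $\{v,a^*\}$ and $b^*$ with at most one red hyperedge through $\{v,b^*\}$), the symmetric contradiction still fails whenever $\{v,a^*,b^*\}$ happens to be the unique exceptional hyperedge of one of the two vertices; and your fallback of finding a ``common external vertex'' collapses precisely in the boundary situation $V' = A \cup B$ that you flag but do not resolve.

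The paper's proof avoids all of this by being \emph{asymmetric} rather than symmetric. It applies Hall's theorem only on the blue side (Claim~\ref{cl:large-red-degree}), in the incidence graph between the blue core $Y$ and the edges of the blue trace $G$ of $v$ --- crucially allowing blue hyperedges $\{v,u,u'\}$ with \emph{both} endpoints in $Y$ --- so that failure yields $u \in Y$ with $d_G(u) \le 1$: every hyperedge $\{v,u,x\}$, for \emph{every} $x \in V'$, is red except at most one, say $\{v,u,w\}$, and a further argument shows that some $\{v,w,w_x\}$ is also red. Then, instead of hunting for a single bicolored hyperedge, it extends the red $BK_{s-1}$ core $X$ by $v$ directly: pairs $\{v,x\}$ with $x \in X\setminus\{u,w\}$ go to the red hyperedges $\{v,u,x\}$, the pair $\{v,w\}$ (if $w \in X$) to $\{v,w,w_x\}$, and the pair $\{v,u\}$ (if $u \in X$) to $\{v,u,y\}$ for some $y \notin X \cup \{w\}$, which exists by the lower-bound propositions. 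Because this finish needs only red hyperedges through the pair $\{v,u\}$, it is completely insensitive to how the blue and red cores intersect --- exactly the issue your two-witness scheme cannot get past. Repairing your argument would in effect mean importing both the stronger full-Hall conclusion and this asymmetric extension, i.e., reproducing the paper's proof.
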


\begin{proof}
Without loss of generality, assume $t \geq s$. Let $\h$ be a 2-edge-colored complete 3-uniform hypergraph with vertex set $V$ of size at least $N:= \max\{R^3(BK_{t-1},BK_{s}),R^3(BK_t,BK_{s-1})\}+1$. 
We want to show that $\h$ contains either a blue $BK_t$ or a red $BK_s$ as a sub-hypergraph. 

Fix $v\in V$ and let $\h'$ be the hypergraph induced by $V':=V\backslash\{v\}$. Since $\abs{V'} \geq R^3(BK_{t-1},BK_{s})$, it follows by definition that $\h'$ contains a blue $BK_{t-1}$ or a red $BK_{s}$. If there is a red $BK_s$ we are done. Otherwise suppose we have a blue $BK_{t-1}$, with the vertex set $Y$ as its core. Now let us consider $G$, the blue trace of $v$ in $\h$, i.e., $G$ is a 2-edge-colored complete graph with vertex set $V'$ and there exists an edge $\{x,y\}$ in $G$ if and only if the hyperedge $\{x,y,v\}$ in $\h$ is colored blue.

\begin{claim}\label{cl:large-red-degree}
Either we can extend $Y$ using $v$ to obtain a blue $BK_t$ or there exists a vertex $u\in Y$ with $d_G(u) \leq 1$. Moreover if $d_G(u) = 1$ and $\{u,w\}$ is the only edge containing $u$, then $d_G(w) <  N-2.$
\end{claim}

\begin{proof}

Consider the incidence graph of $G$, i.e., the bipartite graph $I = Y \cup E(G)$ such that for every $u\in Y$, $e\in E(G)$, $u$ is incident to $e$ if and only if $u\in e$. 
Observe that $Y$ is the core of a blue $BK_{t-1}$ with none of its hyperedges containing $v$.
Therefore, by our definition of $G$ (the blue trace of $v$ in $\h$), if there is a matching of $Y$ in $I$, then we can obtain a blue $BK_t$ with $Y\cup \{v\}$ as its core.

Now assume $I$ does not contain a matching of $Y$. We first claim that there exists a vertex $u\in Y$ with $d_G(u) \leq 1$. Note that the degree of each $e \in E(G)$ is at most $2$. Thus, if $d_I(u) \geq 2$ for all $u\in Y$, then it follows that for every $S\subseteq Y$, $|N_I(S)| \geq |S|$, which gives us a matching on $Y$ by Hall's condition. Thus by contradiction, we have a vertex in $Y$ of degree at most $1$ in $G$.

Suppose now $d_G(u) = 1$ for some $u$ in $Y$ and $e = \{u,w\}$ is the unique edge containing $u$. We claim that $d_G(w) < N-2$. Suppose not, i.e., $d_G(w) \geq N-2$. This implies that $\{v,w,z\}$ is a blue edge for every $z \in V(\cH)\backslash \{v,w\}$. Moreover, by our lower bound in Proposition~\ref{prop:small-case} (when $s,t$ are small) and Proposition~\ref{berge:lower}, there exists another vertex $y \in V'\backslash Y$. It follows that we can extend $Y$ into the core of a blue $BK_t$ with the following embedding:
for each $z \in Y\backslash\{w\}$, embed $\{v,z\}$ to the hyperedge $\{v,z,w\}$. Then embed $\{v,w\}$ to $\{v,w,y\}$. Thus if we do not have a blue $BK_t$ with $Y\cup v$ as its core, then we have $d_G(w) < N-2$.
\end{proof}

This claim says that either there exists $u \in Y$ such that $\{v,u,x\}$ is red for every $x \in V'\backslash\{u\}$, or there exists $u,w\in V'$ such that $\{v,u,x\}$ is red for every $x\not= w$ and there exists $w_x$ such that $\{v,w,w_x\}$ is red. Note that the second case covers the first case by taking $w_x = u$. So it suffices to assume the second case.

Now since $N-1 \geq R^3(BK_{t},BK_{s-1})$, it follows that $\h'$ either contains a blue $BK_{t}$ or a red $BK_{s-1}$. We are done in the former case. Otherwise, suppose that $\h'$ contains a red $BK_{s-1}$. We will show that we can extend this $BK_{s-1}$ by adding the vertex $v$ into its core.
Let $X$ be the core of the Berge-$K_{s-1}$. Now for every $x\in X$ with $x\notin \{u,w\}$, we know that the edge $\{v,u,x \}$ is colored red. Hence we can embed $\{v,x\}$ into the red hyperedge $\{v,u,x\}$. It follows that  we have an embedding of the edges from $v$ to all but at most two vertices of $X$, namely $u,w$. In the case that $w \in X$, we can embed $\{v,w\}$ into the hyperedge $\{v,w,w_x\}$, which is red. Now if $u\notin X$, we are done. Otherwise, assume $u \in X$. Note that by the lower bounds in Proposition~\ref{prop:small-case} (when $s,t$ are small) and Proposition~\ref{berge:lower}
$|V'|=N-1\geq \max\{R^3(BK_{t-1}, BK_{s}), R^3(BK_t, BK_{s-1})\} \geq s+1.$ Hence it follows that there exists another vertex $y \in V(\cH')\backslash (X\cup\{w\})$. Note that by our choice of $u$, $\{v,u,y\}$ is red. Thus we can embed $\{v,u\}$ into $\{v,u,y\}$. The above embedding extends $X$ into the core of a red $BK_s$ and we are done.
\end{proof} 

\begin{lemma}\label{lm:4t-base-case}
$R^3(BK_4,BK_t) = t+1$ for $t \geq 5$.
\end{lemma}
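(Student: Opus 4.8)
The goal is to establish $R^3(BK_4,BK_t) = t+1$ for $t \geq 5$. The lower bound $R^3(BK_4,BK_t) \geq t+1$ is already provided by Proposition~\ref{berge:lower} (with $s=4$, it gives $\geq t+1$), so the work is entirely in the upper bound: any $2$-edge-colored complete $3$-uniform hypergraph on $t+1$ vertices must contain a blue $BK_4$ or a red $BK_t$. I would like to proceed by induction on $t$, using Proposition~\ref{prop:comp} part (2), namely $R^3(BK_4,BK_5)=6$, as the base case. The natural inductive step would come from Lemma~\ref{berge:induction}, which gives $R^3(BK_4,BK_t) \leq \max\{R^3(BK_3,BK_t), R^3(BK_4,BK_{t-1})\}+1$. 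By Proposition~\ref{prop:small-case}\ref{sm:6} we know $R^3(BK_3,BK_t) = t+1$, so this inner term is exactly the quantity we are trying to bound, which means the recursion from Lemma~\ref{berge:induction} does \emph{not} by itself yield the desired $t+1$ upper bound --- it only gives $R^3(BK_4,BK_t) \leq \max\{t+1, (t-1)+1\}+1 = t+2$. Thus the main obstacle is that Lemma~\ref{berge:induction} is too lossy by exactly one, and a more careful direct argument is required to shave off the extra vertex.

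The plan, therefore, is to give a direct argument on $n = t+1$ vertices that sharpens the proof strategy of Lemma~\ref{berge:induction}. I would fix a vertex $v$ and look at $\h' = \h - v$, which has $t$ vertices, so $|V(\h')| = t = R^3(BK_4,BK_{t-1})$ (using the inductive hypothesis). Hence $\h'$ contains a blue $BK_4$ (done) or a red $BK_{t-1}$; in the latter case I would attempt to extend the red $BK_{t-1}$ on core $X$ (with $|X|=t-1$) to a red $BK_t$ by incorporating $v$, as in Lemma~\ref{berge:induction}. The extension succeeds via Hall's theorem on the incidence graph of the red trace of $v$ unless there is an obstruction: some vertex $u \in X$ with red-trace-degree $\le 1$, etc. The key new idea needed is to exploit the tightness of the vertex count --- with only one vertex $v$ outside $X$ and $|X| = t-1 = n-2$, the blue/red trace structure of $v$ is highly constrained, and I expect to extract a blue $BK_4$ (which only needs $4$ core vertices and $6$ hyperedges, a small and flexible target) whenever the red extension fails. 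Specifically, a failure of the red extension forces a cluster of blue hyperedges through $v$, and because $BK_4$ is so small, such a blue cluster should directly furnish a blue $BK_4$ on $\{v\}$ together with three suitable vertices.

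The heart of the matter is the asymmetry between the two target cliques: $s=4$ is fixed and small while $t$ is large. I would set up the finitely many obstruction cases to the red extension (following the Hall-condition analysis in Claim~\ref{cl:large-red-degree}: either a vertex $u$ of blue-trace-degree $0$, or a vertex $u$ of blue-trace-degree $1$ whose neighbor $w$ has small blue-trace-degree) and in each case count blue hyperedges through $v$ carefully. Because we are at $n = t+1$ vertices, failing to extend to a red $BK_t$ pins down enough blue hyperedges incident to $v$ (and, by symmetry, among the other vertices) that a blue $BK_4$ can be assembled by hand: one picks four core vertices and greedily/Hall-matches the six required edges to distinct blue hyperedges. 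I anticipate the genuine difficulty will be bookkeeping the small number of ``bad'' vertices (the $u, w$ arising from the failed matching) and verifying that enough room remains --- since $t \ge 5$ gives at least $6$ vertices total, there is slack to place the four-vertex blue core away from the problematic vertices, but this must be checked explicitly rather than waved through. If the direct casework becomes unwieldy, a fallback is to strengthen the inductive hypothesis (e.g.\ to track an auxiliary structural invariant about the trace of a single vertex) so that the lossy factor in Lemma~\ref{berge:induction} can be recovered, but I would first attempt the direct small-target argument since $BK_4$'s flexibility makes it the more promising route.
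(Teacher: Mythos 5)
Your setup (lower bound via Proposition~\ref{berge:lower}, base case $R^3(BK_4,BK_5)=6$ from Proposition~\ref{prop:comp}, induction on $t$, and the correct observation that Lemma~\ref{berge:induction} alone is off by one) all matches the paper, but the engine of your inductive step fails at exactly the point you call the heart of the matter. You extract a red $BK_{t-1}$ from $\h-v$ by induction, attempt the Hall-type extension through $v$, and claim that when the extension fails, the resulting cluster of blue hyperedges through $v$ directly furnishes a blue $BK_4$. It does not. The Hall obstruction gives you a core vertex $u$ such that $\{v,u,x\}$ is blue for (almost) every $x$, i.e.\ a \emph{book} of blue hyperedges through the fixed pair $\{v,u\}$, and a book contains no Berge clique on more than two core vertices: for a core $\{v,u,x_1,x_2\}$ the pair $\{x_1,x_2\}$ lies in no hyperedge of the book at all, while for a core $\{v,u,x\}$ the pairs $\{v,x\}$ and $\{u,x\}$ would both have to be assigned the single hyperedge $\{v,u,x\}$. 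Indeed, ``all hyperedges through a fixed pair are blue, everything else red'' is precisely the paper's lower-bound construction from the proof of Proposition~\ref{prop:small-case}\ref{sm:6}. In that coloring your procedure breaks down completely: every hyperedge of $\h-v$ is red, so the induction may hand you a red $BK_{t-1}$ whose core contains $u$; the extension through $v$ then fails at $u$; and no blue $BK_4$ (not even a blue $BK_3$) exists anywhere. The monochromatic clique that does exist is a red $BK_t$ whose core avoids $u$ rather than $v$, and your argument, having committed to the vertex $v$ and to the particular core produced by induction, has no mechanism to reach it. Your fallback (strengthening the inductive hypothesis with an unspecified invariant) is not developed enough to close this hole.

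The asymmetry you need runs in the opposite direction, and that is how the paper's proof is organized: a failed one-vertex extension in one color yields a book in the \emph{other} color, and a book through $\{v,u\}$ is useful for \emph{extending} an existing Berge clique of its own color by the vertex $v$ (embed $\{v,x\}\mapsto\{v,u,x\}$), never for \emph{building} one from scratch. So the paper first finds a blue $BK_3$ on $t+1$ vertices (using $R^3(BK_3,BK_t)=t+1$ from Proposition~\ref{prop:small-case}), chooses $v$ disjoint from all of its at most $6$ vertices (possible since $t+1\ge 7$), and tries to extend the blue $BK_3$ to a blue $BK_4$ through $v$; when this fails, Claim~\ref{cl:large-red-degree} produces a nearly complete \emph{red} book through $v$, and it is the red $BK_{t-1}$ obtained by induction in $\h-v$ that gets lifted to a red $BK_t$ using that red book. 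Repairing your plan would require inserting essentially this blue-$BK_3$-first step, at which point you have reproduced the paper's argument; as written, the proposal has a genuine gap.
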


\begin{proof}
We will proceed by induction on $t$. The base case that $R^3(BK_4, BK_5) = 6$ is verified by computer. Suppose now that Lemma~\ref{lm:4t-base-case} is true for all $5\leq t' < t$. Let $\h$ be a $2$-edge-colored complete 3-uniform hypergraph on $t+1$ vertices. Note that by Proposition~\ref{prop:small-case}, we have $R^3(BK_3, BK_{t}) = t+1$. Hence $\cH$ either contains a blue $BK_3$ or a red $BK_{t}$. If the latter happens, we are done. So suppose $\cH$ contains a blue $BK_3$, with the vertex set $Y$ as its core.
Note that $t+1 \geq 7$ and a Berge-triangle contains at most $6$ vertices. Hence there exists a vertex $v$ that is not used by any hyperedge in the blue $BK_3$. Similar to Lemma \ref{berge:induction}, let $G$ be the blue trace of $v$ in $\cH$. Again by Claim \ref{cl:large-red-degree}, either we can extend $Y$ using to $v$ to obtain a blue $BK_4$ or there exists a vertex $u \in Y$ with $d_G(u)\leq 1$. Moreover, if $d_G(u)=1$ and $\{u,w\}$ is the only edge containing $u$, then $d_G(w) < t-1$. In the former case, we are done. Otherwise, WLOG, assume that there exists a $u \in Y$ and $w \in V(\cH)\backslash\{v,u\}$ such that $\{v,u,x\}$ is red for every $x\neq w$ and there exists some vertex $w_x$ such that $\{v,w,w_x\}$ is red. By induction, $\cH[V(\cH)\backslash\{v\}]$ contains either a blue $BK_4$ or a red $BK_t$. In the former case, we are done. In the latter case, we can extend the red $BK_t$ to a red $BK_{t+1}$ in the same way as in Lemma~\ref{berge:induction}.
\end{proof}

Now this result together with Lemma~\ref{berge} allows us to show the following proposition.

\begin{proposition}\label{prop:berge-general}
$R^3(BK_t,BK_s) \leq t + s -3,$ for $t,s \geq 4$ and $\max\{s,t\}\geq 5$.
\end{proposition}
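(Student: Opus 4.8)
The plan is to prove Proposition~\ref{prop:berge-general} by induction using the recursive inequality of Lemma~\ref{berge:induction} together with the base case supplied by Lemma~\ref{lm:4t-base-case}. By symmetry we may assume $t \geq s$, and under the hypotheses $t,s \geq 4$ with $\max\{s,t\} \geq 5$ this forces $t \geq 5$. The induction will be a double induction on $s+t$ (or, equivalently, on $s$ with $t$ varying), with the goal of pushing every instance down to one of the known anchor values.

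First I would identify the base cases that the induction rests on. The case $s = 4$ (with $t \geq 5$) is handled directly by Lemma~\ref{lm:4t-base-case}, which gives $R^3(BK_4, BK_t) = t+1 = t + 4 - 3$, matching the claimed bound exactly. Next I would address the diagonal-type smallest nontrivial case $s = t = 4$ together with $s=4$; but since $\max\{s,t\}\geq 5$ excludes $s=t=4$ from this proposition, the genuine base case is $s = 4$, $t \geq 5$, which Lemma~\ref{lm:4t-base-case} covers. So the induction starts cleanly at $s = 4$.

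For the inductive step, assume $s \geq 5$ and $t \geq s \geq 5$, and suppose the bound $R^3(BK_{t'}, BK_{s'}) \leq t' + s' - 3$ holds for all admissible pairs with $s' + t' < s + t$ (and $s', t' \geq 4$). I would apply Lemma~\ref{berge:induction} to get
\begin{displaymath}
R^3(BK_t, BK_s) \leq \max\{R^3(BK_{t-1}, BK_s), R^3(BK_t, BK_{s-1})\} + 1.
\end{displaymath}
The two terms inside the maximum involve the pairs $(t-1, s)$ and $(t, s-1)$, each of smaller total, so by the inductive hypothesis each is at most $(t-1) + s - 3$ and $t + (s-1) - 3$ respectively, i.e.\ both equal $t + s - 4$. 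Taking the maximum and adding $1$ yields $R^3(BK_t, BK_s) \leq t + s - 3$, completing the step. I would of course need to check that the pairs $(t-1, s)$ and $(t, s-1)$ still satisfy the hypotheses required to invoke the inductive hypothesis or the base case: when $s - 1 = 4$ the term $R^3(BK_t, BK_{s-1}) = R^3(BK_t, BK_4)$ is covered by Lemma~\ref{lm:4t-base-case}, and when $t - 1 = s$ or the reduced indices drop to the boundary one must confirm the appropriate earlier value is available (either from this proposition's base case or from Proposition~\ref{prop:small-case}).

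The main obstacle I anticipate is bookkeeping at the boundary of the induction rather than any deep difficulty: I must ensure that every index arising in the two reduction terms stays within the range where a bound of the form $(\cdot)+(\cdot)-3$ is already known, and in particular handle the transitional cases where one argument reaches $4$ (where the $-3$ formula still holds by Lemma~\ref{lm:4t-base-case}) versus where it would drop below $4$ (which, given $s \geq 5$ in the inductive step, does not occur for the relevant term). A careful statement of the induction hypothesis that explicitly includes the $s=4$ line as a valid instance of the $-3$ bound removes this friction, so the argument reduces to the clean application of Lemma~\ref{berge:induction} displayed above.
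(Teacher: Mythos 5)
Your proof is correct and follows essentially the same route as the paper: the case where one of $s,t$ equals $4$ is the base case supplied by Lemma~\ref{lm:4t-base-case}, and for $t,s \geq 5$ the bound follows by induction on $t+s$ via Lemma~\ref{berge:induction}. The paper states this in a single sentence; your version merely makes explicit the boundary bookkeeping (that the reduced pairs $(t-1,s)$ and $(t,s-1)$ stay in the admissible range), which is a valid and complete elaboration of the same argument.
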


\begin{proof}
We already know this is true if one of $t$ or $s$ is 4, and so for $t,s \geq 5$ the result follows from induction on $t+s,$ using Lemma~\ref{berge}.
\end{proof}

\noindent Theorem~\ref{berge-Ramsey} follows from Proposition~\ref{prop:small-case},~\ref{berge:lower}~and~\ref{prop:berge-general}.

\subsection{Proof of Theorem~\ref{berge:4-uniform}}
In this section, for ease of reference, sometimes we use the notation $h \to e$ to denote that the hyperedge $h \in E(\cH)$ is mapped to the vertex pair $e \in E(G)$ when constructing the embedding of $E(G)$ in $E(\cH)$.

Let us first deal with Theorem~\ref{berge:4-uniform} for small values of $t$.

\begin{proposition}\label{prop:4-uniform-small}
For $2\leq t\leq 5$, $R^4(BK_t,BK_t)= t+2$.
\end{proposition}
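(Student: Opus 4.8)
The plan is to prove $R^4(BK_t, BK_t) = t+2$ for $2 \leq t \leq 5$ by establishing matching lower and upper bounds. For the upper bound $R^4(BK_t, BK_t) \leq t+2$, I would simply invoke Proposition~\ref{prop:comp}(3), which states exactly this and is verified by computer. So the real content is the lower bound $R^4(BK_t, BK_t) \geq t+2$, i.e. exhibiting a $2$-edge-colored complete $4$-uniform hypergraph on $t+1$ vertices that contains neither a blue $BK_t$ nor a red $BK_t$.

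For the lower bound, I would mimic the construction used in Proposition~\ref{berge:lower} but adapt it to the $4$-uniform setting and to the symmetric diagonal case. The key structural obstruction exploited there is that a Berge-$K_t$ needs a core of $t$ vertices supporting $\binom{t}{2}$ hyperedges injectively, so one wants to color edges of the complete $4$-uniform hypergraph on $V$ with $|V| = t+1$ in a way that caps the size of any monochromatic Berge-clique core at $t-1$. The natural attempt is to split $V = A \sqcup B$ with $|A|, |B|$ roughly balanced (for $|V| = t+1$ one takes sizes near $(t+1)/2$) and color a $4$-edge by its majority intersection with $A$ versus $B$: edges meeting $A$ in at least three vertices get one color, edges meeting $B$ in at least three vertices get the other, and one must make a careful choice for the balanced ($2$-$2$) edges. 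The goal is to argue that any blue Berge-clique core lies almost entirely in $A$ (plus perhaps a bounded number of $B$-vertices) and symmetrically for red, so neither can reach $t$ vertices.

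The main obstacle I anticipate is the handling of the balanced $2$-$2$ hyperedges and the bookkeeping on how many "wrong-side" vertices a monochromatic core can absorb. Unlike the $3$-uniform case, where every triple has a clean majority side, the $4$-uniform hypergraph has edges split evenly between $A$ and $B$, and these are precisely the edges that could let a Berge-clique straddle the partition. I would need to either assign the $2$-$2$ edges to both colors consistently with a secondary rule, or verify by a direct Hall-type / incidence-counting argument (as in Claim~\ref{cl:large-red-degree}) that a monochromatic Berge-$K_t$ cannot simultaneously use enough cross-edges to reach a $t$-vertex core. Given that the range is only $2 \leq t \leq 5$, the cleanest route may well be to give an explicit small construction for each $t$ (or a single parametrized one) and check directly that the largest monochromatic Berge-clique has fewer than $t$ core vertices, rather than proving a general lemma; the finiteness of the range makes an explicit case-by-case verification both feasible and likely what the authors intend, possibly again deferring some checks to the computer verification already cited in Proposition~\ref{prop:comp}.
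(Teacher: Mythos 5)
Your upper bound is exactly the paper's: cite Proposition~\ref{prop:comp}(3). The gap is the lower bound, which you sketch but never actually establish. Your partition-and-majority construction leaves the $2$--$2$ hyperedges unassigned, and you explicitly defer their handling to ``a secondary rule'' or ``a direct Hall-type argument'' that you do not supply; your fallback of deferring the check to the computer results is also not available, since Proposition~\ref{prop:comp} only verifies the upper bounds $R^4(BK_t,BK_t)\le t+2$ and says nothing about the existence of colorings of $K^{(4)}_{t+1}$ without monochromatic Berge cliques. So as written, no lower-bound construction has been exhibited or verified.

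Moreover, the structural goal you set yourself --- confining monochromatic Berge-clique cores to one side of a partition --- is much stronger than what is needed, and it is doubtful it can be carried out on $t+1\le 6$ vertices where most hyperedges straddle any partition. The paper's lower bound is a pure counting argument with no structure at all: a Berge-$K_t$ contains exactly $\binom{t}{2}$ distinct hyperedges (the map from graph edges to hyperedges is a bijection), and for $2\le t\le 5$ one checks $\binom{t+1}{4}\le 2\binom{t}{2}-2$ (the values are $0\le 0$, $1\le 4$, $5\le 10$, $15\le 18$). Hence the hyperedges of $K^{(4)}_{t+1}$ can be split arbitrarily into two classes each of size at most $\binom{t}{2}-1$, and then neither color class has enough hyperedges to contain a $BK_t$; this gives $R^4(BK_t,BK_t)\ge t+2$. (For $t=4$ this is especially stark: $K^{(4)}_5$ has only $5$ hyperedges while a $BK_4$ needs $6$, so every coloring works, and no clever choice of coloring is relevant.) This counting obstruction is also why the proposition stops at $t=5$: for $t\ge 6$ one has $\binom{t+1}{4}>2\binom{t}{2}-2$, and indeed the Ramsey number drops to $t+1$ there.
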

\begin{proof}
For the lower bound, we use the fact that if $R^4(BK_t,BK_t) = n$, then $\binom{n}{4} \geq 2\binom{t}{2}-1$. For $2\leq t\leq 5$, this shows that $R^4(BK_t,BK_t) \geq t+2$. The upper bound that $R^4(BK_t,BK_t) \leq t+2$ for $2\leq t\leq 5$ is verified by computer.
\end{proof}

Now we want to show that $R^4(BK_t, BK_t) = t+1$ for all $t \geq 6$.
Again we start with the lower bound by showing the following proposition.
\begin{proposition}\label{4-uniform-lower}
$R^4(BK_t, BK_t) \geq t+1$ for all $t\geq 6$.
\end{proposition}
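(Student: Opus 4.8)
The plan is to exhibit an explicit $2$-edge-coloring of the complete $4$-uniform hypergraph on $t$ vertices that contains neither a blue $BK_t$ nor a red $BK_t$. Since a Berge-$K_t$ on $t$ vertices must use \emph{all} $t$ vertices as its core (the core has size exactly $t$) and must realize all $\binom{t}{2}$ pairs by distinct hyperedges, the key observation is that on exactly $t$ vertices a monochromatic $BK_t$ requires a system of $\binom{t}{2}$ distinct monochromatic hyperedges, one assigned to each pair $\{x,y\}$ with $\{x,y\}$ contained in that hyperedge. Thus it suffices to color so that neither color class admits such a perfect assignment.

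First I would count: on $t$ vertices there are $\binom{t}{4}$ hyperedges in total, which must be split between the two colors. A monochromatic $BK_t$ needs $\binom{t}{2}$ hyperedges of its color, so a necessary condition for avoiding both is that we can balance and, more importantly, obstruct the relevant bipartite matching in each color. The natural construction is to designate a small set of ``poisoned'' pairs for each color: following the idea already used in Proposition~\ref{berge:lower}, I would pick two special vertices (or a small special structure) and color all hyperedges according to how they meet this structure, so that some pair $\{x,y\}$ can only be covered by hyperedges of the wrong color. Concretely, I would try to arrange that every blue hyperedge avoids some fixed pair and every red hyperedge avoids some other fixed pair, forcing any blue Berge-clique to miss a pair (hence not be a $BK_t$) and symmetrically for red.

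The cleanest way to certify that no monochromatic $BK_t$ exists is via Hall's theorem on the bipartite incidence graph between the $\binom{t}{2}$ pairs and the hyperedges of a given color: a color contains a $BK_t$ if and only if this incidence graph has a matching saturating all pairs. So I would set up the coloring so that in each color there is a set $S$ of pairs whose joint neighborhood (the hyperedges of that color containing at least one pair from $S$) is strictly smaller than $|S|$, violating Hall's condition. Choosing $S$ to be all pairs incident to one distinguished vertex, and coloring the hyperedges through that vertex sparsely in the given color, should make this deficiency easy to verify by a direct count that uses $t \geq 6$.

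The main obstacle I anticipate is the counting margin: because we work on exactly $t$ vertices (not $t-1$ or fewer), the hyperedge budget $\binom{t}{4}$ is large relative to $\binom{t}{2}$, so a crude obstruction may fail and the construction must be genuinely clever about \emph{which} pairs are starved in each color simultaneously. I expect the construction to hinge on a small fixed gadget (a pair or triple of special vertices) whose incident hyperedges are colored to block one color, with the complementary hyperedges blocking the other color, and the hard part will be checking the Hall deficiency holds in \emph{both} colors at once without the two obstructions interfering. If a single uniform gadget does not suffice for all $t \geq 6$, I would fall back on verifying the smallest case ($t=6$) and bootstrapping: embed the $t$-vertex construction into a $(t+1)$-vertex lower-bound construction by adding one vertex whose incident hyperedges are colored to preserve both deficiencies, thereby propagating the lower bound upward in $t$.
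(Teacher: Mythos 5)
Your high-level framework is sound --- on exactly $t$ vertices a monochromatic $BK_t$ must use every vertex in its core and realize all $\binom{t}{2}$ pairs by distinct hyperedges of its color, so it suffices to violate Hall's condition in each color class --- but the proposal never produces a coloring that actually achieves this, and both concrete obstruction patterns you suggest fail. The symmetric pair version (``every blue hyperedge avoids a fixed pair $P_1$ and every red hyperedge avoids another fixed pair $P_2$'') is outright impossible: since $t\geq 6$ there is a hyperedge containing $P_1\cup P_2$, and that hyperedge can be neither blue nor red. The symmetric vertex version (starving all pairs at a vertex $v$ in blue and all pairs at a vertex $u$ in red) fails for large $t$: a Hall violation at $v$ forces at most $t-2$ blue hyperedges through $v$, and likewise at most $t-2$ red hyperedges through $u$, yet each of the $\binom{t-2}{2}$ hyperedges containing both $u$ and $v$ must count against one of these two budgets, so you would need $\binom{t-2}{2}\leq 2(t-2)$, which holds only for $t\leq 7$. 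This is exactly the ``interference'' you flag as the hard part, but flagging it does not resolve it; and the fallback of verifying $t=6$ and bootstrapping by adding a vertex is also unjustified, since the target clique grows with the vertex count, so a $t$-vertex coloring avoiding $BK_t$ does not automatically extend to a $(t+1)$-vertex coloring avoiding $BK_{t+1}$.

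The missing idea is that the two obstructions should be of \emph{different types}. The paper's construction colors blue exactly the hyperedges containing both vertices of one fixed pair $\{v_1,v_2\}$ and colors everything else red. Red is then blocked by pair-starvation (no red hyperedge contains the pair $\{v_1,v_2\}$, which any red $BK_t$ on $t$ vertices would have to realize), while blue is blocked by pure counting: there are only $\binom{t-2}{2}$ blue hyperedges in total, fewer than the $\binom{t}{2}$ needed. (Equivalently, in your Hall language: any vertex $x\notin\{v_1,v_2\}$ lies in only $t-3<t-1$ blue hyperedges, so the pairs at $x$ are Hall-deficient in blue.) So the asymmetric gadget you hedge toward does work, but the second color's obstruction must be a counting deficiency rather than another starved pair, and that step --- the actual construction together with its verification --- is absent from your proposal.
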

\begin{proof}
We want to construct a $2$-edge-coloring of a complete $4$-uniform hypergraph on $t$ vertices without a monochromatic $BK_t$. Let $\cH$ be a $K^{(4)}_{t}$ with two special vertices $v_1, v_2$. Any hyperedge containing both $v_1, v_2$ is colored blue. All other hyperedges are colored red. We claim that there is no monochromatic $BK_t$ in $\cH$. Indeed, there is no red $BK_t$ since only one of $v_1, v_2$ can be in any red $BK_t$. For blue $BK_t$, note that by our coloring there are only $\binom{t-2}{2}$ blue edges, which are fewer than the $\binom{t}{2}$ edges needed for $BK_t$.
\end{proof}

Now let us move on to the upper bound. 

\begin{lemma}\label{4-uniform-upper}
For $t\geq 6$, we have that 
$$R^4(BK_t, BK_t) \leq t+1.$$
\end{lemma}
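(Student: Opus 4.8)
The plan is to prove the bound by induction on $t$, taking as base case the computer-verified inequality $R^4(BK_6,BK_6)\le 7$ from Proposition~\ref{prop:comp}. For the inductive step I would establish the implication ``$R^4(BK_{t-1},BK_{t-1})\le t$ implies $R^4(BK_t,BK_t)\le t+1$'', which together with the base case yields $R^4(BK_t,BK_t)\le t+1$ for all $t\ge 6$. Concretely, let $\cH$ be a $2$-edge-coloured complete $4$-uniform hypergraph on $t+1$ vertices and fix a vertex $v$. Since $\cH-v$ has $t\ge R^4(BK_{t-1},BK_{t-1})$ vertices, it contains a monochromatic $BK_{t-1}$; without loss of generality it is blue, with core $Y$, $|Y|=t-1$. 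As in the proof of Lemma~\ref{berge:induction}, the goal is to either extend this blue $BK_{t-1}$ to a blue $BK_t$ using $v$, or else to manufacture a red $BK_t$.

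For the extension I would set up the natural system-of-distinct-representatives problem: match each new edge $\{v,y\}$, $y\in Y$, to a distinct blue hyperedge containing both $v$ and $y$. Because every such hyperedge contains $v$ while none of the hyperedges already realizing the blue $BK_{t-1}$ do, distinctness from the old hyperedges is automatic, so only mutual distinctness matters; this is exactly a matching saturating $Y$ in the bipartite incidence graph between $Y$ and the blue hyperedges through $v$ (equivalently, an SDR in the blue link of $v$, which is now $3$-uniform). If Hall's condition holds we obtain a blue $BK_t$ and are done. If it fails, a Hall-violating set $Z\subseteq Y$ witnesses a deficiency; since each blue hyperedge through $v$ meets $Y$ in at most $3$ vertices, an averaging argument over $Z$ (bounding $\sum_{z\in Z}|B_z|$, where $B_z$ is the set of blue hyperedges through $\{v,z\}$) produces a hub vertex $u\in Z$ through which at most $2$ blue hyperedges pass together with $v$. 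Consequently, for every other vertex $x$, at least $t-4\ge 2$ of the $t-2$ hyperedges containing $\{v,u,x\}$ are red; this is the point where $t\ge 6$ first enters.

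Given such a hub, I would extend \emph{any} red $BK_{t-1}$ (with core $X$) to a red $BK_t$ on core $X\cup\{v\}$ by routing: for each $x\in X$ choose a red hyperedge $\{v,u,x,z_x\}$ to realize the edge $\{v,x\}$. These hyperedges are pairwise distinct because they differ in the coordinate $x$, and are distinct from the old red hyperedges because they contain $v$; the counts above guarantee a red choice of each $z_x$, with only minor bookkeeping when $u$ or the leftover vertex lies in $X$ (the edge $\{v,u\}$ is instead realized directly, which is possible since at least $\binom{t-1}{2}-2>0$ hyperedges through $\{v,u\}$ are red). This routing is the higher-uniformity analogue of the single-vertex trick used in Claim~\ref{cl:large-red-degree}.

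The main obstacle — and the step I expect to be genuinely delicate — is securing a red $BK_{t-1}$ to feed into the routing. Deleting a single vertex only guarantees \emph{one} monochromatic $BK_{t-1}$, and the blue one we found need not be accompanied by a red one; moreover the naive off-diagonal recursion $R^4(BK_t,BK_s)\le\max\{R^4(BK_{t-1},BK_s),R^4(BK_t,BK_{s-1})\}+1$, while provable by the same two mechanisms, loses a vertex at each step and only delivers the bound $t+2$, one more than claimed. To recover the sharp value I would either show that the extension matching cannot fail for a suitably chosen $v$, or extract the required red $BK_{t-1}$ by a global argument over the choice of deleted vertex, exploiting that a failure to extend floods a hub with red hyperedges. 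Closing this gap cleanly while staying within $t+1$ vertices is exactly where the hypothesis $t\ge 6$ and the precise counting must be pushed.
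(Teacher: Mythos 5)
Your setup---induction on $t$ with the computer-verified base case $R^4(BK_6,BK_6)\le 7$, and the Hall-type dichotomy that either the blue $BK_{t-1}$ extends through $v$ or else some vertex $u$ has $d_b(\{v,u\})\le 2$---coincides with the paper's Claim~\ref{cl:almost-blue}. But the step you yourself flag as ``genuinely delicate'' is not a delicate step of an otherwise complete argument: it is the entire second half of the proof, and your proposal contains no argument for it. Your routing scheme presupposes a red $BK_{t-1}$ to extend, and nothing supplies one; induction applied to $\cH-v$ yields a single monochromatic $BK_{t-1}$ whose color you cannot choose, and, as you correctly observe, the off-diagonal recursion in the style of Lemma~\ref{berge:induction} loses a vertex per step and only gives $t+2$. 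So as written the proposal does not establish the bound $t+1$.

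The paper closes this gap by never seeking a red $BK_{t-1}$ at all. It applies the dichotomy to \emph{every} vertex $v$ (the induction hypothesis gives a monochromatic $BK_{t-1}$ in $\cH-v$ for each $v$), concluding that every vertex lies in a \emph{couple}: a pair $\{v,u\}$ with $d_c(\{v,u\})\ge\binom{t-1}{2}-2$ for some color $c$. With $t+1$ vertices there are at least $\lceil (t+1)/2\rceil\ge 4$ couples, so two of them, say $\{v_1,u_1\}$ and $\{v_2,u_2\}$, share a color, without loss of generality red. A red $BK_t$ is then embedded \emph{directly from scratch}, not by extending a smaller Berge clique: pairs inside $V'=V\setminus\{v_1,u_1,v_2,u_2,a_1\}$ are realized by hyperedges $\{v_1,u_1,x_i,x_j\}$ or $\{v_2,u_2,x_i,x_j\}$ according to the parity of $i+j$ (Phase 1), then edges from the four couple vertices to $V'$ (Phase 2), then edges among the couple vertices (Phase 3), with a substantial case analysis absorbing the at most two ``bad pairs'' of each couple and the special vertex $a_1$. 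This two-couple direct embedding is the missing idea; your suggestion of ``a global argument over the choice of deleted vertex, exploiting that a failure to extend floods a hub with red hyperedges'' points in its general direction, but all of the actual work---which is the bulk of the paper's proof of this lemma---remains to be done.
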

\begin{proof}
We prove the lemma by inducting on $t$.
The base case that $R^4(BK_6,BK_6) \leq 7$ is verified by computer. Now assume that $t \geq 7$ and the lemma is true for all $t'<t$.

Let $\cH$ be a $2$-edge-colored complete $4$-uniform hypergraph on a vertex set $V$ of size $t+1$. For ease of reference, given a set of vertices $S$, let $d_b(S)$ and $d_r(S)$ denote the number of blue and red hyperedges containing $S$ as subset, respectively.

\begin{claim}\label{cl:almost-blue}
Suppose $\cH$ does not contain a monochromatic $BK_{t}$.
Let $v$ be a fixed vertex in $\cH$. If there is a monochromatic $BK_{t-1}$ (without loss of generality, assume it is blue) without using any hyperedge containing $v$, then there exists another vertex $u$ such that $d_b(\{v,u\})\leq 2$, i.e., all hyperedges containing both $v,u$ are red except for at most two.
\end{claim}
\begin{proof}
Let $\cH_b$ be the blue Berge-$K_{t-1}$ hypergraph not using any hyperedge containing $v$. Let $\{u_1, u_2, \ldots u_{t-1}\}$ be the core of $\cH_b$. Construct a bipartite graph $G= A \cup B$ where $A = \{u_1, \ldots, u_{t-1}\}$ and $B =\binom{V\setminus\{v\}}{3}$. For $u_i\in A$, $S \in B$, $u_i$ is adjacent to $S$ in $G$ if and only if $u_i \in S$ and $\{v\} \cup S$ is a blue edge in $\cH$.  Note that for every $S \in B$, $d_G(S) \leq 3$. Therefore, if $d_G(u_i) \geq 3$ for every $u_i \in A$, then there exists a matching of $A$ in $G$ by Hall's theorem, which implies that we can extend $\cH_b$ to a blue $BK_{t}$ by adding $v$ into the core of $\cH_b$. This contradicts our assumption that $\cH$ does not have a monochromatic $BK_{t}$, and the proof of Claim~\ref{cl:ex:induct} is complete.
\end{proof}

Now for every $v \in V$, there exists a monochromatic $BK_{t-1}$ in $\cH[V\backslash\{v\}]$ by induction. Hence by Claim~\ref{cl:almost-blue}, for every vertex $v$, there exists another $u$ in $V$, such that $d_c(\{v,u\})\geq \binom{t-1}{2}-2$ for some $c \in$ \{blue,red\}. We then call the pair $\{v,u\}$ a \textit{c couple} where $c \in$ \{blue,red\}. Moreover, call $\{a,b\}$ a `bad pair' of $\{v,u\}$ if the hyperedge $\{a,b,v,u\}$ is not in color $c$.

%%%%%%%%%%%%%%%%%%%%%%%%%%%%%%%%%%%%%%%%%%%%%%%%%%%%%%%%%%%%%%%%%%%%%%%%  Here the new proof of theorem starts
By Claim~\ref{cl:almost-blue}, every vertex is contained in a couple. It follows that we have at least $(t+1)/2 \geq 4$ couples so at least two of them are of the same color. Without loss of generality, let $\{v_1,u_1\}$ and $\{v_2,u_2\}$ be two red couples. Our goal is to obtain a red embedding of a $BK_t$ using mostly edges containing $\{v_1,u_1\}$ and $\{v_2,u_2\}$. We assume that $\{v_1,u_1\} \cap \{v_2,u_2\} = \emptyset$ and remark that the other case is similar and simpler.
Let $\{a_1,b_1\}, \{a_2,b_2\}$ be the two possible bad pairs of $\{v_1,v_2\}$. Let $\{c_1,d_1\}$, $\{c_2,d_2\}$ be two possible bad pairs of $\{v_2,u_2\}$. 
If $\{v_1, u_1\}$ has exactly two bad pairs, we can assume that for at least one of them (with loss of generality the pair $\{a_2, b_2\}$) there is a red edge $h$ containing it. Otherwise $\{a_1,b_1\}$ and $\{a_2,b_2\}$ are blue couples with no bad pairs and it is easy to find a blue $BK_t$ by only using the blue edges containing $\{a_1,b_1\}$ and $\{a_2,b_2\}$.

If $\{v_1, u_1\}$ has exactly one bad pair, let $\{a_1,b_1\}$ be that pair and pick $\{a_2, b_2\}$ arbitrarily. Note that $\{a_2, b_2\}$ is contained in some red edge $h$.
If $\{v_1,u_1\}$ has no bad pair, then pick  $\{a_1,b_1\}$ and $\{a_2,b_2\}$ arbitrarily.
Moreover, we assume that $\{v_1,u_1,v_2,u_2\}$ is a red edge and observe that otherwise constructing the embedding is easier.

Suppose $\{a_1,b_1\}$ and $\{a_2,b_2\}$ have a common vertex $u$. If $u \notin \{v_2, u_2\}$, relabel $a_1, b_1$ such that $a_1 = u$, and if $u \in \{v_2, u_2\}$ relabel $u_2, v_2,a_1, b_1$ such that $b_1 = u_2 = u$. Otherwise just relabel $a_1, b_1$ such that $a_1 \not \in \{v_2,u_2\}$.  Let $x_1,x_2,\dots,x_{t-4}$ be an enumeration of $V':= V \setminus \{v_1,v_2,u_1,u_2,a_1\}$. If $b_1 \not\in \{v_2,u_2\}$, assume $x_1 = b_1$. Othewise assume without loss of generality that $b_1=u_2$. We are going to construct the embedding in three phases:

\begin{description}
\item \textit{Phase 1:} Embed all vertex pairs in $V'$.

Consider the following embedding:
For $i,j\in\{1,\dots,t-4\}$, embed $\{x_i,x_j\}$ in $\{u_1,v_1,x_i,x_j\}$ if $i+j$ is odd otherwise in $\{u_2,v_2,x_i,x_j\}$.

We have a red $BK_{t-4}$ except possibly for at most three missing edges. Without loss of generality, let $\{x_{i_1},x_{j_1}\}$, $\{x_{i_2},x_{j_2}\}$, $\{x_{i_3},x_{j_3}\}$ be the three possible bad pairs where $i_1+j_1$ is odd and both $i_2+j_2$ and $i_3+j_3$ are even. If $\{x_{i_1}, x_{j_1}\}$ is indeed a bad pair of $\{v_1, u_1\}$, then it follows that $\{x_{i_1}, x_{j_1}\} = \{a_2, b_2\}$.
Then we can embed $\{x_{i_2},x_{j_2}\}$ in $\{v_1, u_1, x_{i_2},x_{j_2}\}$, embed $\{x_{i_3},x_{j_3}\}$ in $\{v_1, u_1, x_{i_3},x_{j_3}\}$ and embed $\{x_{i_1}, x_{j_1}\}$ in $h$. Otherwise, $\{x_{i_1}, x_{j_1}\}$ does not exist and the above embedding still works except when one of $\{x_{i_2},x_{j_2}\},\{x_{i_3},x_{j_3}\}$ is the pair $\{a_2, b_2\}$. We can then use $h$ to embed $\{a_2, b_2\}$.

\item \textit{Phase 2:} Embed all edges from $\{v_1, u_1, v_2, u_2\}$ to vertices in $V'$.

Consider the following embedding:
\begin{align*}
 \{v_1,u_1,a_1, x_i\} &\to  \{x_i,u_1\} \textrm{ for $i\neq 1$}.\\
 \{v_1,u_1,v_2, x_i\} &\to \{x_i,v_1\} \textrm{ for $i\neq 1$}. \\
 \{v_2,u_2,a_1, x_i\} &\to \{x_i,u_2\}.\\
 \{v_1,v_2,u_2 x_i\} &\to \{x_i,v_2\}.
\end{align*}

Note that  $x_1$ can only be contained in one bad pair otherwise we would have picked $x_1$ to be $a_1$. Hence among the three edges $\{v_1,u_1,x_1,v_2\}$, $\{v_1,u_1,x_1,u_2\}$, $\{v_1,u_1,a_1,x_1\}$, at least two of them are red.
Embed $\{x_1, v_1\}$, $\{x_1, u_1\}$ into those two red edges. If all three are red, do not use $\{v_1,u_1,u_2,x_1\}$ in this part of the embedding.

Now let us analyze the potential bad cases. There are at most $3$ of these edges in Phase 2 that are not red. 

If $\{u_1,v_1,a_1,x_i,\}, i\not = 1$ is blue, then use the edge $\{v_1,u_1,u_2,x_i\}$ to embed $\{u_1,x_i\}$.

If $\{v_1,u_1,v_2,x_i\},i\not=1$ is blue, then use the edge $\{v_1,u_1,u_2,x_i\}$ to embed $\{v_1,x_i\}$.

If there are two different indexes $i,j$ such that $h_1 \in \{\{v_2,u_2,a_1,x_i\}, \{v_1,v_2,u_2,x_i\}\}$  and $h_2 \in \{\{v_2,u_2,a_1,x_j\},\{v_1,v_2,u_2,x_j\}\}$ are blue, then replace $h_1$ with $\{u_1, v_2, u_2,x_i\}$ and replace $h_2$ with $\{u_1, v_2, u_2, x_j\}$. The same embedding works if there is only one bad pair of $\{v_2, u_2\}$ in this phase.

If for some $i$  both edges $\{v_1,v_2,u_2,x_i\}, \{v_2,u_2,a_1,x_i\}$ are blue, 
then it follows that the edge $\{v_2,u_2,x_i,y\}$ is red for every vertex $y$, with $y\notin \{v_1,a_1,v_2,u_2,x_i\}$. 
Consider the set of edges $E_i = \{\{v_2,u_2,x_i,y\}: y\notin \{v_1,v_2,u_2,a_1,x_i\} \}$. Note that $|E_i| = t-4$. 
In Phase $1$, at most $\ceil{(t-6)/2}$ edges in $E_i$ are used except when $t$ is even and $i$ is odd, in which case $\floor{(t-6)/2}$ edges in $E_i$ are used. If $t$ is even and $i$ is odd, we have at least $t-4-\floor{(t-6)/2} \geq 3$ edges in $E_i$ still available. 
In other cases, we have at least $t-4-\ceil{(t-6)/2} \geq 2$ edges in $E_i$ still available.
Either there exist two edges in $E_i$ that can be used to embed $\{v_2,x_i\}$ and $\{u_2,x_i\}$, or in Phase $1$ there exists some $j$ such that $\{v_1,u_1,x_i,x_j\}$ is blue and $\{v_2, u_2, x_i,x_j\}$ is used to embed $\{x_i, x_j\}$.
In this case, there exists some $k \in \{1,\dots t-4\}\backslash \{i\}$ such that $i+k$ is even and $\{v_1, u_1,x_i, x_k\}$ is red. Embed $\{x_i, x_k\}$ into $\{v_1, u_1, x_i, x_k\}$.
It follows that we again have two available red edges containing $x_i, v_2, u_2$ to embed $\{ v_2,x_i\}$, $\{u_2,x_i\}$.

\item \textit{Phase 3:} Embed the edges in $\displaystyle\binom{\{u_1,v_1,u_2,v_2\}}{2}$.

If the edge $\{u_1,v_1,v_2,a_1\}$ is red, then use it to embed $\{v_1,v_2\}$. 
Otherwise we know that $\{v_2,a_1\}$ and $\{u_2,a_1\}$ are the two bad pairs of $\{v_1,u_1\}$. 
It follows that the edge $\{v_1,u_1,u_2,x_1\}$ is still available and the edge $\{v_1,u_1,v_2,x_1\}$ was used to embed $x_1$ with one of $v_1$ or $u_1$ (without loss of generality, assume $v_1$).
In this case, embed $\{v_1,x_1\}$ in $\{v_1,u_1,u_2,x_1\}$ instead and use the edge $\{v_1,u_1,v_2,x_1\}$ to embed $\{v_1,v_2\}$. 
Now we will embed $\{v_1,u_2\}$ and $\{u_1,u_2\}$.
Let $E_{u_2} = \{\{v_1,u_1,u_2,y\}: y \notin \{v_1,u_1, v_2,  u_2\}\}$. Note that $|E_{u_2}| = t-3$ and at most $2$ edges in $E_{u_2}$ are blue. 
Hence at least $(t-3)-2\geq 2$ of the edges in $E_{u_2}$ are red.
For each red edge in $E_{u_2}$, if it was used, it was because there exists some bad pair of $\{v_1,u_1\}$ which did not use $u_2$.
That in turn implies that there are still at least $2$ edges in $E_{u_2}$ that are red and available.
Hence we can embed $\{v_1,u_2\}$ and $\{u_1,u_2\}$ into these two edges. 
Similarly we can find an edge of the form $\{v_2,u_1,u_2,y\}$ to embed $\{u_1,v_2\}$.

Finally, by counting the edges used, it is easy to check that there are still red edges of the form $\{v_1,u_1,x,y\}$ and $\{v_2,u_2,x,y\}$ available to embed both $\{v_1, u_1\}$ and $\{v_2,u_2\}$, since each pair is in at least $\binom{t-1}{2}-2$ red edges. \qedhere
\end{description}
\end{proof}

In the case of cliques of different sizes we have the following bounds which are trivial from Theorem~\ref{berge:4-uniform}.

\begin{proposition}
Suppose $t \ge s \geq 2$ and $t \ge 6$, then
\begin{displaymath}
t \le R^4(BK_t,BK_s) \le t+1.
\end{displaymath}
\end{proposition}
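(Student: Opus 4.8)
The plan is to derive both inequalities directly from Theorem~\ref{berge:4-uniform} together with an elementary monotonicity observation, so that essentially no new work is required. The key point is that whenever $s \le t$, every red $BK_t$ contains a red $BK_s$: if $U$ is the core of the red $BK_t$, one simply restricts to any $s$ vertices of $U$ and keeps the $\binom{s}{2}$ hyperedges assigned to the pairs inside this subset, which yields a red $BK_s$ as a subhypergraph. This is the only structural fact I need, and it is immediate from the definition of $B\h$.

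For the upper bound, I would argue as follows. By Theorem~\ref{berge:4-uniform}, since $t \ge 6$ we have $R^4(BK_t,BK_t) = t+1$, so every $2$-edge-coloring of the complete $4$-uniform hypergraph on $t+1$ vertices contains a monochromatic $BK_t$. Take any such coloring of $K^{(4)}_{t+1}$. If it contains a blue $BK_t$ we are already done; if instead it contains a red $BK_t$, then by the monotonicity observation above it contains a red $BK_s$. In either case we obtain a blue $BK_t$ or a red $BK_s$, which is exactly what is required, giving $R^4(BK_t,BK_s) \le t+1$.

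For the lower bound I would exhibit a trivial coloring. Color every hyperedge of $K^{(4)}_{t-1}$ blue. There is then no red hyperedge at all, hence no red $BK_s$, and there cannot be a blue $BK_t$ because any $BK_t$ has a core of $t$ distinct vertices, while only $t-1$ vertices are available. Thus this coloring avoids both a blue $BK_t$ and a red $BK_s$, which shows $R^4(BK_t,BK_s) \ge t$.

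There is no genuine obstacle here; the only point requiring a word of justification is the containment of a red $BK_s$ inside a red $BK_t$, and even that reduces to restricting the core and the associated hyperedge assignment. I would therefore keep the writeup to a couple of sentences, citing Theorem~\ref{berge:4-uniform} for the value $R^4(BK_t,BK_t)=t+1$ when $t\ge 6$ and noting the vertex-count obstruction for the lower bound.
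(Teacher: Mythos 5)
Your proposal is correct and takes essentially the same route as the paper: the upper bound via the monotonicity $R^4(BK_t,BK_s)\le R^4(BK_t,BK_t)=t+1$ (Theorem~\ref{berge:4-uniform}), and the lower bound via the all-blue coloring of $K^{(4)}_{t-1}$, which has no red hyperedge and too few vertices for a core of size $t$. The only difference is that you make explicit the (immediate) observation that a red $BK_t$ restricts to a red $BK_s$ for $s\le t$, which the paper leaves implicit.
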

\begin{proof}
The construction is trivial, we just take a clique on $t-1$ vertices.  The upper bound follows since $s \le t$ implies $R^4(BK_t,BK_s) \le R^4(BK_t,BK_t)$.
\end{proof}
For $s=t-1$ we obtain the same bound as the case $s=t$.
\begin{proposition}
$R^4(BK_t,BK_{t-1}) = t+1$ for $t\geq 6$.
\end{proposition}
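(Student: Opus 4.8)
The plan is to establish the two matching bounds separately. For the upper bound I would not construct anything new but instead reduce to the diagonal case already proved: a red $BK_t$ always contains a red $BK_{t-1}$ (delete one core vertex together with the $t-1$ Berge-edges representing the pairs incident to it, leaving $\binom{t-1}{2}$ edges that witness a Berge-$K_{t-1}$ on the remaining $t-1$ core vertices). Consequently any $2$-coloring possessing a monochromatic $BK_t$ possesses either a blue $BK_t$ or a red $BK_{t-1}$, so $R^4(BK_t,BK_{t-1})\le R^4(BK_t,BK_t)=t+1$ for $t\ge 6$ by Theorem~\ref{berge:4-uniform}.

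The substance is the lower bound, for which I would exhibit a $2$-coloring of $K_t^{(4)}$ on exactly $t$ vertices containing neither a blue $BK_t$ nor a red $BK_{t-1}$. Fix two vertices $p,q$; color \emph{red} every $4$-edge that contains both $p$ and $q$, and color all remaining edges \emph{blue}. The crucial point is that since there are only $t$ vertices in total, the core of any blue $BK_t$ is forced to be the entire vertex set, and therefore the pair $\{p,q\}$ would have to be represented by a blue edge containing both $p$ and $q$. By construction no such blue edge exists, so there is no blue $BK_t$. To rule out a red $BK_{t-1}$ I would switch to a pure counting argument: the red edges are exactly the $4$-edges through $\{p,q\}$, of which there are $\binom{t-2}{2}$, whereas a $BK_{t-1}$ requires $\binom{t-1}{2}$ distinct hyperedges; since $\binom{t-1}{2}-\binom{t-2}{2}=t-2>0$, there are strictly too few red edges. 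Combining the two directions yields $R^4(BK_t,BK_{t-1})=t+1$.

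The main point to appreciate — and the reason a single red-saturated pair succeeds where the two-special-vertex coloring of Proposition~\ref{4-uniform-lower} does not — is that this one pair does double duty. Structurally it obstructs the blue $BK_t$, because the forced full core means a single uncoverable pair already kills the blue Berge-clique; simultaneously it keeps red so sparse that the red side is defeated by nothing more than an edge count. I therefore expect the only thing to verify with any care to be this accounting, namely the strict inequality $\binom{t-2}{2}<\binom{t-1}{2}$ together with the observation that the core of a blue $BK_t$ must exhaust all $t$ vertices; both are elementary, so the whole argument should remain short.
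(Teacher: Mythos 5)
Your proof is correct and takes essentially the same approach as the paper: the upper bound via monotonicity $R^4(BK_t,BK_{t-1}) \le R^4(BK_t,BK_t) = t+1$ (Theorem~\ref{berge:4-uniform}), and the lower bound via the same two-special-vertex construction used in Proposition~\ref{4-uniform-lower}. Your write-up is in fact slightly more careful than the paper's one-line proof, since it makes explicit the one choice that matters: the edges through the special pair must receive the $BK_{t-1}$-color (so that the count $\binom{t-2}{2} < \binom{t-1}{2}$ applies on that side), while the structural argument that a $BK_t$-core on $t$ vertices must contain both special vertices kills the other color.
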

\begin{proof}
The same construction works as the $R^4(BK_t,BK_{t})$ case, and the upper bound follows from $R^4(BK_t,BK_{t-1})\le R^4(BK_t,BK_{t})$.
\end{proof}

%For $t-s \ge 7$, the lower bound is sharp.

\begin{theorem}
Assume $2 \le s \le t-2,$ and $t\geq 34$, then $R^4(BK_t,BK_{s}) = t.$
\end{theorem}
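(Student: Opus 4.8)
The lower bound is immediate: color every hyperedge of $K^{(4)}_{t-1}$ blue. A blue $BK_t$ needs $t$ core vertices, which are unavailable, and a red $BK_s$ needs at least one red hyperedge (as $s\ge 2$), of which there are none; hence $R^4(BK_t,BK_s)\ge t$. For the upper bound it suffices to treat the case $s=t-2$: if a $2$-coloring of $K^{(4)}_t$ has no red $BK_s$ for some $s\le t-2$, then it has no red $BK_{t-2}$ either, since restricting any red $BK_{t-2}$ to $s$ of its core vertices (together with the corresponding red hyperedges) produces a red $BK_s$. I would therefore assume $s=t-2$ and prove that any $2$-colored $K^{(4)}_t$ with no red $BK_{t-2}$ contains a (necessarily spanning) blue $BK_t$.

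The engine is the already-established value $R^4(BK_{t-2},BK_{t-2})=t-1$ (Theorem~\ref{berge:4-uniform}, valid since $t\ge 34$). Applied to the $t-1$ vertices of $V\setminus\{v\}$ for a fixed $v$, it yields a monochromatic $BK_{t-2}$ avoiding $v$; this copy must be blue, as a red one would be a red $BK_{t-2}$. Writing $V=Z\sqcup\{v,w\}$ with $Z$ the core of this blue $BK_{t-2}$, the task becomes to adjoin the two remaining vertices $v,w$ to the core. The plan is to do this by a single deficiency--Hall argument in the spirit of Claim~\ref{cl:almost-blue}: the copy already supplies distinct blue hyperedges for all $\binom{t-2}{2}$ pairs inside $Z$, and it remains to extend this system of distinct representatives to the $2(t-2)+1$ new pairs $\{v,z\},\{w,z\}$ for $z\in Z$ and $\{v,w\}$. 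If the extension succeeds we obtain the blue $BK_t$; if Hall's condition fails, the deficient set should localize as a family of pairs $\{v,z\}$ (or $\{w,z\}$) each lying in at most two blue hyperedges, i.e. as \emph{red couples} in the terminology of Lemma~\ref{4-uniform-upper}.

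It then remains to show that such red couples cannot coexist with the absence of a red $BK_{t-2}$. A red couple $\{x,y\}$ lies in at least $\binom{t-2}{2}-2$ red hyperedges, so a vertex $x$ forming a red couple with every vertex of a large set $Z'$ is the centre of an enormous, highly structured family of red $4$-edges: for $z,z'\in Z'$ the edge $\{x,z,z',b\}$ is red for all but at most two choices of $b$ and covers the pair $\{z,z'\}$, while $\{x,z,c,d\}$ covers $\{x,z\}$. Assembling these into a red $BK_{t-2}$ on the core $\{x\}\cup Z'$ is then a distinct-representatives problem that Hall resolves, because the red $4$-edges through $x$ vastly outnumber the $\binom{t-2}{2}$ pairs to be represented; the explicit three-phase embedding of Lemma~\ref{4-uniform-upper} can be adapted to build it. The hypothesis $t\ge 34$ is exactly what makes all the relevant counting inequalities (the surplus of available red edges over pairs, and the $\lceil(t-6)/2\rceil$-type margins that control conflicts between phases) hold with room to spare.

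The main obstacle, and the step I expect to require the most care, is precisely the passage from ``Hall fails for the two-vertex extension'' to ``there is a red $BK_{t-2}$.'' One must argue that the blue-deficient set is forced to be large and concentrated at a single new vertex, so that one genuinely recovers red couples $\{v,z\}$ for $z$ ranging over almost all of $Z$ rather than a few scattered bad pairs; and one must then carry out the distinct-representatives bookkeeping for the red target while respecting the at-most-two blue exceptions attached to each couple. Controlling which vertices are driven into the deficient core, while simultaneously keeping enough slack to route every red pair to its own red hyperedge, is where the argument is most delicate --- the analogue here of the intricate Phase~1--Phase~3 analysis already performed for the diagonal case in Lemma~\ref{4-uniform-upper}.
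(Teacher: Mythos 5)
Your lower bound and the reduction to $s=t-2$ are fine, and initializing with the diagonal value $R^4(BK_{t-2},BK_{t-2})=t-1$ to produce a blue $BK_{t-2}$ on a core $Z$ with $V=Z\sqcup\{v,w\}$ is legitimate (and genuinely different from the paper, which never invokes the diagonal result here). But the core of your argument has a real gap, and you have located it yourself without closing it. Failure of Hall's condition for the extension problem does \emph{not} force the blue-deficient set to be ``large and concentrated at a single new vertex'': since each $4$-edge contains at most $5$ of the new pairs, the defect-counting argument (as in Claim~\ref{cl:almost-blue}) yields only \emph{one} pair of low blue degree --- a deficient set can consist of a single pair with no available blue hyperedge --- and nothing prevents every other new pair from having enormous blue degree. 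So you obtain one red couple, not red couples $\{v,z\}$ for almost all $z\in Z$, and the Hall argument you describe for assembling a red $BK_{t-2}$ (which is correct \emph{if} one has such a spanning family of red couples at a single vertex) cannot be started. There is also a secondary defect: for pairs $\{w,z\}$, low \emph{available} degree does not imply low blue degree, because the inner blue $BK_{t-2}$ was found in $V\setminus\{v\}$ and may already consume blue hyperedges containing both $w$ and $z$.

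The gap matters because a single red couple is genuinely insufficient. A red couple $\{x,y\}$ with up to four bad pairs yields a red $BK_{t-2}$ only if each bad pair can be routed through some other red hyperedge; a bad pair may lie in \emph{no} red hyperedge at all, i.e.\ it is a blue couple with no bad pairs. And one blue couple cannot produce a blue $BK_t$, since the at most $\binom{t-2}{2}$ hyperedges through it are strictly fewer than the $\binom{t}{2}$ pairs needing distinct representatives; one needs \emph{two} blue couples plus the three-phase embedding. This is exactly what forces the alternating structure of the paper's proof: with at most one blue couple, Hall gives a red $BK_{t-1}$ directly; with two blue couples, Claim~\ref{cl3} gives either a blue $BK_t$ or two red couples with at most $4$ bad pairs; Claim~\ref{cl4} then gives either a red $BK_{t-2}$ or two blue couples with at most one bad pair; and Observation~\ref{finalobs} finishes. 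Your proposal compresses all of this into ``the three-phase embedding of Lemma~\ref{4-uniform-upper} can be adapted,'' but that adaptation --- the bad-pair bookkeeping that drives the alternation between colors --- \emph{is} the proof; without it your outline does not go through.
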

\begin{proof}

%%%%%%%%%%%%%%%%%%%%%%%%%%%%%%%%%%%%%%%%
In a red-blue coloring of a hypergraph $\h$, given a pair of vertices $\{v,u\}$, we define its blue degree to be $d_B(\{v,u\}) = \abs{h\in E(\h):\{v,u\}\subseteq h \mbox{ and $h$ is blue}\}}$.  The red degree $d_R(\{v,u\})$ is defined analogously. Let 
\begin{displaymath}
\delta_B^2 = \min_{\{v,u\}\in \binom{V(\h)}{2}}d_B(\{v,u\}),
\end{displaymath}
and define $\delta_R^2$ similarly.  

%If one of $\delta_B^2$ or $\delta_R^2$ is at least 6, say $\delta_B^2$, then a simple application of Hall's theorem yields an injection $\phi$ from $\binom{V(\h)}{2}$ to the set of blue hyperedges such that for all $\{v,u\}$, $\{v,u\} \subset \phi(\{v,u\})$. Using this injection we immediately find a monochromatic $BK_t$. Thus, we will assume $\delta_B^2, \delta_R^2 \le 5$.
%Maybe we can delete above paragraph -Oscar

Call $\{v,u\}$ a $c$ couple, $c\in \{blue,red\}$, if all but at most 5 of the hyperedges $\{v,u,x,y\}$ are $c$ colored, and also call a pair $\{x,y\}$ a bad pair of the $c$ couple $\{v,u\}$ if the hyperedge $\{v,u,x,y\}$ is not colored $c$.

Note that if $\delta_B^2=0$ then we can find a pair $\{v,u\}$ such that $\{v,u,x,y\}$ is red for all $x,y$, and therefore there is a red $BK_{t-2}$. So we can assume $\delta_B^2 \geq 1$.

%Case 2: Note that if there exists two vertices such that any blue couple is incident with one of these, then we can find a red $K_{t-2}$ with core in the other $t-2$ vertices by Hall's Theorem, since any pair not using any of these two would be in at least 6 red edges

\begin{claim} \label{cl3} Suppose there are two blue couples, then either we can find a blue $BK_t$ or we can find two red couples such that each have at most 4 bad pairs.\end{claim}

%%%%%

\begin{proof}
Assume we have two disjoint blue couples $\{u_1,v_1\}$ and $\{u_2,v_2\}$, the case where these pairs are not disjoint is similar and simpler,  and enumerate the other $t-4$ vertices as $x_1,x_2,\dots,x_{t-4}.$  
Now let us do a preliminary embedding, for $i,j\in [t-4]$ use $\{u_1,v_1,x_i,x_j\}$ to embed  $\{x_i,x_j\}$ when $i+j$ is odd and $\{u_2,v_2,x_i,x_j\}$ otherwise. 
If $i+j$ is odd and in this part of the embedding we used a red edge $\{u_1,v_1,x_i,x_j\}$ to embed $\{x_i,x_j\}$, but the edge $\{u_2,v_2,x_i,x_j\}$ is blue, then use the edge $\{u_2,v_2,x_i,x_j\}$ instead. If $i+j$ is even and in this part of the embedding we used a red edge $\{u_2,v_2,x_i,x_j\}$ to embed $\{x_i,x_j\}$, but the edge $\{u_1,v_1,x_i,x_j\}$ is blue, then use the edge $\{u_1,v_1,x_i,x_j\}$ instead.  Let us call such a change to the embedding a swap.  If both edges $\{u_1,v_1,x_i,x_j\}$ and $\{u_2,v_2,x_i,x_j\}$ are red or blue, then we do not change anything.

Note that at this point we have embedded a $BK_{t-4}$ such that every edge is blue except at most five edges, in particular the possible pairs which are simultaneously bad pairs of $\{u_1,v_1\}$ and $\{u_2,v_2\}$.

Let $e_1,e_2,\dots,e_k$ be these common bad pairs, $k \leq 5$. We begin with a simple observation which we will use again later.
\begin{observation} \label{finalobs}
If $k \le 1$ we could complete the embedding in such a way that each pair is contained in at least 1 blue edge.
\end{observation}
If $k \geq 2$ and all but at most one $e_i$ is in at least 5 blue edges, then we can greedily embed the edges, starting from the one that is in less than 5 blue edges, since each is in at least one unused blue edge. So we can either find two  of the $e_i$ which are in at most 4 blue edges and the claim is proven or we complete the embedding of a blue $BK_{t-4}$, and if that is the case we will see we can complete this embedding to a blue $BK_{t}.$

%If $k\geq 5$ consider the bipartite graph $G$ with parts $X = \{e_1,e_2,\dots,e_k\}$ and $Y = \binom{V}{4}$ where $e$ is connected to $h$ iff $h$ is a blue edge containing $e$. 

%\begin{lemma}
%Either there are two edges $e,e' \in X$ with degree at most 4 or we can complete the embedding to a blue $BK_t$.
%\end{lemma}
%\begin{proof} 
%Suppose $d_G(e_1)\leq d_G(e_2) \leq \dots \leq d_G(e_k)$, and suppose $d_G(e_2) \geq 5$. Note that there is at most one $h \in Y$ with $d_G(h) \geq 5$ ($h$ is connected to all $e_i$ if this are define in the four vertices of $h$), and remember that $d_G(e_i) \geq 1$, so match $e_1$ with $h$ if exists or with any other edge if it doesn't exists. Now remove $e_1$ and its pair from $G$, and let the new graph be $G'$ with components $X'$ and $Y'$, now the minimal degree of $X'$ is at least 4, and the maximal degree of $Y'$ is at most 4 so we can complete the matching.
%\end{proof}

Since for any fixed $i$, there are at most $\lceil \frac{t-4}{2} \rceil$ indices $j$ such that $i+j$ is odd and also $x_i$ can be in at most 10 bad pairs of $\{u_1,v_1\}$ or $\{u_2,v_2\}$, it follows that for every $i\in [t-4]$ there are at least $t-5 - \lceil \frac{t-4}{2} \rceil - 10 \geq 4$ values of $j \in [t-4]$ not used in the previous steps of the embedding such that the edge $\{u_1,v_1,x_i,x_j\}$ is blue. Then again by Hall's Theorem in the incidence graph with components $X=\{\{x_i,v_2\}:i\in [t-4]\} \cup \{\{x_i,u_2\}:i \in [t-4]\}$ and $Y$ the set of blue edges in $\{\{x_i,x_j,u_2,v_2\}: 1\leq i < j \leq t-4\}$, we can find an embedding of the edges $\{x_i,v_2\}$ and $\{x_i,u_2\}$ for $i \in [t-4]$, and similarly we can find an embedding of the edges $\{x_i,v_1\}$ and $\{x_i,u_1\}$ for $i \in [t-4]$.

%Use $\{x_i,x_{i+1},v_1,u_1\}$ to embed $\{x_i,v_2\}$, $\{x_i,x_{i+3},v_1,u_1\}$ to embed $\{x_i,u_2\}$, $\{x_i,x_{i+2},v_1,u_1\}$ to embed $\{x_i,v_1\}$,  $\{x_i,x_{i+4},v_1,u_1\}$ to embed $\{x_i,u_1\}$. 
We have not yet used the hyperedges of the form $\{v_1,u_1,v_2,y\}$; there are at least $t-8 \geq 26$ of these which are blue, and we can use them to embed $\{v_1,u_1\}, \{v_1,v_2\}$ and $\{u_1,v_2\}$. Similarly we can embed $\{v_2,u_2\},\{u_1,u_2\}$ and $\{u_1,u_2\}$.
Therefore either we can complete the matching or we find two pairs $e_1,e_2$ which are red couples, with at most 4 bad pairs. This completes the proof of Claim~\ref{cl3}.
\end{proof}

\begin{claim}\label{cl4}
Suppose there are two red couples such that at least one has at most 4 bad pairs, then either we can find a red $BK_{t-2}$ or we can find two blue couples such that each have at most 1 bad pair.\end{claim}

\begin{proof}
Again we will assume the red couples are disjoint. Let $\{u_1,v_1\}$ and $\{u_2,v_2\}$ be couples such that $\{u_1,v_1\}$ has at most 4 bad pairs, and let $\{a_1,b_1\},\{a_2,b_2\},\{a_3,b_3\},\{a_4,b_4\}$ be the bad pairs of $\{u_1,v_1\}$.  Suppose these pairs are arranged by their red degree in increasing order. Now let $x_1,x_2,\dots,x_{t-6}$ be an enumeration of the set $V'= V\backslash \{v_1,v_2,u_1,u_2,a_1,a_2\}$. Let us consider the following embedding which is similar to the one used in the previous claim:
For $i,j\in [t-6]$ use $\{u_1,v_1,x_i,x_j\}$ to embed  $\{x_i,x_j\}$ when $i+j$ is odd and $\{u_2,v_2,x_i,x_j\}$ otherwise. 
Similarly as in Claim~\ref{cl3}, if we encounter a bad pair of one couple but not the other, then we can change the embedding to use more red edges, and at the end we have an embedding of a $BK_{t-6}$ with almost every edge red, the only possible exceptions are the common bad pairs of $\{u_1,v_1\}$ and $\{u_2,v_2\}$ in $V'$. Hence here we have at most two ($\{a_3,b_3\}$ and $\{a_4,b_4\}$). If the red degree of these edges is at least 2, then we can greedily embed these two in these pairs to complete a red clique on $V'$. Otherwise one of these, and by the ordering also $\{a_1,b_1\}$ and $\{a_2,b_2\}$, will be in at most 1 red pair.

Similarly as in the proof of Claim~\ref{cl3}, we use Hall's theorem to embed $\{x_i,v_2\}$, $\{x_i,u_2\}$, $\{x_i,v_1\}$ and $\{x_i,u_1\}$ for $i \in [t-6]$ (here the number  $t-5 - \lceil \frac{t-4}{2} \rceil - 10$ is replaced by $t-7 - \lceil \frac{t-6}{2} \rceil - 8$, which is at least 5).

%Since for any fix $i$, there are at most $\lceil \frac{t-6}{2} \rceil$ index $j$ such that $i+j$ is odd, also $x_i$ can be part of at most 2 bad pairs of $\{u_1,v_1\}$ and $x_i$ could be used to fix at most 4 bad pairs of $\{u_2,v_2\}$, then for every $i\in [t-6]$ there are at least $t-7 - \lceil \frac{t-6}{2} \rceil - 6 \geq 8$ values of $j \in [t-6]$ such that the edge $\{u_1,v_1,x_i,x_j\}$ is red  and again by Hall's Theorem in the incident graph with components $X=\{\{x_i,v_2\}:i\in [t-6]\} \cup \{\{x_i,u_2\}:i \in [t-6]\}$ and $Y = \{\{x_i,x_j,u_2,v_2\}: 1\leq i < j \leq t-6\}$, we can find and embedding of the edges $\{x_i,v_2\}$ and $\{x_i,u_2\}$ for $i \in [t-6]$, and similarly we can find an embedding of the edges $\{x_i,v_1\}$ and $\{x_i,u_1\}$ for $i \in [t-6]$

%Use $\{x_i,x_{i+1},v_2,u_2\}$ to embed $\{x_i,v_2\}$, $\{x_i,x_{i+3},v_2,u_2\}$ to embed $\{x_i,u_2\}$, $\{x_i,x_{i+2},v_1,u_1\}$ to embed $\{x_i,v_1\}$,  $\{x_i,x_{i+4},v_1,u_1\}$ to embed $\{x_i,u_1\}$. In this last part, when $j \in \{i+1,i+2,i+3,i+4\}$ is bigger than $t-6$ then replace it by $j-t-4$ when $t-6$ is even and $j-t-6+1$ when this number is odd (so no edge is picked twice). 
%Note that if a bad pair appear at this point, since $t-6 \geq  14$, we can use pairs $\{x_i,x_{i+2k},u_1,v_1\},\{x_i,x_{i+2k-1},u_2,v_2\}$ for $k=3,4,5,6$  to replace them, note that this let us also avoid using $\{a_3,b_3,u_1,v_1\}$ or $\{a_4,b_4,u_1,v_1\}$.
Since $\{v_1,u_1,v_2,y\}$ is red for at least $t-7\geq 29$, and these hyperedges have not been used yet, it follows that we have enough hyperedges to embed $\{v_1,u_1\},\{v_1,v_2\}$ and $\{u_1,v_2\}$ and similarly we can embed $\{v_2,u_2\},\{v_1,u_2\}$ and $\{u_1,u_2\}$.
\end{proof}

Note that if there is at most one blue couple, say $\{v,u\}$, we may put $V' = V\backslash\{u\}$ and for every pair $x,y \in V'$ the red degree of $\{x,y\}$ is at least 6. Then by Hall's Theorem, we can find a red $BK_{t-1}$. So we can assume there are at least two blue couples. Thus, by Claim~\ref{cl3} either we find a blue $BK_{t}$ or we have two red couples such that at least one has at most 4 bad pairs, the conditions of  Claim~\ref{cl4}. From here we either find a red $BK_{t-2}$ or satisfy conditions stronger than those of Claim~\ref{cl3}. In this case, there is at most one shared bad pair and so we would be able to find a blue $BK_{t}$ by Observation~\ref{finalobs}. 
\end{proof}

\begin{remark}
Instead of using Hall's Theorem in the second part of the embedding on the previous claims, if we use a more complicated case analysis the constraint $t \ge 34$ can be relaxed somewhat, but we elected not to in order to make the proof easier to follow.
\end{remark}

\subsection{Proof of Theorem \ref{berge:5-uniform}}

In this short section, we will show that $R^k(BK_t,BK_t) = t$ when t is sufficiently large. 

\begin{claim}\label{cl:min-red-deg}
If for all $v,u \in V$, there are at least $\binom{k}{2}$ red distinct hyperedges containing both $v$ and $u$, then $\h$ contains a red $BK_t$. 
 \end{claim}
\begin{proof}
Consider the bipartite graph $G$ with vertex set $V(G) = A \sqcup B$, where $A = \binom{V(\h)}{2}$ and $B$ is the set of all hyperedges of $\h$. For $a \in A$, $h \in B$, $a$ is adjacent to $h$ in $G$ if and only if $a \subset h$ and $h$ is colored red in $\h$. Note that for every $h\in B$, $d_G(h) \leq \binom{k}{2}$.
Hence, if for all $\{v,u\} \in A$, $d_G(\{v,u\}) \geq \binom{k}{2}$, then by Hall's theorem we have a matching of $A$ in $G$, which implies the existence of a red $BK_t$ in $\h$.
\end{proof}

\begin{claim}\label{cl:k-uniform}
If $\binom{t-4}{k-4} \geq 2 \binom{k}{2} -1$, then $R^k(BK_t,BK_t) \leq t$.  
\end{claim}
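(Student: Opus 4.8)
The plan is to reduce the statement to Claim~\ref{cl:min-red-deg} together with its colour-swapped counterpart, and then to use the hypothesis $\binom{t-4}{k-4}\ge 2\binom{k}{2}-1$ to eliminate the single remaining obstruction by a double-counting argument. For a pair $\{u,v\}$, write $d_R(\{u,v\})$ (resp. $d_B(\{u,v\})$) for the number of red (resp. blue) hyperedges containing it. First I would take an arbitrary $2$-colouring of the complete $k$-uniform hypergraph on exactly $t$ vertices. If every pair satisfies $d_R\ge\binom{k}{2}$, then Claim~\ref{cl:min-red-deg} yields a red $BK_t$ and we are done; running the identical argument with the colours exchanged, if every pair satisfies $d_B\ge\binom{k}{2}$ we obtain a blue $BK_t$. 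Hence I may assume there is a pair $P$ with $d_R(P)\le \binom{k}{2}-1$ and a pair $Q$ with $d_B(Q)\le \binom{k}{2}-1$, and the whole task becomes deriving a contradiction from the existence of these two ``bad'' pairs.

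Next I would record that $P\ne Q$. Every pair is contained in exactly $\binom{t-2}{k-2}$ hyperedges, and since $\binom{t-2}{k-2}\ge\binom{t-4}{k-4}\ge 2\binom{k}{2}-1 > 2\binom{k}{2}-2$, a single pair cannot have both its red degree and its blue degree bounded by $\binom{k}{2}-1$. So $P$ and $Q$ are genuinely distinct pairs, and their union $W:=P\cup Q$ satisfies $|W|\in\{3,4\}$.

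The crux is then a count of the hyperedges containing all of $W$. There are exactly $\binom{t-|W|}{k-|W|}$ of them, and each one contains both $P$ and $Q$. Any red such hyperedge is one of the red hyperedges through $P$, of which there are at most $\binom{k}{2}-1$; any blue such hyperedge is one of the blue hyperedges through $Q$, of which there are at most $\binom{k}{2}-1$. Therefore $\binom{t-|W|}{k-|W|}\le 2\binom{k}{2}-2$. On the other hand, by Pascal's identity $\binom{t-3}{k-3}=\binom{t-4}{k-4}+\binom{t-4}{k-3}\ge\binom{t-4}{k-4}$, so in either case $\binom{t-|W|}{k-|W|}\ge\binom{t-4}{k-4}\ge 2\binom{k}{2}-1$, contradicting the previous inequality. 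This contradiction shows the bad case never occurs, so a monochromatic $BK_t$ always appears and $R^k(BK_t,BK_t)\le t$.

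I expect the only genuinely delicate point to be that $P$ and $Q$ need not be disjoint: when they share a vertex we have $|W|=3$ and the governing count is $\binom{t-3}{k-3}$ rather than $\binom{t-4}{k-4}$. This overlapping case is actually the easier one, since $\binom{t-3}{k-3}$ is the larger quantity; it is the disjoint case $|W|=4$ that is binding, which is precisely why the hypothesis is stated in terms of $\binom{t-4}{k-4}$. The rest is bookkeeping, and no further estimate beyond the elementary monotonicity of the binomial coefficients is needed.
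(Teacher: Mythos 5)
Your argument is correct, but it takes a genuinely different route from the paper at the decisive step. Both proofs start the same way: if every pair has red degree at least $\binom{k}{2}$, Claim~\ref{cl:min-red-deg} yields a red $BK_t$. The paper then handles the failure of this condition \emph{constructively}: given a single pair $\{u,v\}$ with at most $\binom{k}{2}-1$ red hyperedges through it, it builds a blue $BK_t$ all of whose hyperedges contain both $u$ and $v$, by a second application of Hall's theorem to the bipartite graph between all pairs and the blue hyperedges through $\{u,v\}$ (each pair lies in at least $\binom{t-4}{k-4}-\binom{k}{2}+1\ge\binom{k}{2}$ such blue hyperedges, while each hyperedge covers at most $\binom{k}{2}$ pairs). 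You instead invoke the colour-swapped form of Claim~\ref{cl:min-red-deg} and rule out the remaining case by showing a red-deficient pair $P$ and a blue-deficient pair $Q$ cannot coexist: the $\binom{t-\abs{W}}{k-\abs{W}}$ hyperedges through $W=P\cup Q$ split into red ones through $P$ and blue ones through $Q$, giving at most $2\binom{k}{2}-2$, against the lower bound $\binom{t-4}{k-4}\ge 2\binom{k}{2}-1$; your remark that the disjoint case $\abs{W}=4$ is the binding one is exactly where the paper's hypothesis enters, and the arithmetic is the same as in the paper's estimate. What your route buys is brevity and symmetry: one Hall-type lemma used twice plus a two-line double count, with no second matching argument. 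What the paper's route buys is localization: it exhibits the monochromatic $BK_t$ concretely, with every hyperedge passing through the single deficient pair, whereas your proof is a pure contradiction and does not indicate where the clique lives. Both proofs are complete and correct.
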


\begin{proof}

If the condition in Claim \ref{cl:min-red-deg} does not hold, then there exist two vertices $v, u \in V(\h)$ such that all but at most $\binom{k}{2}-1$ hyperedges containing both $v$ and $u$ are blue. We claim that there exists a copy of a blue $BK_t$ in $\h$ using only blue hyperedges containing both $v$ and $u$.
Consider again the bipartite graph $G$ with vertex set $V(G)= A \sqcup B$, where $A = \binom{V(\h)}{2}$ and $B$ is the set of blue hyperedges of $\h$ containing both $v$ and $u$. Note that for every $a\in A$ there are at least $\binom{t-4}{k-4} - \binom {k}{2}+1 \geq \binom{k}{2}$ blue hyperedges containing $a$, and again by Hall's theorem we have a blue~$BK_t$.
\end{proof}

Using Claim \ref{cl:k-uniform}, we show that $R^k(BK_t, BK_t) = t$ when $k\geq 5$ and $t$ sufficiently large. We did not make an attempt to find the best possible constant.
\begin{corollary} We have

\begin{enumerate}[label=\rm{(\arabic*)}]
\item $R^5(BK_t,BK_t) = t$ when $t\geq 23$.
\item $R^6(BK_t,BK_t) = t$ when $t\geq 13$.
\item $R^7(BK_t,BK_t) = t$ when $t\geq 12$.
\item $R^{k}(BK_t,BK_t) = t$ when $k \in \{8,9,10\}$ and $t \geq k+4$.
\item $R^k(BK_t,BK_t) = t$ when $k \geq 11$ and $t \geq k+3$.
\end{enumerate}
\end{corollary}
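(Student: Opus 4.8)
The plan is to derive each of the five parts as a direct arithmetic consequence of Claim~\ref{cl:k-uniform}, which already reduces everything to checking a single inequality. Recall that Claim~\ref{cl:k-uniform} gives the upper bound $R^k(BK_t,BK_t) \le t$ whenever $\binom{t-4}{k-4} \ge 2\binom{k}{2}-1$; the matching lower bound $R^k(BK_t,BK_t) \ge t$ is trivial, since a complete $K^{(k)}_{t-1}$ contains no $BK_t$ at all (a Berge-$K_t$ requires $t$ distinct core vertices). So for each part it suffices to verify that the stated range of $t$ forces the binomial inequality, and then equality follows immediately.

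The execution is thus purely numerical. For each fixed uniformity $k$ I would set $m = t-4$ and $d = k-4$, so the condition becomes $\binom{m}{d} \ge 2\binom{k}{2}-1 = k(k-1)-1$, and then solve for the smallest admissible $m$ (equivalently $t$). For part~(1), $k=5$ gives $d=1$, so the left side is simply $m = t-4$, and the requirement $t-4 \ge 2\binom{5}{2}-1 = 19$ yields $t \ge 23$. For part~(2), $k=6$ gives $d=2$, so I would solve $\binom{t-4}{2} \ge 2\binom{6}{2}-1 = 29$; the smallest $t$ with $\binom{t-4}{2}\ge 29$ is $t=13$ (since $\binom{9}{2}=36\ge 29$ while $\binom{8}{2}=28<29$). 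For part~(3), $k=7$ gives $d=3$ and threshold $2\binom{7}{2}-1 = 41$, and $\binom{t-4}{3}\ge 41$ first holds at $t=12$ (as $\binom{8}{3}=56$). Parts~(4) and~(5) cover $k\ge 8$, where the binomial coefficient $\binom{t-4}{k-4}$ grows fast enough that even $t=k+4$ (giving $\binom{8}{k-4}$) or $t=k+3$ (giving $\binom{7}{k-4}$) already dominates the quadratic threshold $k(k-1)-1$; I would check the boundary cases $t=k+4$ for $k\in\{8,9,10\}$ and $t=k+3$ for $k\ge 11$ by noting that the central-region binomial $\binom{7}{k-4}$ or $\binom{8}{k-4}$ exceeds $k(k-1)-1$ in exactly these ranges, and that increasing $t$ only increases the left side while $k$ is fixed.

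The only genuine subtlety — which is more bookkeeping than obstacle — is confirming that the listed thresholds are the \emph{smallest} valid ones, i.e.\ that the stated bound on $t$ is tight against the inequality rather than merely sufficient, and that for the higher-$k$ families the monotonicity of $\binom{t-4}{k-4}$ in $t$ lets a single boundary check propagate to all larger $t$. For the families in~(4) and~(5) I also need to confirm the stated starting value of $t$ really makes $\binom{t-4}{k-4}\ge k(k-1)-1$ hold for every $k$ in the range simultaneously, which I would do by observing that for $k\ge 8$ the quantity $\binom{k+3-4}{k-4}=\binom{k-1}{k-4}=\binom{k-1}{3}$ already grows cubically in $k$ and overtakes the quadratic threshold once $k\ge 11$, while the slightly larger offset $t\ge k+4$ handles the three remaining small cases $k\in\{8,9,10\}$ where $\binom{k-1}{3}$ has not yet caught up. No new combinatorial idea is required beyond Claim~\ref{cl:k-uniform} and the trivial lower bound; the whole corollary is a table of inequality checks.
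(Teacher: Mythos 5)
Your approach is exactly the paper's: the corollary is stated there with no written proof beyond the sentence ``Using Claim~\ref{cl:k-uniform}, \dots'', i.e.\ it is an immediate numerical consequence of the sufficient condition $\binom{t-4}{k-4}\ge 2\binom{k}{2}-1$ together with the trivial lower bound $R^k(BK_t,BK_t)\ge t$ (no $BK_t$ fits on $t-1$ vertices), and your checks for parts (1)--(3) and (5) are correct: $t-4\ge 19$, $\binom{9}{2}=36\ge 29$, $\binom{8}{3}=56\ge 41$, and $\binom{k-1}{3}\ge k(k-1)-1$ for all $k\ge 11$. One slip to fix: in your discussion of parts (4) and (5) you assert that $t=k+4$ gives $\binom{8}{k-4}$ and $t=k+3$ gives $\binom{7}{k-4}$; the correct substitutions are $\binom{t-4}{k-4}=\binom{k}{k-4}=\binom{k}{4}$ and $\binom{k-1}{k-4}=\binom{k-1}{3}$ respectively (the latter you do compute correctly at the end). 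This is not cosmetic: taken literally, $\binom{8}{k-4}$ fails the threshold for $k=9,10$ (e.g.\ $\binom{8}{5}=56<71$ and $\binom{8}{6}=28<89$), whereas the correct boundary values $\binom{8}{4}=70\ge 55$, $\binom{9}{5}=126\ge 71$, $\binom{10}{6}=210\ge 89$ do confirm part (4). With that substitution corrected, the whole corollary is, as you say, a table of inequality checks plus monotonicity of $\binom{t-4}{k-4}$ in $t$ for fixed $k$.
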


\begin{remark}
Note that for $k\geq 11$, this result is sharp since for $t = k+2$ we have that $\binom{t}{r}\leq 2\binom{t}{2} -2.$ Hence $R^k(BK_t,BK_t) \geq r+3.$
\end{remark}

%----------------------------
\subsection{Superlinear lower bounds for sufficiently many colors} \label{superlinear}
In this subsection we show that for all uniformities and for sufficiently many colors, the Ramsey number for a Berge $t$-clique is superlinear. We start with the case $r=3$. 

\begin{claim}\label{cl:sup-base}
For any $\epsilon<1$ we have $R^{3}_3(BK_t,BK_t,BK_t)\ge (t-1)t^{\epsilon}$ for $t$ sufficiently large.
\end{claim}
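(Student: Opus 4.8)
The plan is to construct, for $n=(t-1)\lceil t^{\epsilon}\rceil$, a $3$-colouring of the hyperedges of $K_n^{(3)}$ with no monochromatic $BK_t$; since $n\ge (t-1)t^{\epsilon}$ this gives $R^3_3(BK_t,BK_t,BK_t)\ge n+1>(t-1)t^{\epsilon}$. The driving notion is that of a \emph{deficient pair}: call a pair $\{u,v\}$ \emph{$c$-deficient} if no hyperedge $\{u,v,w\}$ receives colour $c$. If a $t$-set $S$ contains a $c$-deficient pair $\{u,v\}$, then no injection from $\binom{S}{2}$ to colour-$c$ hyperedges can respect containment, because the pair $\{u,v\}$ has no available hyperedge; hence $S$ is not the core of any colour-$c$ Berge-$K_t$. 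So it suffices to arrange that for each colour $c$ the graph $D_c$ of $c$-deficient pairs satisfies $\alpha(D_c)\le t-1$: then every $t$-set meets $D_c$ for each $c$, and no monochromatic $BK_t$ exists.

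To create deficient pairs, I would first fix an auxiliary edge-colouring $\sigma$ of the ordinary complete graph $K_n$ and then colour each triple $T$ with any colour that is \emph{avoided} by the three $\sigma$-colours of the edges of $T$. Such a colour exists exactly when $T$ is not rainbow under $\sigma$, i.e.\ when $\sigma$ has no rainbow triangle (a Gallai colouring). With this triple-colouring, for every triple $T\supseteq\{u,v\}$ the chosen colour differs from $\sigma(\{u,v\})$, so each pair $p$ is $\sigma(p)$-deficient; thus $D_c\supseteq\sigma^{-1}(c)$ and $\alpha(D_c)\le\alpha(\sigma^{-1}(c))$. The problem therefore reduces to building a Gallai $3$-colouring $\sigma$ of $K_n$ in which every colour class has independence number at most $t-1$.

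For $\sigma$ I would partition the $n$ vertices into $q=t-1$ blocks of size $b=\lceil t^{\epsilon}\rceil$, colour every pair inside a block with colour $3$, and colour the pairs between blocks $B_i,B_j$ by a $2$-colouring $\psi$ (in colours $1,2$) of the complete graph $K_q$ on the blocks. Being a substitution of monochromatic cliques into a $2$-colouring, this is automatically Gallai. The colour-$3$ class is a disjoint union of cliques, so $\alpha=q=t-1$; the colour-$1$ and colour-$2$ classes are blow-ups by a factor $b$ of the two colour classes of $\psi$, so each has independence number equal to $b$ times the size of the largest monochromatic clique of $\psi$. Choosing $\psi$ so that all its monochromatic cliques have size at most $\lfloor (t-1)/b\rfloor$ forces these to be at most $t-1$ as well. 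Such a $\psi$ on $t-1$ vertices exists as soon as $t-1<R_2(s+1,s+1)$ with $s=\lfloor (t-1)/b\rfloor\approx t^{1-\epsilon}$, which holds for all sufficiently large $t$ because the two-colour Ramsey number grows super-polynomially (e.g.\ $R_2(s+1,s+1)\ge 2^{s/2}$).

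The main obstacle, and the reason the bound is only polynomial rather than exponential, is precisely the Gallai constraint. In order to be able to colour the triples at all, the auxiliary $\sigma$ must avoid rainbow triangles; and in any Gallai colouring the ``third'' colour is confined to the top-level blocks, which pins the independence number of that colour class at the number of blocks $q$. This caps $q\le t-1$ and limits $n$ to roughly $(t-1)$ times the block size, with the block size governed by a genuine two-colour Ramsey colouring $\psi$. I expect the verification of the blow-up independence-number identities and the existence of $\psi$ to be routine, so the conceptual heart of the argument is the deficient-pair mechanism together with the observation that realizing prescribed deficiencies at the triple level forces a Gallai structure on $\sigma$.
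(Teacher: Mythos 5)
Your proposal is correct and, despite its more abstract packaging (deficient pairs, Gallai colourings, blow-ups), it is essentially the paper's construction: partition the vertices into $t-1$ blocks of size roughly $t^{\epsilon}$, colour each triple according to its block pattern using an Erd\H{o}s-type $2$-colouring of the quotient $K_{t-1}$ having no monochromatic clique of size about $t^{1-\epsilon}$, and reserve the third colour for triples meeting three blocks. The paper's verification that the core of a monochromatic Berge clique meets fewer than $t^{1-\epsilon}$ blocks (hence has fewer than $t$ vertices) is exactly your deficient-pair observation in different words.
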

\begin{proof}
Let $\epsilon<1$. Take a vertex set consisting of $t-1$ disjoint sets of vertices $V_1, V_2, \ldots, V_{t-1}$, each of size $t^{\epsilon}$. If a hyperedge contains vertices from three different $V_i$, then color it green.  By the well-known lower bound on the diagonal Ramsey number $R(K_{t^{1-\epsilon}},K_{t^{1-\epsilon}}) =\Omega(2^{t^{1-\epsilon}/2})$, we can find a coloring of $K_{t-1}$ containing no clique of size $t^{1-\epsilon}$ when $t$ is sufficiently large.  Given such a red-blue coloring on the complete graph with vertex set $\{1,2,\dots,t-1\}$ we color the hyperedges consisting of two vertices from $V_i$ and one from $V_j$ by the color of $\{i,j\}$ in the graph.  We color every hyperedge completely contained in some $V_i$ red.  Observe that the core of any red or blue $BK_t$ may contain vertices in less than $t^{1-\epsilon}$ different classes and so has a total of less than $t$ vertices.
\end{proof}
\begin{remark}
This proof can give a slightly better bound on the order of $\frac{t^2}{\log(t)}$ but we write the bound in terms of $\epsilon$ for a simpler presentation.  
\end{remark}
%CT put the required floors and ceilings to make everything integers.

\begin{theorem}
\label{colors}
For any uniformity $r \geq 4$, and sufficiently large $c$ and $t$, we have 
\begin{displaymath}
R^{r}_c(BK_t,BK_t,\dots,BK_t) > t^{1+ \left(\frac{r-3}{r-2}\right)^{r-3}-\left(\frac{r-3}{r-2}\right)^{r-2}}.
\end{displaymath}
\end{theorem}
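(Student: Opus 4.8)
The plan is to generalize the construction from Claim~\ref{cl:sup-base} to higher uniformities by iterating the ``product with a Ramsey-free graph'' idea $r-2$ times. The core strategy for a lower bound of this type is always the same: exhibit a $c$-coloring of the complete $r$-uniform hypergraph on $N$ vertices so that every monochromatic Berge-$K_t$ is forced to have fewer than $t$ vertices in its core, which is impossible since a $BK_t$ needs $t$ core vertices. To make a monochromatic core small, I want to partition the vertex set into a hierarchy of blocks and arrange the coloring so that a monochromatic Berge clique can only ``spread'' across few blocks at each level of the hierarchy.

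\medskip
\noindent\textbf{Construction.} I would take a vertex set partitioned into $m$ blocks $V_1,\dots,V_m$, each of size $s$, so $N = ms$. I will use some colors to handle hyperedges that are ``spread out'' among many blocks, and recursively apply lower-bound colorings to hyperedges that are concentrated. Concretely, reserve one color (say the analogue of ``green'') for every hyperedge meeting at least three distinct blocks. The remaining hyperedges meet at most two blocks; these are exactly the configurations where a pair of blocks $\{i,j\}$ is involved, and I color them using a red-blue (or few-color) coloring of the \emph{block-index graph} $K_m$ that is free of cliques of size roughly $m^{1-\alpha}$, where $\alpha$ controls the exponent. The point of the $r\ge 4$ regime, versus the $r=3$ base case, is that a single hyperedge can now sit inside a single block using up to $r$ of its vertices, or split as $(r-1,1)$, $(r-2,2)$, etc. — so the recursion has more room. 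I would iterate: inside each block of size $s$, recolor using a $(r{-}1)$- or lower-uniform version of the same construction. The exponent $1+\left(\frac{r-3}{r-2}\right)^{r-3}-\left(\frac{r-3}{r-2}\right)^{r-2}$ is exactly the kind of quantity produced by optimizing block sizes across $r-3$ (or $r-2$) recursive levels, so I would set up a recurrence $N_r = m \cdot N_{r-1}$ with $m = s^{\beta}$ for an optimized $\beta$ and solve for the product of exponents, choosing $\alpha$ at each level to balance the cross-block and within-block contributions.

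\medskip
\noindent\textbf{Why the core stays small.} The verification step mirrors the base case. If a monochromatic $BK_t$ uses the ``green'' color, then by construction each of its hyperedges meets at most two blocks (green is excluded from a monochromatic non-green clique), so I can read off a clique in the auxiliary block-graph coloring; Ramsey-freeness of that coloring forces the core to occupy few blocks, hence few vertices once the within-block bound is folded in. If instead the clique is monochromatic in one of the recursively assigned colors, I peel off one level of the hierarchy and apply the inductive hypothesis to the concentrated hyperedges. In every case the number of blocks the core can touch, times the recursive bound on vertices per block, is strictly less than $t$. Choosing $c$ large enough to supply one fresh ``spread'' color per recursion level (the total number of colors used is bounded in terms of $r$ and the number of levels) completes the setup.

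\medskip
\noindent\textbf{Main obstacle.} The genuinely delicate part is the optimization that yields the stated exponent: I must choose the per-level block counts $m$ and block sizes $s$ so that the worst case over all ways a hyperedge can distribute among blocks (the $(r,0),(r-1,1),(r-2,2),\dots$ splits) still keeps every monochromatic core below $t$ vertices, and simultaneously maximize $N$. Balancing these competing constraints is what produces the exponent $\left(\frac{r-3}{r-2}\right)^{r-3}-\left(\frac{r-3}{r-2}\right)^{r-2}$, and getting the bookkeeping right across the recursion — ensuring that at each level the Ramsey lower bound $R(K_q,\dots,K_q)=\Omega(\text{exponential in }q)$ applied to the block-index graph leaves enough room — is where the real work lies. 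The counting that a monochromatic clique can only touch few blocks at each level is routine once the hierarchy is fixed, so I expect essentially all the difficulty to be concentrated in this calibration of parameters rather than in any structural argument.
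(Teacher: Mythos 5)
Your high-level plan (blocks, a Ramsey-free auxiliary coloring on block indices, a reserved color for spread-out hyperedges, and a recursion whose parameter optimization produces the exponent) is the same family of argument as the paper's, but your concrete construction breaks at exactly the point where $r\ge 4$ differs from $r=3$. The fatal step is reserving a \emph{single} ``green'' color for every hyperedge meeting at least three blocks. For $r=3$ this is safe because a hyperedge meeting three blocks has exactly one vertex in each, so no green hyperedge contains two vertices of the same block, and hence a green Berge clique has at most one core vertex per block (and Claim~\ref{cl:sup-base} uses only $t-1$ blocks, closing the argument). For $r\ge 4$ this fails: a hyperedge with two vertices in $V_i$, one in $V_j$ and one in $V_k$ meets three blocks and is green, so \emph{every} pair of vertices of the whole vertex set --- whether or not the two vertices share a block --- lies in a huge number of green hyperedges as soon as there are at least three blocks. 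A greedy (or Hall-type) selection of distinct green hyperedges, one per pair, then produces a green $BK_t$ on any $t$ vertices, so your coloring contains a monochromatic $BK_t$ and the lower bound collapses; your ``why the core stays small'' step simply does not address the green color class, and the $r=3$ justification does not carry over. The paper avoids precisely this: for each $s$ with $2\le s\le r-1$ it colors the hyperedges meeting \emph{exactly} $s$ blocks by a coloring of the $s$-uniform complete hypergraph $\h_s$ on the block indices that has no monochromatic Berge clique of size $t^{1-\epsilon}$ (these colorings exist by the induction on uniformity and use pairwise disjoint palettes, whence $c_r=\sum_{s=2}^{r-1}c_s+2$); only hyperedges whose $r$ vertices lie in $r$ distinct blocks get a single new color, since only for those is it true that no two core vertices of a clique in that color can share a block.

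Your recursion is also in the wrong place. You propose to recurse \emph{inside} each block with an $(r-1)$-uniform version of the construction, but hyperedges lying inside a block are still $r$-uniform, so this does not typecheck and no induction on uniformity can be run there. In Claim~\ref{rec} the recursion is horizontal: the auxiliary colorings $\h_s$, $2\le s\le r-1$, all live on the index set $\{1,\dots,M\}$, where $M$ is itself the inductive lower bound for $R^{r-1}_{c_{r-1}}(BK_{t^{1-\epsilon}},\dots,BK_{t^{1-\epsilon}})$; within-block hyperedges just reuse an already-used color and are harmless, because the final count is (number of blocks touched by the core) times (block size), i.e.\ less than $t^{1-\epsilon}\cdot t^{\epsilon}=t$. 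This gives the recursion $N=M\cdot t^{\epsilon}$, i.e.\ the exponent recursion $f_r(\epsilon)=(1-\epsilon)f_{r-1}(\epsilon)+\epsilon$ with $f_3(\epsilon)=1+\epsilon$, and the stated exponent comes from the single optimization $\epsilon=1/(r-2)$ --- not from calibrating a separate $\alpha$ at each of $r-2$ nested levels. Without the per-$s$ auxiliary colorings, no tuning of your parameters $m,s,\alpha,\beta$ repairs the green-color defect; the gap is structural, not a matter of bookkeeping.
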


Theorem \ref{colors} will follow from the following claim which we will prove by induction on $r$ by choosing the optimal $\epsilon$. 

\begin{claim}
\label{rec}
For any uniformity $r \geq 3$, and for any $\epsilon$ where $\epsilon<1$, for sufficiently large $c$ and $t$, we have
\begin{displaymath}
R^{r}_c(BK_t,BK_t,\dots,BK_t) >  t^{1+(1-\epsilon)^{r-3}-(1-\epsilon)^{r-2}}.
\end{displaymath}
\end{claim}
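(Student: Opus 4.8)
The plan is to prove Claim~\ref{rec} by induction on the uniformity $r$, using Claim~\ref{cl:sup-base} as the base case $r=3$ (where the exponent $1+(1-\epsilon)^{0}-(1-\epsilon)^{1} = 1+\epsilon$ matches the bound $(t-1)t^\epsilon$ already established). The inductive mechanism I would use is a standard "product" or "blow-up" construction that raises the uniformity by one: given a good coloring of an $r$-uniform complete hypergraph on $m$ vertices witnessing a lower bound for $R^r_c$, I would build an $(r+1)$-uniform coloring on a larger vertex set by replacing each vertex with a block and introducing one fresh color (call it green) to absorb all hyperedges that are "spread out" across too many blocks, exactly as green is used in Claim~\ref{cl:sup-base}.

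First I would set up the recursive construction precisely. Partition the vertex set into $m$ blocks $V_1,\dots,V_m$, each of size $t^\delta$ for a parameter $\delta$ to be optimized, where $m$ is itself the size guaranteed by the inductive hypothesis at uniformity $r$ for a smaller clique parameter (roughly $t^{1-\delta}$, so that the total number of core vertices any monochromatic Berge clique can use stays below $t$). An $(r+1)$-hyperedge is colored green if its $r+1$ vertices meet at least some threshold number of distinct blocks; otherwise it lies in few blocks and I color it according to the color its "block-pattern" receives in the inductive $r$-uniform coloring on the index set $[m]$. The key design constraint is that a monochromatic (non-green) Berge-$K_t$ must have its core concentrated in blocks forming a monochromatic structure in the smaller instance, so by the inductive hypothesis it can touch fewer than $t^{1-\delta}$ blocks, hence fewer than $t^{1-\delta}\cdot t^\delta = t$ vertices total — too few to be a $BK_t$. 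Green hyperedges, by the spread-out condition, likewise cannot form the core of a Berge-$K_t$ because a Berge clique's core vertex pairs must each lie inside a common hyperedge, which constrains how many blocks the core can span.

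Second, I would carry out the exponent bookkeeping. If the inductive bound at uniformity $r$ gives $m > t^{1+(1-\epsilon)^{r-3}-(1-\epsilon)^{r-2}}$ (applied with the clique parameter scaled to $t^{1-\delta}$), then the new vertex count is $m\cdot t^\delta$, and I would choose $\delta$ to balance the two competing effects so that the resulting exponent telescopes into $1+(1-\epsilon)^{r-2}-(1-\epsilon)^{r-1}$, matching the claim at uniformity $r+1$. The optimization of $\delta$ against $\epsilon$ is the routine calculus step; setting $\delta = \epsilon$ (so each level scales the clique parameter by a factor $t^{-\epsilon}$) should produce the clean geometric-series form in the exponent, and Theorem~\ref{colors} then follows by substituting the $\epsilon$ that maximizes $(1-\epsilon)^{r-3}-(1-\epsilon)^{r-2}$, namely $\epsilon = 1/(r-2)$.

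The hard part will be verifying rigorously that no monochromatic Berge-$K_t$ survives, in two respects. The delicate direction is bounding the core of a green Berge clique: I must argue that the spread-out threshold defining green can be chosen large enough that green hyperedges cannot support the dense web of vertex pairs a $BK_t$ core demands, yet small enough that the non-green hyperedges faithfully inherit the coloring of the smaller instance. Making these two thresholds compatible — and ensuring the "few blocks" case genuinely reduces to the inductive $r$-uniform coloring rather than to some degenerate pattern — is where the real care is needed; the counting that guarantees enough surviving structure is straightforward once the construction is pinned down, but the combinatorial argument that a Berge clique cannot sneak its core through the green edges across many blocks is the crux.
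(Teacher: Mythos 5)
Your proposal follows the paper's general strategy (blow-up of an index coloring into blocks of size $t^{\epsilon}$, a green color for spread-out hyperedges, projection of a monochromatic core onto blocks, and the same telescoping exponent arithmetic with $\delta=\epsilon$), but it has a genuine gap in the inductive step: your scheme only assigns colors to $(r+1)$-uniform hyperedges that meet \emph{exactly} $r$ blocks (via the inductive $r$-uniform coloring of the index set) or that are spread out (green). A hyperedge of the complete $(r+1)$-uniform hypergraph can meet any number $k$ of blocks with $1\le k\le r+1$, and those with $2\le k\le r-1$ (for instance $r$ vertices in one block and one vertex in a second block) have block patterns of size $k<r$, which an $r$-uniform coloring of $[m]$ simply does not color. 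These hyperedges cannot be absorbed by any threshold choice: if you give all of them a single color (green, or any one reused color), then every pair of vertices --- whether inside one block or across two blocks --- is contained in many hyperedges of that color (complete the pair to an $(r+1)$-set inside the union of the two relevant blocks), and a Hall's theorem/greedy argument then produces a monochromatic Berge clique whose core is essentially the whole vertex set, which has far more than $t$ vertices. So the ``one previous level plus one fresh color'' architecture, with colors $c_{r+1}=c_r+1$, cannot work no matter how the two thresholds are tuned; this is not a technicality to be checked but the place where the construction breaks.

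The paper resolves exactly this by a \emph{strong} induction over uniformity: on the index set $\{1,\dots,M\}$ it fixes, for every $s$ with $2\le s\le r-1$, a $c_s$-coloring $\mathcal{H}_s$ with no monochromatic $BK_{t^{1-\epsilon}}$, with pairwise disjoint palettes, and colors a hyperedge meeting exactly $s$ classes by the color of its $s$-element class pattern in $\mathcal{H}_s$; green is reserved only for hyperedges meeting $r$ distinct classes, and hyperedges inside a single class reuse an old color. This is why the paper's color count is $c_r=\sum_{s=2}^{r-1}c_s+2$, growing with $r$, rather than one new color per level. A secondary point: the green class is harmless only because a green hyperedge meets each block in at most one vertex, so the core of a green Berge clique has at most one vertex per block, and the number of blocks is \emph{less} than $t$ (indeed $M=t^{1-\epsilon+\epsilon(1-\epsilon)^{r-3}}<t$); your phrase that spread-out edges ``constrain how many blocks the core can span'' is not the right reason, and the inequality $M<t$, while it does follow from your parameter bookkeeping, needs to be stated and used. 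Your exponent telescoping and the choice $\epsilon=1/(r-2)$ for Theorem~\ref{colors} match the paper and are fine.
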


\begin{proof}
The base case follows from Claim~\ref{cl:sup-base}. Now assume that $r\geq 4$. Let $\epsilon< 1$. Let $c_s$ be the number of colors required for Claim~\ref{rec} to hold for an $s$-uniform hypergraph for $2 \le s \le r-1$. Let $M$ be the lower bound we obtain by induction for the function $R^{r-1}_{c_{r-1}}(BK_{t^{1-\epsilon}},BK_{t^{1-\epsilon}},\dots,BK_{t^{1-\epsilon}}).$  We will show
\begin{displaymath}
R^{r}_{c_{r}}(BK_t,BK_t,\dots,BK_t) > M \cdot t^\epsilon.
\end{displaymath}
for some constant $c_r$ depending on $r$.

Take the complete $r$-uniform hypergraph $\h$ on $N = M \cdot t^\epsilon$ vertices.  Partition the vertex set into sets $V_1,V_2,\dots,V_M$ each consisting of $t^\epsilon$ vertices. We consider $s$-uniform complete hypergraphs $\h_s$ defined on the vertex set $\{1,2,\dots,M\}$ for $2 \le s \le r-1$. Since the lower bounds in Claim \ref{rec} are decreasing (in $r$), we have for $c_s$ colors a coloring of $\h_s$ with no Berge clique of size $t^{1-\epsilon}$ provided $t$ is sufficiently large.  Assume, indeed, that $t$ is at least the maximum required for any $s$. 

Now, given the colorings of $\h_i$ with $c_i$ colors, we define a coloring on $\h$ with $c_r = \sum_{s=2}^{r-1} c_s+2$ colors and no monochromatic $BK_t$.  For $2 \le s \le r-1$ we color all hyperedges containing elements of the vertex sets $V_{i_1},V_{i_2},\dots,V_{i_s}$ with the same color as $\{i_1,i_2,\dots,i_s\}$ in the coloring of $\h_s$.  Observe that the core of a monochromatic $BK_t$ in $\h$ can contain vertices from fewer than $t^{1-\epsilon}$ classes. Since $\h_s$ has no monochromatic $BK_{t^{1-\epsilon}}$, and each class has $t^\epsilon$ vertices, it follows that $\h$ has no monochromatic $BK_t$ using hyperedges containing vertices from between 2 and $r-1$ classes. Finally, we may color the hyperedges contained in each $V_i$ with any color used so far and the hyperedges containing vertices from $r$ classes with a new color.  

It remains to verify that $M \cdot t^\epsilon$ yields the required bound.  Indeed,
\begin{displaymath}
M\cdot t^\epsilon =  t^{(1-\epsilon)\left(1+(1-\epsilon)^{r-4}-(1-\epsilon)^{r-3}\right)}\cdot t^\epsilon = t^{1+(1-\epsilon)^{r-3}-(1-\epsilon)^{r-2}}. \qedhere
\end{displaymath}

%Since 
%For every pair $\{i,j\}, 1 \le i,j \le M$ 
\end{proof}

We now discuss briefly the case of forbidding Berge-cliques of higher uniformity. First we collect some basic lemmas about the Ramsey number for Berge cliques in different uniformities.  

\begin{lemma}
\label{differentorders}
For any $r,c,a,b$, where $a<b$ and for $t$ sufficiently large, we have
\begin{displaymath}
R_c^r(BK_t^{(b)},BK_t^{(b)},\dots,BK_t^{(b)}) \ge R_c^r(BK_t^{(a)},BK_t^{(a)},\dots,BK_t^{(a)}). 
\end{displaymath}
\end{lemma}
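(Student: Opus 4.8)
I would prove this monotonicity statement by establishing a structural containment and then reading off the inequality from the extremal colorings. Concretely, the plan is to show that in any $c$-coloring of the complete $r$-uniform hypergraph, every monochromatic $BK_t^{(b)}$ already contains a monochromatic $BK_t^{(a)}$ of the same color, provided $t$ is large. Granting this, the lemma is immediate: writing $N_a = R_c^r(BK_t^{(a)}, \dots, BK_t^{(a)})$, there is by definition a $c$-coloring of the complete $r$-uniform hypergraph on $N_a - 1$ vertices with no monochromatic $BK_t^{(a)}$; by the contrapositive of the containment claim this same coloring also has no monochromatic $BK_t^{(b)}$, whence $R_c^r(BK_t^{(b)}, \dots, BK_t^{(b)}) \ge N_a$.

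\textbf{Building the smaller Berge clique.} For the containment claim, suppose some color (say red) supports a $BK_t^{(b)}$ with core $U$, a $t$-set, and defining bijection $f \colon \binom{U}{b} \to \F$ onto its red hyperedges, so that $T \subseteq f(T)$ for each $b$-subset $T$. I want a red $BK_t^{(a)}$ on the same core, i.e. an injection $g$ from $\binom{U}{a}$ into the red hyperedges with $S \subseteq g(S)$ for every $a$-subset $S$. The idea is to route each $a$-subset through a distinct $b$-superset and recycle the hyperedges of $f$: if $\mu \colon \binom{U}{a} \to \binom{U}{b}$ is an injection with $S \subseteq \mu(S)$, then $g = f \circ \mu$ does the job. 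Injectivity of $f$ and $\mu$ makes $g$ injective, so the $\binom{t}{a}$ chosen hyperedges are distinct; each $g(S) = f(\mu(S)) \supseteq \mu(S) \supseteq S$ contains $S$; and each is red. Thus $(U, g)$ realizes the desired monochromatic $BK_t^{(a)}$.

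\textbf{The one real step.} Everything hinges on producing the containment-respecting injection $\mu$ from the $a$-subsets into the $b$-subsets of $U$. I would obtain it from Hall's theorem applied to the bipartite containment graph between $\binom{U}{a}$ and $\binom{U}{b}$, which is biregular: each $a$-subset lies in $\binom{t-a}{b-a}$ of the $b$-subsets and each $b$-subset contains $\binom{b}{a}$ of the $a$-subsets. Double counting the edges leaving any family $\mathcal{S} \subseteq \binom{U}{a}$ gives $|N(\mathcal{S})| \ge |\mathcal{S}| \binom{t-a}{b-a}/\binom{b}{a}$, so Hall's condition $|N(\mathcal{S})| \ge |\mathcal{S}|$ holds as soon as $\binom{t-a}{b-a} \ge \binom{b}{a}$. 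Since $a$ and $b$ are fixed while $\binom{t-a}{b-a} \to \infty$, this inequality holds for all sufficiently large $t$ (in particular once $b \le t/2$), which is exactly where the hypothesis is used. This inequality is the only obstacle, and a mild one; the remainder of the argument is just bookkeeping about the embedding.
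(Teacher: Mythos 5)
Your proof is correct and takes essentially the same approach as the paper: the paper also reduces the lemma to finding an injection from $\binom{[t]}{a}$ to $\binom{[t]}{b}$ mapping each set to one of its supersets, obtained from Hall's theorem via exactly your double-counting bound $\abs{\mathcal{S}}\binom{t-a}{b-a} \le \abs{N(\mathcal{S})}\binom{b}{a}$. The only difference is that you spell out the bookkeeping (composing the injection with the Berge bijection, and the contrapositive applied to an extremal coloring) that the paper treats as immediate.
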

\begin{proof}
It is sufficient to prove that for sufficiently large $t$, there is an injection from $\binom{[t]}{a}$ to $\binom{[t]}{b}$ mapping sets to one of their supersets.  Let $S \subset \binom{[t]}{a}$ and $\phi(S)$ be the elements of $\binom{[t]}{b}$ which contain some element from $S$.  We have $\abs{S}\binom{t-a}{b-a} \le \abs{\phi(S)} \binom{b}{a}$ by double-counting the relations between the two levels. Then $\abs{\phi(S)} \ge \abs{S}$ is obvious for sufficiently large $t$, and we have the desired injection by Hall's theorem.
\end{proof}

% \begin{lemma}\label{differentuniformities}
% For any $a,r,r', t$ with $r' \ge r$, if $N=R_c^r(BK_t^{(a)},BK_t^{(a)},\dots,BK_t^{(a)})$ is sufficiently large, then $R_c^{r'}(BK_t^{(a)},BK_t^{(a)},\dots,BK_t^{(a)}) \le N$.
% \end{lemma}
% \begin{proof}
% If $N$ is sufficiently large, then the same calculation as in  Lemma \ref{differentorders} yields an injection $\phi:E(K_N^{(r)}) \to E(K_N^{(r')})$ satisfying $e \subset \phi(e)$ for all $e \in E(K_N^{(r)})$. Then given any $c$-coloring of $K_N^{(r')}$, for every edge $e \in E(K_N^{(r)})$, color $e$ with the color of $\phi(e)$. This $c$-coloring on $K_N^{(r)}$ yields a monochromatic $BK_t^{(a)}$ in the $r$-uniform graph.  The images under $\phi$ of the edges involved in this $BK_t^{(a)}$ will yield a $BK_t^{(a)}$ with respect to the uniformity $r'$.
% \end{proof}

\begin{corollary}
\label{higherorder}
For any uniformity $r$, $a<r$, and sufficiently large $c$ and $t$, we have 
\begin{displaymath}
R^{r}_c(BK_t^{(a)},BK_t^{(a)},\dots,BK_t^{(a)}) \ge t^{1 + \left(\frac{r-3}{r-2}\right)^{r-3}-\left(\frac{r-3}{r-2}\right)^{r-2}}.
\end{displaymath}
\end{corollary}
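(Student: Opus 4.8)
The plan is to obtain this corollary with essentially no new combinatorial work, by combining the two immediately preceding results: the multicolor lower bound of Theorem~\ref{colors} for ordinary (uniformity-two) Berge cliques, and the monotonicity of the Ramsey number in the uniformity of the forbidden clique recorded in Lemma~\ref{differentorders}. The key observation is that the exponent on the right-hand side is exactly the one produced by Theorem~\ref{colors}; since that theorem is stated for $r \ge 4$, this is the regime in which we work, and the only task is to transfer the bound from $BK_t = BK_t^{(2)}$ up to $BK_t^{(a)}$ for each $2 \le a < r$.

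I would first dispose of the base case $a = 2$ trivially: there $BK_t^{(a)} = BK_t^{(2)} = BK_t$, so the assertion is literally the conclusion of Theorem~\ref{colors}, using only that the strict inequality ``$>$'' implies the claimed ``$\ge$''. For the remaining range $3 \le a < r$, I would apply Lemma~\ref{differentorders} with its two uniformity parameters set to $2$ and $a$ (so that $2 < a$), obtaining, for $t$ sufficiently large,
\[
R^r_c(BK_t^{(a)},\dots,BK_t^{(a)}) \;\ge\; R^r_c(BK_t^{(2)},\dots,BK_t^{(2)}).
\]
Chaining this with Theorem~\ref{colors} applied to the right-hand side gives
\[
R^r_c(BK_t^{(a)},\dots,BK_t^{(a)}) \;\ge\; R^r_c(BK_t^{(2)},\dots,BK_t^{(2)}) \;>\; t^{1 + \left(\frac{r-3}{r-2}\right)^{r-3} - \left(\frac{r-3}{r-2}\right)^{r-2}},
\]
which is exactly the claimed bound. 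One simply takes $c$ and $t$ large enough to meet the hypotheses of both Theorem~\ref{colors} and Lemma~\ref{differentorders} at once; as each asks only for ``$c$ and $t$ sufficiently large'', the larger of the two thresholds suffices.

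There is no genuine obstacle at the level of this corollary, since all the combinatorial content lives in the two results being combined. The only point deserving care is the \emph{direction} of the monotonicity in Lemma~\ref{differentorders}: it says that forbidding a higher-uniformity clique can only increase the Ramsey number, which is exactly what lets us push the uniformity-two bound \emph{up} to an arbitrary $a$ with $a < r$, rather than down. Conceptually, this is because a Berge-$K_t^{(2)}$ embeds inside any Berge-$K_t^{(a)}$ via the superset injection $\binom{[t]}{2} \to \binom{[t]}{a}$ used to prove that lemma; hence any coloring avoiding a monochromatic $BK_t^{(2)}$ a fortiori avoids a monochromatic $BK_t^{(a)}$, and the extremal coloring furnished by Theorem~\ref{colors} already witnesses the desired lower bound for $BK_t^{(a)}$.
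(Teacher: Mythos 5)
Your proposal is correct and is exactly the paper's own argument: the paper proves this corollary in one line, citing precisely the combination of Lemma~\ref{differentorders} (monotonicity of the Ramsey number in the uniformity of the forbidden Berge clique) with the lower bound of Theorem~\ref{colors} for $BK_t^{(2)}$. Your additional care about the base case $a=2$ and the direction of the monotonicity only makes explicit what the paper leaves implicit.
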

\begin{proof}
The result is immediate from Lemma \ref{differentorders} and Theorem \ref{colors}.
\end{proof}

\section{Ramsey numbers of 2-shadow graphs and proof of Theorem \ref{thm:2-shadow}}
\label{2shadow}
In this short section, we discuss some results on the Ramsey number of $R^r(\partial K_t, \partial K_s)$. On the one hand, we have $R^r(\partial K_t, \partial K_s) \le R^r(BK_t, BK_s)$. Most of the constructions from Section \ref{sc:Berge} are also constructions for $R^r(\partial K_t, \partial K_s)$; however, there are some exceptions.

\begin{proposition}
For $s,t \ge 3$, we have  $R^3(\partial K_t, \partial K_s) = t+s-3$.  For $s \ge 3$, $R^3(\partial K_2, \partial K_s) = s$ and  $R^3(\partial K_2, \partial K_2) =3$.  
\end{proposition}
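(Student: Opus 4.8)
The plan is to prove the proposition in three parts, leveraging the close relationship $\partial K_n \supseteq BK_n$ together with the fact that most constructions carry over. The key structural observation is the remark from the excerpt: for any graph $G$, we have $BG \subset \partial G$. This means every $BK_t$ is automatically a $\partial K_t$, so any monochromatic $BK_t$ immediately gives a monochromatic $\partial K_t$. Consequently, the \emph{upper bounds} transfer for free: since Theorem~\ref{berge-Ramsey} gives $R^3(BK_t,BK_s)=t+s-3$ for $s,t\geq 4$ with $\max\{s,t\}\geq 5$, and the relevant induction lemmas apply, we obtain $R^3(\partial K_t,\partial K_s)\leq R^3(BK_t,BK_s)$ whenever the latter equals $t+s-3$. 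The subtlety is that for small cases (e.g.\ $s=t=3$ or $s=t=4$), the Berge Ramsey number exceeds $t+s-3$, so the upper bound must be argued directly for $\partial$-cliques rather than inherited.

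First I would establish the lower bounds. For $s,t\geq 3$, I expect the construction from Proposition~\ref{berge:lower} to work verbatim: partition $V(\cH)=A\sqcup B$ with $\abs{A}=t-2$, $\abs{B}=s-2$, color triples meeting $A$ in two points blue, triples meeting $B$ in two points red, all-$A$ triples blue and all-$B$ triples red. One must recheck that a blue $\partial K_t$ cannot arise, which requires verifying that the $2$-shadow restricted to blue edges cannot contain $K_t$; since any blue hyperedge contributes at most one pair touching $B$, the blue shadow confined to a $t$-set would force essentially all $t$ vertices into $A$, contradicting $\abs{A}=t-2$. This gives $R^3(\partial K_t,\partial K_s)\geq t+s-3$. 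For the $\partial K_2$ cases, a single edge is already a $\partial K_2$, so the lower bounds $R^3(\partial K_2,\partial K_s)\geq s$ and $R^3(\partial K_2,\partial K_2)\geq 3$ follow from small explicit colorings analogous to Proposition~\ref{prop:small-case}.

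For the upper bounds, I would prove an analogue of the induction Lemma~\ref{berge:induction}, namely $R^3(\partial K_t,\partial K_s)\leq \max\{R^3(\partial K_{t-1},\partial K_s),R^3(\partial K_t,\partial K_{s-1})\}+1$, following the same Hall's-theorem matching argument: fix a vertex $v$, find a monochromatic $\partial K_{t-1}$ or $\partial K_s$ in $\cH-v$, and extend using the blue (or red) trace of $v$. The crucial simplification is that $\partial K_t$ only requires the $2$-shadow to contain $K_t$, which is a \emph{weaker} and more flexible condition than the Berge bijection requirement, so the extension step should be at least as easy as in the Berge case. I would verify the base cases $R^3(\partial K_3,\partial K_3)$ and $R^3(\partial K_4,\partial K_4)$ directly (these reduce to showing $t+s-3$ suffices, which for $\partial$-cliques should hold without the computer-verified exceptions that plagued the Berge case).

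The main obstacle I anticipate is precisely the small-case discrepancy: the $\partial$-clique upper bound $t+s-3$ is \emph{smaller} than the Berge upper bound in the regimes $s=t=3,4$, so one cannot simply cite Theorem~\ref{berge-Ramsey}. The hard part will be showing that in a $2$-colored $K^{(3)}_n$ with $n=t+s-3$ one always finds a monochromatic $K_t$ or $K_s$ \emph{in the shadow}, where the relaxation from Berge's bijection to mere shadow-containment must be exploited to beat the Berge bound. I expect the resolution is that finding a monochromatic shadow-clique is genuinely easier: one can afford to reuse hyperedges across different pairs of the clique, removing the matching bottleneck entirely, and a direct degree-counting or greedy shadow argument on $n=t+s-3$ vertices should close the gap.
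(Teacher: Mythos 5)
Your overall route is essentially the paper's route. The lower-bound construction you describe is exactly the paper's (one slip in your verification: a blue hyperedge $\{a,a',b\}$ contributes \emph{two} shadow pairs meeting $B$; the fact you actually need is that no blue hyperedge contains two vertices of $B$, so the blue shadow has no edge inside $B$, hence any blue core has at most one vertex outside $A$, i.e.\ at most $t-1$ vertices). The paper's upper bound is likewise an induction on $s+t$ powered by the observation you close with: a shadow clique only needs each pair of its core covered by \emph{some} monochromatic hyperedge, with no injectivity constraint, so hyperedges can be reused and all Hall-type machinery disappears. The paper's induction step is leaner than your two-invocation recursion: on $s+t-3$ vertices, either every pair lies in some blue hyperedge --- in which case the entire vertex set is the core of a blue shadow clique on $s+t-3\ge t$ vertices --- or some pair $\{x,y\}$ has $\{x,y,z\}$ red for every $z$, and then a single application of the induction hypothesis to $V\setminus\{x\}$ gives a red $\partial K_{s-1}$ which those red triples extend to a red $\partial K_s$. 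Your recursion lemma is also valid (the extension steps go through exactly as you predict, and in fact no Hall's theorem is needed anywhere), so this difference is organizational.

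The one genuine flaw is your base-case bookkeeping. Your recursion cannot be used when $\min\{s,t\}=3$: its right-hand side is at least $R^3(\partial K_t,\partial K_2)+1=t+1$, which overshoots the target $t+s-3=t$. Consequently \emph{every} case with $\min\{s,t\}=3$ --- not just $(3,3)$ and $(4,4)$ --- must be handled directly, and since these cases are themselves part of the proposition, your plan as written leaves infinitely many of them unproven; the Berge transfer does not rescue them either, because $R^3(BK_3,BK_t)=t+1$ also exceeds $t$. Fortunately, the ``hard part'' you anticipate is trivial, by exactly the reuse phenomenon you identify: on $t+s-3=\max\{s,t\}$ vertices, either some hyperedge is blue, and its three vertices already form a blue shadow triangle (a blue $\partial K_3$), or every hyperedge is red and the red shadow is complete, giving a red $\partial K_{\max\{s,t\}}$. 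With these base cases your induction closes --- $(4,4)$ follows from $(3,4)$ and $(4,3)$, and all larger cases follow from your recursion (or from Theorem~\ref{berge-Ramsey}) --- and no degree counting or greedy argument is needed.
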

\begin{proof}
It is easy to see that $R^3(\partial K_2, \partial K_2) =3$ and $R^3(\partial K_2, \partial K_s) = s$ for $s\geq 3$. We will now show $R^3(\partial K_t, \partial K_s) \leq t+s-3$ for $s,t\ge 3$ by inducting on $s+t$.
The cases when $s$ or $t$ is 3 are trivial.  Assume the theorem holds for smaller $s+t$ and take a $2$-edge-colored complete $3$-uniform hypergraph $\cH$ on the vertex set $V$ of size $s+t-3$ where $s,t\geq 4$. If for all $x,y \in V$ we have that there exists $z$ such that $\{x,y,z\}$ is blue, then we have complete blue clique in the 2-shadow.  Otherwise suppose there is a pair of vertices $x,y$ such that for all $z \in V\setminus \{x,y\}$ we have $\{x,y,z\}$ is red, then consider the subhypergraph of $\cH$ induced by $V \setminus \{x\}$.  By induction, there exists either a blue $\partial K_t$, in which case we are done, or a red $\partial K_{s-1}$ with $Y$ as its core. Then we can extend it to a red $\partial K_s$ with $Y\cup \{x\}$ as its core by adding the red hyperedges $\{x,y,z\}$ where $z \in Y$.

The lower bound construction is to take a set of $t-2$ vertices $A$ and a set of $s-2$ vertices $B$ and color a hyperedge red if and only if it intersects $A$ in at most 1 vertex.
\end{proof}

\begin{proposition}
For $r \ge 4$ and $s,t \ge 2$, we have $R^r(\partial K_t, \partial K_s) = \max\{s,t,r\}$.  
\end{proposition}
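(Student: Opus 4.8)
The plan is to prove the two inequalities $R^r(\partial K_t, \partial K_s) \ge \max\{s,t,r\}$ and $R^r(\partial K_t, \partial K_s) \le \max\{s,t,r\}$ separately, assuming throughout (without loss of generality) that $t \ge s$, so that $\max\{s,t,r\} = \max\{t,r\} =: n$. The central observation I will exploit is a reformulation in terms of \emph{availability}: call a pair $\{a,b\}$ \emph{blue-available} (resp.\ \emph{red-available}) if some blue (resp.\ red) hyperedge contains it. Since membership in $\partial K_m$ only asks that the $2$-shadow contain $K_m$, a monochromatic blue $\partial K_t$ exists exactly when the blue-availability graph contains a $K_t$, and similarly for red; this turns the whole problem into one about finding cliques in two graphs on the vertex set.

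For the upper bound I would take any $2$-coloring of the complete $r$-uniform hypergraph on $n \ge r \ge 4$ vertices and split into two cases. If every pair is blue-available, then the blue-availability graph is complete, giving a blue $\partial K_n$ and hence (as $n \ge t$) a blue $\partial K_t$ on any $t$ of the vertices. Otherwise there is an \emph{all-red} pair $\{x,y\}$, meaning every hyperedge through $\{x,y\}$ is red. The key step is then to show that a single all-red pair forces \emph{every} pair to be red-available: given any pair $\{a,b\}$, since $|\{x,y,a,b\}| \le 4 \le r \le n$ I can choose an $r$-set $H \supseteq \{x,y\} \cup \{a,b\}$; because $\{x,y\} \subseteq H$ the hyperedge $H$ is red, and since $\{a,b\} \subseteq H$ it witnesses red-availability of $\{a,b\}$. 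Hence the red-availability graph is complete, yielding a red $\partial K_n \supseteq \partial K_s$. This argument is uniform across the subcases $n = r$ (where there is a single hyperedge) and $n = t > r$.

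For the lower bound I would exhibit colorings on $n-1$ vertices with no blue $\partial K_t$ and no red $\partial K_s$, in two regimes. If $t \le r$ then $n = r$ and the complete $r$-uniform hypergraph on $r-1 < r$ vertices has no hyperedges at all, so neither color contains even a $\partial K_2$. If $t > r$ then $n = t$ and I would color every hyperedge of the (nonempty, since $t-1 \ge r$) hypergraph on $t-1$ vertices blue: there are no red hyperedges, so no red $\partial K_s$, while a blue $\partial K_t$ is impossible because its core needs $t > t-1$ vertices. Together these give $R^r(\partial K_t, \partial K_s) \ge \max\{t,r\} = \max\{s,t,r\}$.

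The hard part is really just the one key observation in the upper bound, and the main thing to watch is that it uses $r \ge 4$ in an essential way: the routing of an arbitrary pair $\{a,b\}$ through the all-red pair $\{x,y\}$ requires a hyperedge large enough to hold all four vertices, which is exactly why the answer collapses to $\max\{s,t,r\}$ for $r \ge 4$ whereas the three-uniform case treated above gives the larger value $t+s-3$. The remaining points to verify are only the boundary behaviors---that $\partial K_m$ genuinely requires $m$ distinct core vertices (so $t-1$ vertices cannot host a blue $\partial K_t$), and that $\partial K_s$ with $s \ge 2$ requires at least one monochromatic pair (so an all-blue coloring kills red copies)---both of which are immediate from the definitions.
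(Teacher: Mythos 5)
Your proposal is correct and takes essentially the same approach as the paper: the upper bound splits on whether every pair lies in some blue hyperedge, and otherwise routes an arbitrary pair $\{a,b\}$ through the all-red pair $\{x,y\}$ inside a single $r$-set (this is exactly where $r\ge 4$ enters, and is the step the paper dismisses as ``easy to see''), while the lower bound is the paper's monochromatic clique on $\max\{s,t,r\}-1$ vertices, which you merely split into the two regimes $\max\{s,t\}\le r$ and $\max\{s,t\}>r$. No substantive differences beyond spelling out these details.
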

\begin{proof}
Consider a $2$-edge-colored complete $r$-uniform hypergraph on $N= \max\{s,t,r\}$ vertices. 
Suppose first, that for all $x,y \in V$ there exists $z_1,z_2,\dots,z_{r-2}$ such that $\{x,y,z_1,z_2,\dots,z_{r-2}\}$ is blue, then there is a blue $K_N$ in the shadow.  
On the other hand, if there are $x,y \in V$, such that for all $z_1,z_2,\dots,z_{r-2}$, $\{x,y,z_1,z_2,\dots,z_{r-2}\}$ is red, then it is easy to see that there is a red $K_N$ in the 2-shadow. 
Thus, $R^r(\partial K_t, \partial K_s) \le \max\{s,t,r\}$. 
On the other hand taking a clique of the appropriate color on $\max\{s,t,r\}-1$ vertices yields a construction for the lower bound.
\end{proof}

\begin{remark}
The superlinear lower bounds constructed in Subsection \ref{superlinear} are in fact constructions for hypergraphs without monochromatic cliques in the 2-shadow.  Thus, the same lower bounds hold.
\end{remark}

\section{Ramsey numbers of trace-cliques}\label{sc:trace}

Throughout this section, we assume that $a,b$ are positive integers.

\begin{lemma}
\label{weak}
$R^{a+b+1}(T K^{(a+1)}_{t},T K^{(b+1)}_{s}) \leq R^{a+b+1}(T K^{(a+1)}_{t-1},T K^{(b+1)}_{s})+s-b$, for $t\geq a+1, s \geq b+1$. 

\end{lemma}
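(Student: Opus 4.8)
The plan is to prove this by a deletion-and-extension argument, reserving a set of $s-b$ vertices that will later serve as most of the core of a red $TK^{(b+1)}_{s}$. Write $r = a+b+1$ and $N' = R^{r}(TK^{(a+1)}_{t-1},TK^{(b+1)}_{s})$, and consider a $2$-edge-coloring (blue/red) of the complete $r$-uniform hypergraph on a vertex set $V$ with $\abs{V} = N' + (s-b)$. Fix an arbitrary $D \subseteq V$ with $\abs{D} = s-b$ and apply the definition of $N'$ to the induced coloring on $V \setminus D$, which has exactly $N'$ vertices. This yields either a red $TK^{(b+1)}_{s}$, in which case we are done, or a blue $TK^{(a+1)}_{t-1}$ whose core I call $V_0 \subseteq V\setminus D$, with $\abs{V_0} = t-1$. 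For each $S \in \binom{V_0}{a+1}$ I fix a blue hyperedge $h_S$ witnessing the trace, so that $h_S \cap V_0 = S$ and $h_S \subseteq V \setminus D$.

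First I would try to grow the blue clique by adjoining a vertex $w \in D$ to the core, forming $C = V_0 \cup \{w\}$. The crucial bookkeeping point is that each $h_S$ lies in $V\setminus D$ and hence avoids $w$, so $h_S \cap C = S$ and all $(a+1)$-subsets of $C$ not containing $w$ are automatically realized by blue hyperedges. Thus the only possible obstruction to a blue $TK^{(a+1)}_{t}$ with core $C$ is an $(a+1)$-subset of $C$ through $w$, i.e.\ a set $\{w\}\cup S'$ with $S' \in \binom{V_0}{a}$. Consequently, if adjoining some $w$ succeeds, the resulting hypergraph is a blue $TK^{(a+1)}_{t}$ and we are done; otherwise, for every $w \in D$ there is a set $S'_w \in \binom{V_0}{a}$ such that, writing $P_w = \{w\}\cup S'_w$, every hyperedge $P_w \cup T$ with $T$ a $b$-subset of $V\setminus(V_0\cup\{w\})$ is red. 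Note that the hypothesis $t \ge a+1$ is exactly what guarantees $\binom{V_0}{a}\neq\emptyset$.

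The key idea is then to assemble these red ``books'' into a red $TK^{(b+1)}_{s}$ with core $C_R := D \cup W$, where $W$ is any $b$-element set disjoint from $V_0 \cup D$; such a $W$ exists because a blue $TK^{(a+1)}_{t-1}$ already needs $t-1+b$ vertices, forcing $N' \ge t-1+b$ and so leaving at least $b$ vertices outside $V_0 \cup D$. Since $\abs{C_R} = (s-b)+b = s$ while $\abs{C_R \setminus D} = b$, every $(b+1)$-subset $Q$ of $C_R$ must contain some $w \in D$. For such a $Q$, set $T = Q \setminus \{w\}$, a $b$-subset of $C_R \setminus \{w\}$ disjoint from $V_0$; then $P_w \cup T$ is red and meets $C_R$ in exactly $\{w\}\cup T = Q$, its remaining $a$ vertices being $S'_w \subseteq V_0$, which is disjoint from $C_R$. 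Collecting one such red hyperedge for each $Q \in \binom{C_R}{b+1}$ produces a hypergraph whose trace on $C_R$ is $K^{(b+1)}_s$, i.e.\ a red $TK^{(b+1)}_{s}$, completing the proof.

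The main obstacle is locating the correct red structure once the blue extension fails; the decisive observation is to size the red core so that $\abs{C_R \setminus D} = b$, which forces every $(b+1)$-subset of $C_R$ to meet $D$ and lets the per-vertex red books $P_w$ cover all required traces simultaneously. The remaining points are routine: the vertex-count check guaranteeing that $W$ exists, and the degenerate small-$t$ cases (where $\abs{V_0}$ may equal $a$), both of which are straightforward.
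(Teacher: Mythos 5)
Your proof is correct and is essentially the paper's own argument: delete a set of $s-b$ vertices, find a blue $TK^{(a+1)}_{t-1}$ in the remainder (or a red win), and when no deleted vertex can be adjoined to its core, assemble a red $TK^{(b+1)}_{s}$ whose core is the deleted set together with $b$ fresh vertices outside the blue core, padding each red hyperedge with $a$ vertices taken from the blue core. The only difference is bookkeeping: the paper fixes the prospective red core $S$ first and quantifies over its $(b+1)$-subsets $B$ (either some $B$ has every completion $A\cup B$, $A\in\binom{X}{a}$, blue, giving the blue extension, or every $B$ has a red completion $A_B\cup B$), whereas you quantify over the deleted vertices $w$ and extract a full red ``book'' $P_w$ from each failed extension; both dichotomies work, and both proofs rely on the same vertex count $N'\ge t-1+b$, which (in your write-up as in the paper's) implicitly assumes $t\ge a+2$, the only regime in which the lemma is later applied.
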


\begin{proof}
Let $N = R^{a+b+1}(T K^{(a+1)}_{t-1},T K^{(b+1)}_{s})+s-b$, and $\h$ be a $2$-edge-colored (blue and red) complete $(a+b+1)$-uniform hypergraph on $N$ vertices. Let $\h'$ be an induced subhypergraph of $\h$ on $R^{a+b+1}(T K^{(a+1)}_{t-1},T K^{(b+1)}_{s})= N-(s-b)$ vertices, obtained by removing a set $Y$ of $s-b$ vertices. Then $\h'$ contains either a blue  $T K^{(a+1)}_{t-1}$ or a red $T K^{(b+1)}_{s}$. 
In the second case we are done, so let us assume that $\h'$ contains a blue  $T K^{(a+1)}_{t-1}$ with core $X$. 
Let $Z$ be a set of $b$ vertices of $\h'$ which does not intersect $X$ (there is such a set since $v(\h') \geq v(T K^{(a+1)}_{t-1}) \geq t-1+b$) and put $S = Y \cup Z.$ 
Consider the edges of the form $A\cup B$ where $A\subseteq X, |X| = a$ and $B \subseteq S, |B| = b+1$. 
If for some fixed $B$, $A \cup B$ is blue for every subset $A$ of $X$ of size $a$, then pick $v \in B \cap Y$, and together with these edges and the edges defining the blue $T K^{(a+1)}_{t-1}$, $X \cup \{v\}$ is the core of a blue  $T K^{(a+1)}_{t-1}$. 
If this is not the case, then for any $B \subset S$ of size $b+1$, there exists $A_B \subseteq S$ such that $A_B \cup B$ is red, and therefore, $S$ together with these edges is the core of a red $T K^{(b+1)}_{s}$.
\end{proof}

\begin{theorem}
Let $t \geq a+1, s\geq b+1$. Then  $R^{a+b+1}(T K^{(a+1)}_{t},T K^{(b+1)}_{s}) \leq (t-a)(s-b) + a + b.$
\end{theorem}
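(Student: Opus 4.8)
The plan is to establish the bound $R^{a+b+1}(T K^{(a+1)}_{t},T K^{(b+1)}_{s}) \leq (t-a)(s-b)+a+b$ by induction on $t$, using Lemma~\ref{weak} as the inductive step. First I would set up the induction on $t$ for fixed $a,b,s$ with $s \geq b+1$. The base case is $t = a+1$, where the blue target $T K^{(a+1)}_{a+1}$ is the smallest possible clique. In this case the formula reads $R^{a+b+1}(T K^{(a+1)}_{a+1}, T K^{(b+1)}_{s}) \leq (s-b)+a+b = s+a$; I would verify this directly, observing that $T K^{(a+1)}_{a+1}$ requires only that some $(a+1)$-subset has all its $a$-subsets realized as blue traces, which a counting or pigeonhole argument on $s+a$ vertices should guarantee unless a red $T K^{(b+1)}_{s}$ already appears.

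For the inductive step, I would assume the statement holds for $t-1$, namely
\begin{displaymath}
R^{a+b+1}(T K^{(a+1)}_{t-1},T K^{(b+1)}_{s}) \leq (t-1-a)(s-b)+a+b.
\end{displaymath}
Applying Lemma~\ref{weak} then gives
\begin{displaymath}
R^{a+b+1}(T K^{(a+1)}_{t},T K^{(b+1)}_{s}) \leq R^{a+b+1}(T K^{(a+1)}_{t-1},T K^{(b+1)}_{s}) + (s-b) \leq (t-1-a)(s-b)+a+b+(s-b),
\end{displaymath}
and since $(t-1-a)(s-b)+(s-b) = (t-a)(s-b)$, the right-hand side equals $(t-a)(s-b)+a+b$, which is exactly the desired bound. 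This telescoping is the heart of the argument, and it is essentially forced by the shape of Lemma~\ref{weak}, whose additive increment of $s-b$ matches the factor multiplying $t$ in the target formula.

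The main obstacle I anticipate is the base case rather than the inductive step. The inductive step is mechanical once Lemma~\ref{weak} is in hand, but I need to confirm that the base value $R^{a+b+1}(T K^{(a+1)}_{a+1},T K^{(b+1)}_{s})$ actually satisfies $\leq s+a$, and more delicately that the hypotheses $t-1 \geq a+1$ required to invoke Lemma~\ref{weak} hold throughout the induction (they do, since we only decrement $t$ down to $a+1$). A secondary subtlety is to make sure the edge cases where $s = b+1$ behave correctly, since then $s-b=1$ and the increment per step is minimal; I would check that Lemma~\ref{weak} remains valid and that the formula stays consistent in this regime. I would present the final theorem as a clean corollary of the lemma plus the base case, keeping the bulk of the technical work inside Lemma~\ref{weak} where the trace-extension argument already lives.
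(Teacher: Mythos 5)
Your proposal matches the paper's proof essentially verbatim: induction on $t$ with base case $t=a+1$ (where the Ramsey number is $s+a$) and Lemma~\ref{weak} supplying the telescoping increment of $s-b$ per step, with the same closing computation. The only discrepancy is cosmetic: the paper simply asserts the base-case value, and your sketch of it slightly misdescribes the target---a blue $T K^{(a+1)}_{a+1}$ is just a single blue hyperedge (whose trace on any $(a+1)$-subset is the unique edge of $K^{(a+1)}_{a+1}$), not a family realizing all $a$-subsets, and on $s+a$ vertices an all-red coloring forces a red $T K^{(b+1)}_{s}$ since each $(b+1)$-subset $B$ of an $s$-set $S$ extends to the red edge $B$ together with the $a$ vertices outside $S$.
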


\begin{proof}
We are going to prove this result by induction on $t$, the base case is where $t = a+1$, we have that $R^{a+b+1}(T K^{(a+1)}_{a+1},T K^{(b+1)}_{s}) = s+a = (s-b) + b + a$, so the result follows.
Now assume that for  some $t-1 \geq a+1 $ the result is true, then by Lemma~\ref{weak} we have that 

\begin{align*}
R^{a+b+1}(T K^{(a+1)}_{t},T K^{(b+1)}_{s}) &\leq R^{a+b+1}(T K^{(a+1)}_{t-1},T K^{(b+1)}_{s})+s-b\\  &\leq (t-1-a)(s-b) + a + b + (s-b) = (t-a)(s-b) + a + b. \qedhere \end{align*}
\end{proof}

\begin{proposition}
\label{tracesus}
Suppose that $t\geq a+1 \geq 3$ and $s\geq 2.$ Then
\begin{displaymath}R^{a+1}(S K^{(a)}_t,T K_s) \leq t + \max\{R^{a+1}(S K^{(a)}_{t-1},T K_s),R^{a+1}(S K^{(a)}_t,T K_{s-1})\}.
\end{displaymath}
\end{proposition}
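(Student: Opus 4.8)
The plan is to mimic the inductive structure used throughout the paper (as in Lemma~\ref{weak} and Lemma~\ref{berge:induction}): remove a batch of $t$ vertices, apply the inductive hypothesis to the remaining hypergraph, and then show that whatever monochromatic trace-structure we find can be extended back using the removed vertices. Set $N = t + \max\{R^{a+1}(S K^{(a)}_{t-1},T K_s),R^{a+1}(S K^{(a)}_t,T K_{s-1})\}$ and let $\h$ be a $2$-edge-colored complete $(a+1)$-uniform hypergraph on $N$ vertices. I would fix a single vertex $v$ and look at the induced subhypergraph $\h'$ on $V \setminus \{v\}$; since $|V(\h')| = N-1 \geq R^{a+1}(S K^{(a)}_t,T K_{s-1})$, $\h'$ contains either a blue $S K^{(a)}_t$ (done) or a red $T K_{s-1}$ with core $X$. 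The goal is then to absorb $v$ into the red trace-clique $X$ to produce a red $T K_s$.

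The mechanism for extending the red $T K_{s-1}$ is where the factor of $t$ and the choice of a blue suspension come in, and this is the main obstacle. Recall that a red $T K_s$ on core $X \cup \{v\}$ requires, for every vertex $w \in X$, a hyperedge $h$ with $h \cap (X\cup\{v\}) = \{v,w\}$ colored red; the presence of $v$ in the core means we must realize the pair $\{v,w\}$ as the trace of a red edge. The natural approach is a local dichotomy at $v$: either there are enough red $(a+1)$-edges through $v$ to witness all the needed pairs $\{v,w\}$, or the blue edges through $v$ are so abundant that they force a blue suspension structure. Concretely, I would consider, for the set $X$ together with the removed vertices, the $a$-subsets of neighborhoods of $v$ and ask whether some $a$-set $A$ has the property that $A \cup \{v\}$ is blue; iterating this over the $t$ reserved vertices is what gives a blue $SK^{(a)}_t$ when red extension fails. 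The suspension vertices of $S K^{(a)}_t$ play the role of a fixed common set, so the asymmetry between the trace-clique (all traces must equal specific pairs) and the suspension (one fixed attachment set) is exactly what makes the two horns of the dichotomy compatible.

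The delicate part will be bookkeeping the removed $t$ vertices correctly so that the blue side really yields a suspension on $t$ vertices rather than $t-1$: this is why the recursion peels off $t$ vertices at the suspension step (the first argument decreases to $t-1$) but only a single trace level at the other step (the second argument decreases to $s-1$). I would set up the argument so that when the red extension of $X$ fails at $v$, I identify a fixed $(a-1)$- or $a$-set of suspension vertices among the reserved block that, together with $v$ and the core $X$, forms blue edges realizing a blue $S K^{(a)}_{t}$; the key counting inequality is that $N-1 \geq R^{a+1}(S K^{(a)}_{t-1},T K_s)$ guarantees a blue $SK^{(a)}_{t-1}$ already present to extend. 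In writing this up I would follow the two-case split of Lemma~\ref{weak}: first handle the case where every required blue suspension-attachment exists (forcing the blue suspension clique to grow from $t-1$ to $t$), and otherwise extract the red trace-pairs through $v$ needed to grow the red trace-clique from $s-1$ to $s$. The main technical risk is ensuring the fixed suspension set is disjoint from the cores and large enough, which is precisely where having a full reserve of $t$ vertices rather than $s-b$ as in Lemma~\ref{weak} is used.
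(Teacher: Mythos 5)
Your dichotomy runs in the wrong direction, and its blue horn does not close. Suppose, as you propose, that you first find a red $TK_{s-1}$ with core $X$ in $\h[V\setminus\{v\}]$ and then fail to absorb $v$: the failure says only that for \emph{some single} $w\in X$, every hyperedge $\{v,w\}\cup B$ with $B\subseteq V\setminus(X\cup\{v\})$, $\abs{B}=a-1$, is blue. These blue edges all contain the pair $\{v,w\}$, so with suspension vertex $v$ and any prospective core $\{w\}\cup D$ they only certify those $a$-subsets of the core that contain $w$; the colors of $A\cup\{v\}$ for $A\subseteq D$, $\abs{A}=a$, are completely uncontrolled, so no blue $SK^{(a)}_t$ follows. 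The natural repair --- reserve a block $W$ of vertices, run the failure argument for each $w\in W$, and pigeonhole on which vertex $x_w\in X$ witnesses the failure, so that $t$ reserved vertices share a common witness $x^*$ (which can then serve as the apex of a blue suspension) --- needs $\abs{W}>(s-1)(t-1)$ reserved vertices, since the red core offers only $s-1$ candidate witnesses; that destroys the claimed $+t$ bound. Your write-up is also internally inconsistent: you delete a single vertex $v$ but then repeatedly invoke ``the removed $t$ vertices'' and ``the reserved block,'' which were never set aside, and the inequality $N-1\ge R^{a+1}(SK^{(a)}_{t-1},TK_s)$ is invoked without any mechanism relating the resulting blue $SK^{(a)}_{t-1}$ to the red core $X$ you already fixed.

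The paper's proof reverses the order, and the asymmetry between the two structures only cooperates in that direction. First apply Ramsey to the \emph{whole} vertex set with the pair $(SK^{(a)}_{t-1},TK_s)$: a red $TK_s$ finishes, so assume a blue $SK^{(a)}_{t-1}$ with core $X$, $\abs{X}=t-1$, and suspension vertex $u$. The dichotomy sits at $u$: either some $v\notin X\cup\{u\}$ has $A\cup\{v,u\}$ blue for every $(a-1)$-set $A\subseteq X$, in which case $X\cup\{v\}$ is the core of a blue $SK^{(a)}_t$ (done); or \emph{every} such $v$ carries one red witness edge $A_v\cup\{v,u\}$ with $A_v\subseteq X$. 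Here a failure per vertex is exactly one red edge, which is precisely the currency a trace clique needs. Now delete the $t$ vertices $X\cup\{u\}$ --- this is what the $+t$ pays for --- and apply Ramsey with $(SK^{(a)}_t,TK_{s-1})$ inside $V'=V\setminus(X\cup\{u\})$; given a red $TK_{s-1}$ with core $Y\subseteq V'$, extend it by adjoining the suspension vertex $u$ (not a deleted vertex): the witness $A_v\cup\{v,u\}$ traces on $Y\cup\{u\}$ to exactly $\{v,u\}$ because $A_v\subseteq X$ is disjoint from $Y$. Your sketch instead adds $v$ to the red core and arranges no disjointness between the witnesses and that core, so even the red horn of your argument cannot be completed as stated.
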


\begin{proof}
Let $\h$ be an $(a+1)$-uniform hypergraph with vertex set $V$ of size 
\begin{displaymath}
N =  t + \max\{R^{a+1}(S K^{(a)}_{t-1},T K_s),R^{a+1}(S K^{(a)}_t,T K_{s-1})\}. 
\end{displaymath}
Since $N > R^{a+1}(S K^{(a)}_{t-1},T K_s)$, it follows that we can find either a blue $S K^{(a)}_{t-1}$ or a red $T K_s$. In the latter case, we are done, so assume there is a blue $S K^{(a)}_{t-1}$ with defining vertices $X$ and suspension vertex $u$.  Now, if for some $v \in V\backslash (X\cup\{u\})$ it holds that for every set $A \subseteq X$ of size $a-1$ we have that $A\cup\{v,u\}$ is blue, then we can add $v$ to $X$ and obtain a blue $S K^{(a)}_t$. Otherwise suppose that for every $v$ we can find a set $A_v$ such that $A_v \cup \{v,u\}$ is red. Let $V' =  V\setminus(X \cup \{u\})$. Note that $\abs{V'} \geq R^{a+1}(S K^{(a)}_t,T K_{s-1})\}$. It follows that we can find either a blue $S K^{(a)}_t$ or a red $T K_{s-1}$ in $\h[V']$. If we find a blue $S K^{(a)}_t$, we are done. Otherwise suppose we can find a red  $T K_{s-1}$ defined on the set $Y$. Then we can extend $Y$ to a red $T K_s$ by adding to $Y$ the vertex $u$ together with the edges $A_v\cup\{v,u\}$ for every $v \in Y$ since $A_v$ does not intersect $V'$. 
\end{proof}

\begin{corollary} Suppose that $t\geq a \geq 2$ and $s \geq 2$. Then
\begin{displaymath} R^{a+1}(S K^{(a)}_t,T K_s) \leq  \binom{t}{2}+(s-1)t.
\end{displaymath}
\end{corollary}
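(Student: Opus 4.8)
The plan is to derive the corollary directly from Proposition~\ref{tracesus} by induction on $t$, unwinding the recursion so that the additive terms accumulate into the claimed closed form. The proposition gives
\begin{displaymath}
R^{a+1}(S K^{(a)}_t,T K_s) \leq t + \max\{R^{a+1}(S K^{(a)}_{t-1},T K_s),R^{a+1}(S K^{(a)}_t,T K_{s-1})\},
\end{displaymath}
so the first step is to argue that the maximum is controlled by the first argument, i.e.\ that decrementing $t$ is the dominant branch. I would set up a double induction on $t+s$ (or fix $s$ and induct on $t$ after establishing a base estimate for the $s$-dependence), using the inductive hypothesis $R^{a+1}(S K^{(a)}_{t'},T K_{s'}) \le \binom{t'}{2}+(s'-1)t'$ for all smaller pairs to bound both terms inside the max.

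The key computation is to verify that the recursion is consistent with the target bound $f(t,s):=\binom{t}{2}+(s-1)t$. Concretely, I would check that
\begin{displaymath}
f(t,s) \;\geq\; t + \max\{f(t-1,s),\,f(t,s-1)\},
\end{displaymath}
which reduces to two inequalities: $f(t,s)-f(t-1,s) = (t-1)+(s-1) \ge t$ (using $s\ge 2$), and $f(t,s)-f(t,s-1) = t \ge t$. Both hold, so the quantity $f(t,s)$ satisfies the same recursive inequality as the Ramsey number, and the induction closes once the base cases are in place.

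For the base cases I would handle $t=a$ (or $t=a+1$, matching the hypothesis $t \ge a$ in the statement) and $s$ small separately, reducing $SK^{(a)}_t$ and $TK_s$ to trivial configurations where the bound is immediate; here the earlier small-parameter analyses and the fact that a suspension clique on few core vertices degenerates should give starting estimates comfortably below $f$. The main obstacle I anticipate is \emph{bookkeeping at the boundary}: ensuring that the recursion from Proposition~\ref{tracesus} can actually be applied (i.e.\ that its hypotheses $t \ge a+1 \ge 3$ and $s \ge 2$ remain satisfied as we decrement) and that the base cases align with the corollary's range $t \ge a \ge 2$. In particular one must be careful that decrementing $s$ does not drop below $s=2$ before the induction terminates, which is why verifying the $f(t,s)-f(t,s-1)=t$ inequality with the correct boundary conventions is the delicate part rather than any genuinely hard combinatorics.
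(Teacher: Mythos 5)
Your proposal is correct and is essentially the paper's own proof: induction on $s+t$ with base cases $s=2$ and $t=a$, applying Proposition~\ref{tracesus} and verifying that $f(t,s)=\binom{t}{2}+(s-1)t$ satisfies $f(t,s)\ge t+\max\{f(t-1,s),f(t,s-1)\}$, exactly as you computed. The only cosmetic slip is the claim that decrementing $t$ is ``the dominant branch'' --- in fact the $s$-decrement branch is the tight one (equality $t\ge t$) --- but since you check both inequalities anyway, nothing is affected.
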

\begin{proof}
This bound follows by induction on $s+t$ from Proposition \ref{tracesus}. The case when $s=2$ or $t=a$ are trivial.  Assume we had the bound for smaller values of $s+t$ and observe that Proposition \ref{tracesus} and induction imply that
\begin{displaymath}
R^{a+1}(S K^{(a)}_t,T K_s) \le t + \max \left(\binom{t-1}{2} + (s-1)(t-1), \binom{t}{2}+(s-2)t\right) \le \binom{t}{2}+(s-1)t,
\end{displaymath}
as required.
\end{proof}

%I called it the suspension vertex, is this what it is supposed to be called?
\begin{proposition}
Suppose that $t\geq a+1$ and $s\geq 2$. Then
\begin{displaymath} R^{a+1}(S K^{(a)}_t,\partial K_s) \ge (s-1)\floor{\frac{t}{a}}+1.
\end{displaymath}
\end{proposition}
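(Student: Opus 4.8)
The plan is to prove this lower bound by exhibiting an explicit $2$-edge-coloring of the complete $(a+1)$-uniform hypergraph on $n := (s-1)\floor{t/a}$ vertices that contains neither a blue $SK^{(a)}_t$ nor a red $\partial K_s$; producing such a coloring shows $R^{a+1}(SK^{(a)}_t,\partial K_s) > n$. Write $m=\floor{t/a}$ and partition the vertex set into $s-1$ parts $V_1,\dots,V_{s-1}$, each of size $m$. I would color a hyperedge $e$ \emph{red} exactly when $e$ is a partial transversal of the partition, i.e.\ $\abs{e\cap V_i}\le 1$ for every $i$ (equivalently, $e$ meets $a+1$ distinct parts), and \emph{blue} otherwise (so every blue edge has at least two of its vertices in a common part).

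The red-avoidance step is short. Every red hyperedge meets each part at most once, so any two vertices lying together in a red edge belong to distinct parts. Hence the red $2$-shadow is a subgraph of the complete $(s-1)$-partite graph with parts $V_1,\dots,V_{s-1}$, whose clique number is $s-1$. A red $\partial K_s$ would require $s$ vertices pairwise covered by red edges, i.e.\ an $s$-clique in the red shadow, which cannot exist.

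The substantive step, and the one I expect to be the main obstacle, is ruling out a blue $SK^{(a)}_t$. Suppose for contradiction such a copy has core $X$ with $\abs{X}=t$ and suspension vertex $u\in V_{i_0}$, so that $A\cup\{u\}$ is blue for every $A\in\binom{X}{a}$. The goal is to produce a single rainbow (hence red) edge of this form. The key counting point is that $u\in V_{i_0}\setminus X$ forces $\abs{X\cap V_{i_0}}\le m-1$, so $\abs{X\setminus V_{i_0}}\ge t-m+1$. Since $am\le t$ by the definition of $m$, this gives $\abs{X\setminus V_{i_0}}\ge (a-1)m+1$, and as each part holds only $m$ vertices, $X\setminus V_{i_0}$ must meet at least $\ceil{((a-1)m+1)/m}\ge a$ parts different from $V_{i_0}$. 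Picking one vertex of $X$ from each of $a$ such parts yields $A\in\binom{X}{a}$ whose vertices lie in $a$ distinct parts, all different from $V_{i_0}$; then $A\cup\{u\}$ occupies $a+1$ distinct parts and is red, contradicting the assumption that all such edges are blue.

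The delicate interaction to watch is between the floor $m=\floor{t/a}$ and the exclusion of the suspension vertex: it is exactly $am\le t$ together with $\abs{X\cap V_{i_0}}\le m-1$ that guarantees $a$ usable parts outside $V_{i_0}$. I would note that the counting is only needed when $n\ge t+1$ (otherwise a $t$-element core together with a distinct suspension vertex cannot fit at all), and in that range $n\ge t+1>am$ forces $s-1>a$, so there genuinely are at least $a$ parts besides $V_{i_0}$ to draw from. The boundary case $a\mid t$ is worth a final glance, since then the $a$ parts hit by $X$ must be completely full and hence cannot equal $V_{i_0}$; and the degenerate small cases (notably $s=2$, where there is a single part, every edge is blue, and no red $\partial K_2$ arises) need only a one-line remark rather than the counting above.
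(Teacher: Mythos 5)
Your proposal is correct and uses essentially the same construction and argument as the paper: the identical partition into $s-1$ parts of size $\floor{t/a}$ with rainbow edges red, the same observation that the red shadow is $(s-1)$-partite, and the same pigeonhole count showing the core plus suspension vertex must hit more than $a$ parts, yielding a red edge of the suspension. The only difference is bookkeeping — you count parts met by $X\setminus V_{i_0}$ while the paper counts parts met by $X\cup\{v\}$ — which does not change the substance.
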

\begin{proof}
Take a vertex set of size $(s-1)\floor{\frac{t}{a}}$ and divide it into $s-1$ classes $V_1,V_2,\dots,V_{s-1}$ of size at most $\floor{\frac{t}{a}}$.  Color every hyperedge which intersects each $V_i$ in at most 1 with red, and color every other hyperedge blue.  Clearly this construction has no red $\partial K_s$.  We will now show it has no blue $S K^{(a)}_t$. Indeed, suppose that $X$ is the core of the blue suspension and $v$ is the suspension vertex.  

Let $V_{i_1},\dots,V_{i_k}$ denote the classes which have nonempty intersection with $X \cup \{v\}$, then $t+1 = \abs{X\cup \{v\}} = \sum_{j = 1}^k \abs{(X \cup \{v\})\cap V_{i_j}} \leq \frac{kt}{a}$. It follows that $k > a$.  Suppose, without loss of generality, that $v \in V_{i_{a+1}}$.  Then we may take $x_j  \in X\cap V_{i_j}$ for $j = 1,\dots,a$ so that the edge $\{x_1,\dots,x_a,v\}$ is red, and thus not a member of a blue suspension, contradiction.
\end{proof}

Thus, we have the following corollaries.

\begin{corollary}
Suppose that $t\geq a+1$ and $s\geq 2$. Then
\begin{displaymath} R^{a+1}(S K^{(a)}_t,T K_s) \ge (s-1)\floor{\frac{t}{a}}+1.
\end{displaymath}
\end{corollary}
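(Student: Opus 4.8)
The plan is to deduce this immediately from the preceding Proposition, which establishes the identical lower bound $(s-1)\floor{\frac{t}{a}}+1$ for $R^{a+1}(S K^{(a)}_t,\partial K_s)$, together with the containment $T K_s \subseteq \partial K_s$ recorded in the Remark following Theorem~\ref{thm:2-shadow}. The point is simply that a more restrictive target family forces a Ramsey number that is at least as large: if $T K_s \subseteq \partial K_s$, then any coloring avoiding a red $\partial K_s$ automatically avoids a red $T K_s$, so $R^{a+1}(S K^{(a)}_t,T K_s) \ge R^{a+1}(S K^{(a)}_t,\partial K_s)$.

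Concretely, I would reuse verbatim the construction from the Proposition: take a vertex set of size $(s-1)\floor{\frac{t}{a}}$ partitioned into classes $V_1,\dots,V_{s-1}$, each of size at most $\floor{\frac{t}{a}}$, color a hyperedge red exactly when it intersects every $V_i$ in at most one vertex, and blue otherwise. The Proposition already verifies that this coloring contains no blue $S K^{(a)}_t$ and no red $\partial K_s$. Since every red $T K_s$ would in particular be a red $\partial K_s$ by the containment $T K_s \subseteq \partial K_s$, this same coloring also contains no red $T K_s$. Hence we exhibit a $2$-coloring of the complete $(a+1)$-uniform hypergraph on $(s-1)\floor{\frac{t}{a}}$ vertices with neither a blue $S K^{(a)}_t$ nor a red $T K_s$, which gives $R^{a+1}(S K^{(a)}_t,T K_s) \ge (s-1)\floor{\frac{t}{a}}+1$.

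There is essentially no obstacle here, since the argument is a one-line consequence of an already-proved lower bound. The only subtlety worth stating explicitly is the direction of the monotonicity: because $T K_s$ is a \emph{subfamily} of $\partial K_s$, forcing a member of $T K_s$ is the harder requirement, so the associated Ramsey number can only increase. This is exactly why the same construction, and hence the same bound, transfers without any additional work.
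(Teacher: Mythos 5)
Your proposal is correct and matches the paper's own reasoning exactly: the paper derives this corollary immediately from the preceding proposition on $R^{a+1}(S K^{(a)}_t,\partial K_s)$ via the containment $TK_s \subseteq \partial K_s$, which is precisely your monotonicity argument. The direction of the inequality is handled correctly (a smaller target family can only increase the Ramsey number), so nothing further is needed.
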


\begin{corollary}
$R^{a+1}(S K^{(a)}_t,T K_t)  = \Theta_{a}(t^2).$
\end{corollary}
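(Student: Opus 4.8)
The plan is to obtain the corollary simply by specializing the two immediately preceding corollaries to the diagonal case $s = t$ and observing that both resulting bounds are quadratic in $t$, with the implied constants permitted to depend on $a$.

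For the upper bound, I would invoke the corollary $R^{a+1}(S K^{(a)}_t, T K_s) \le \binom{t}{2} + (s-1)t$, which applies since $t \geq a \geq 2$. Setting $s = t$ yields
\begin{displaymath}
R^{a+1}(S K^{(a)}_t, T K_t) \le \binom{t}{2} + (t-1)t = \tfrac{3}{2}\, t(t-1) = O(t^2),
\end{displaymath}
so the upper bound is $O_a(t^2)$; in fact the implied constant here does not even depend on $a$.

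For the matching lower bound, I would invoke the corollary $R^{a+1}(S K^{(a)}_t, T K_s) \ge (s-1)\floor{t/a} + 1$, valid for $t \geq a+1$ and $s \geq 2$. Setting $s = t$ and using $\floor{t/a} \ge t/a - 1$ gives
\begin{displaymath}
R^{a+1}(S K^{(a)}_t, T K_t) \ge (t-1)\floor{\tfrac{t}{a}} + 1 \ge (t-1)\lp \tfrac{t}{a} - 1 \rp + 1 = \Omega_a(t^2),
\end{displaymath}
where the implied constant is of order $1/a$.

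Combining the two displays yields $R^{a+1}(S K^{(a)}_t, T K_t) = \Theta_a(t^2)$. There is essentially no obstacle here, since the substantive work is already carried out in the two source corollaries; the only point worth verifying is that the hypotheses $t \geq a$ (for the upper bound) and $t \geq a+1$ (for the lower bound) are satisfied, which holds for every $t$ large enough in terms of $a$, consistent with the asymptotic $\Theta_a$ notation.
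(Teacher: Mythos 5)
Your proposal is correct and is exactly the paper's (implicit) derivation: the corollary follows by setting $s=t$ in the two preceding corollaries, giving $R^{a+1}(S K^{(a)}_t,T K_t) \le \binom{t}{2}+(t-1)t = O(t^2)$ and $R^{a+1}(S K^{(a)}_t,T K_t) \ge (t-1)\floor{\frac{t}{a}}+1 = \Omega_a(t^2)$. Your verification of the hypotheses $t\ge a$ and $t\ge a+1$ for large $t$ is the only point that needs checking, and you handled it correctly.
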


\begin{proposition} Suppose that $t\geq a+2$ and $s\geq b+2$. Then
\begin{displaymath} R^{a+b+1}(HK^{(a+1)}_t,TK^{(b+1)}_s) \leq M + t+b\binom{t}{a+1} - b,\end{displaymath} where $M = \max \left(R^{a+b+1}(HK^{(a+1)}_{t-1},TK^{(b+1)}_s),R^{a+b+1}(HK^{(a+1)}_t,TK^{(b+1)}_{s-1})\right).$

\end{proposition}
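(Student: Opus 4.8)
The plan is to mimic the recursive structure used in Lemma~\ref{weak} and Proposition~\ref{tracesus}, peeling off one vertex-class at a time from the target $HK^{(a+1)}_t$ while controlling the expansion vertices. Let $N = M + t + b\binom{t}{a+1} - b$ and take a $2$-edge-colored complete $(a+b+1)$-uniform hypergraph $\h$ on $N$ vertices. The idea is to set aside a reservoir of roughly $t + b\binom{t}{a+1} - b$ vertices to be used as potential expansion/suspension vertices, and apply the induction hypothesis to the remaining $M$-or-more vertices to extract either a blue $HK^{(a+1)}_{t-1}$ (with core $X$) or a red $TK^{(b+1)}_{s}$ (done in the latter case). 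The reservoir size is chosen precisely so that when we try to grow the blue $(a+1)$-expansion clique on $t-1$ core vertices into one on $t$ core vertices, we have enough fresh expansion vertices available: each of the $\binom{t}{a+1}$ core $(a+1)$-subsets of the enlarged clique needs its own $b$ private expansion vertices disjoint from everything, which accounts for the $b\binom{t}{a+1}$ term, while the extra $t$ accommodates the new core vertex together with its incident edges.

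The key dichotomy, as in the earlier proofs, is the following. After obtaining a blue $HK^{(a+1)}_{t-1}$ with core $X$, I would pick a candidate new core vertex $v$ from the reservoir and examine, for each $a$-subset $A \subseteq X$, whether the edges $A \cup \{v\}$ (together with fresh expansion vertices) can be completed in blue. First I would try: if for some $v$ every $(a+1)$-set $A \cup \{v\}$ with $A \in \binom{X}{a}$ can be extended to a blue hyperedge using fresh, pairwise-disjoint expansion vertices from the reservoir, then $X \cup \{v\}$ becomes the core of a blue $HK^{(a+1)}_t$ and we are done. Otherwise, for every candidate $v$ there is some $a$-set $A_v \subseteq X$ such that no blue completion exists, i.e. all the $(b)$-fold expansions of $A_v \cup \{v\}$ over the available reservoir vertices are red; I would then harvest these red edges to build a red $TK^{(b+1)}_s$. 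Here one would invoke the second branch of the induction, $R^{a+b+1}(HK^{(a+1)}_t, TK^{(b+1)}_{s-1})$, on the leftover vertex set $V'$ to obtain a red $TK^{(b+1)}_{s-1}$ on a core $Y$, and then extend $Y$ by one vertex using the red edges forced by the $A_v$'s, exactly in the spirit of the extension step in Proposition~\ref{tracesus}.

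The main obstacle I anticipate is the bookkeeping of \emph{disjointness of expansion vertices}: unlike a suspension (which reuses a single fixed vertex set $U$) or a trace (which imposes no disjointness), an expansion $HK^{(a+1)}_t$ demands $b$ \emph{distinct} new vertices per hyperedge, all disjoint from the core and from each other across the $\binom{t}{a+1}$ edges. Thus the counting that guarantees ``enough fresh reservoir vertices remain'' must be done carefully: when growing from $HK^{(a+1)}_{t-1}$ to $HK^{(a+1)}_t$, only the new hyperedges (those containing $v$) need fresh expansion vertices, so I would track that exactly $\binom{t-1}{a}$ new core $(a+1)$-sets appear, each requiring $b$ fresh vertices, and verify that the reservoir of size $t + b\binom{t}{a+1} - b$ dominates this demand with room to spare for the trace-extension step in the red case. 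The trace side is comparatively forgiving since traces need no private vertices, so I expect the binding constraint on $N$ to come from the blue expansion-growing step; matching the stated bound $M + t + b\binom{t}{a+1} - b$ is essentially the verification that these two demands fit inside the reservoir simultaneously.
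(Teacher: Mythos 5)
Your overall skeleton (induct via $M$, try to grow the blue $HK^{(a+1)}_{t-1}$ by one core vertex $v$, and on failure harvest forced red edges to extend a red trace clique) matches the paper's strategy, but the reservoir structure you impose creates a genuine gap in the red case. You restrict the search for blue completions of $A\cup\{v\}$ to $b$-sets inside a reservoir of size $t+b\binom{t}{a+1}-b$; consequently, when this search fails for some $A_v$, the only information you obtain is that $A_v\cup\{v\}\cup B$ is red for $b$-sets $B$ inside the (still available part of the) reservoir. But the red $TK^{(b+1)}_{s-1}$ must be produced by applying the bound $M \geq R^{a+b+1}(HK^{(a+1)}_t, TK^{(b+1)}_{s-1})$ to a vertex set of size at least $M$, and the reservoir is far too small for that: the all-red coloring shows $R^{a+b+1}(HK^{(a+1)}_t, TK^{(b+1)}_{s-1}) \geq s+a-1$, which exceeds $t+b\binom{t}{a+1}-b$ once $s$ is large relative to $t$. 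So the core $Y$ of your red $TK^{(b+1)}_{s-1}$ necessarily contains vertices outside the reservoir, and for $b$-subsets $B\subseteq Y$ containing such vertices you know nothing about the color of $A_v\cup\{v\}\cup B$; the extension of $Y$ by $v$ therefore cannot be completed. This is not the disjointness bookkeeping you flagged as the main obstacle (that counting is fine); it is a mismatch between the set on which your failure condition holds and the set in which the red structure lives.

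The paper's proof avoids this by not pre-partitioning at all. It fixes a single vertex $v\notin X\cup X'$ and greedily processes the $a$-sets $A_1,\dots,A_{\binom{t-1}{a}}$ of $X$, at each step looking for a blue completion $B_i$ among \emph{all} currently unused vertices $V_i$ and deleting $B_i$ upon success. If the process stalls at index $i$, then \emph{every} $b$-set $B\subseteq V_i$ gives a red edge $A_i\cup\{v\}\cup B$, and the accounting
$$|V_i| \;\geq\; N-t-b\binom{t-1}{a+1}-\left(\binom{t-1}{a}-1\right)b \;=\; N - \left(t+b\binom{t}{a+1}-b\right) \;=\; M$$
(using Pascal's identity; this is exactly where the additive term in the bound comes from) lets one apply the definition of $M$ to $V_i$ \emph{itself}. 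The red $TK^{(b+1)}_{s-1}$ is then found with core $Y\subseteq V_i$, so every $b$-subset of $Y$ is covered by the failure condition, and $Y\cup\{v\}$ is the core of a red $TK^{(b+1)}_s$. If you replace your fixed reservoir by this ``search everywhere, delete what you use'' process, your argument goes through and yields the stated bound.
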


\begin{proof}
Let $\h$ be an $(a+b+1)$-uniform hypergraph with vertex set $V$ of size 
\begin{displaymath}
N =  M + t+b\binom{t}{a+1} - b. 
\end{displaymath}
Since $N > M$, we can find either a blue $HK^{(a+1)}_{t-1}$ or a red $TK^{(b+1)}_s$. If the latter case occurs we are done, so assume there is a blue $H K^{(a+1)}_{t-1}$ with core $X$ of size $t-1$ and set of expansion vertices $X'$ of size $\binom{t-1}{a+1}b$. Now let $v$ be a vertex not in $X \cup X'$. We will try to extend $X$ together with $v$. Let $A_1,A_2,\dots,A_{\binom{t-1}{a}}$ be an ordering of the subsets of $X$ of size $a$. Let $V_1 = V\backslash(X \cup X' \cup \{v\})$ and set $X_1 = X'$. For each $i = 1, 2\dots,\binom{t-1}{a}$, if there is a set $B_i$ of size $b$ in $V_i$ such that that $B_i \cup A_i \cup \{v\}$ is blue, then set $V_{i+1} = V_i \backslash B_i$ and $X_{i+1} = X_i \cup B_i$, otherwise we stop. If we can do this for every $i$ then the set $X\cup\{v\}$ defines a blue $HK^{(a+1)}_t$ with expansion set $X_{\binom{t-1}{a}}$. If not, then there is an index $i$ such that we have to stop. This means that for every set $B$ of size $b$ in $V_i$ we have that $A_i \cup B \cup \{v\}$ is red. Now the size of $V_i$ is $N-(t-1) - \binom{t-1}{a+1}b - (i-1)b -1 \geq  N-t - \binom{t-1}{a+1}b - (\binom{t-1}{a}-1)b = M.$ So by the definition of $M$, we can find either a blue $HK^{(a+1)}_t$ using $V_i$ or a red $TK^{(b+1)}_{s-1}$. In the first case we are done, so suppose we have a red $TK^{(b+1)}_{s-1}$ with defining vertices $Y$. Now we can extend $Y$ together with $v$ to a red $TK^{(b+1)}_s$, since for every $B \subseteq Y$ of size $b$ we have that the edge $B\cup A_i \cup \{v\}$ is red.
\end{proof}

\begin{corollary}
Suppose that $t\geq a+1$ and $s\geq b+1$. Then
\begin{displaymath}
R^{a+b+1}(HK^{(a+1)}_t,TK^{(b+1)}_s) \le b \binom{t+1}{a+2}+\binom{t+1}{2} -tb+s\left(b\binom{t}{a+1}+t-b\right).
\end{displaymath}
\end{corollary}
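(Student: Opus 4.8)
The plan is to deduce the bound from the preceding proposition by a double induction on $s+t$, treating that proposition as the recursive engine. Abbreviate $R(t,s):=R^{a+b+1}(HK^{(a+1)}_t,TK^{(b+1)}_s)$ and let $g(t,s)$ denote the claimed upper bound,
\begin{displaymath}
g(t,s) = b\binom{t+1}{a+2}+\binom{t+1}{2}-tb+s\left(b\binom{t}{a+1}+t-b\right),
\end{displaymath}
so the goal is $R(t,s)\le g(t,s)$ for all $t\ge a+1$ and $s\ge b+1$. Since the proposition's recurrence requires $t\ge a+2$ \emph{and} $s\ge b+2$, the induction has two boundary families, $t=a+1$ and $s=b+1$, each of which must be established directly.

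For the base cases I would exploit that both extremal objects degenerate to a single hyperedge. When $t=a+1$ the hypergraph $HK^{(a+1)}_{a+1}$ is a single $(a+b+1)$-edge, so avoiding a blue copy forces an all-red coloring; the least $N$ that then guarantees a red $TK^{(b+1)}_s$ in the complete hypergraph is $s+a$, because an $s$-vertex core needs only $a$ extra vertices (which may be reused across edges) to realize every $(b+1)$-subset as a trace. A direct substitution gives $s+a\le g(a+1,s)$, using $s-1\ge b$. Symmetrically, when $s=b+1$ the object $TK^{(b+1)}_{b+1}$ is a single $(a+b+1)$-edge, so avoiding a red copy forces an all-blue coloring, and the least $N$ guaranteeing a blue $HK^{(a+1)}_t$ is simply its vertex count $t+b\binom{t}{a+1}$; another direct substitution yields $t+b\binom{t}{a+1}\le g(t,b+1)$.

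For the inductive step, with $t\ge a+2$ and $s\ge b+2$, the proposition together with the inductive hypothesis gives
\begin{displaymath}
R(t,s)\le \max\bigl(g(t-1,s),\,g(t,s-1)\bigr)+t+b\binom{t}{a+1}-b,
\end{displaymath}
so it remains to check the two inequalities $g(t-1,s)+t+b\binom{t}{a+1}-b\le g(t,s)$ and $g(t,s-1)+t+b\binom{t}{a+1}-b\le g(t,s)$. The $s$-direction holds with equality, since $g(t,s)-g(t,s-1)=b\binom{t}{a+1}+t-b$ is exactly the increment supplied by the proposition; this is why the coefficient of $s$ in $g$ takes the precise form it does. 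The $t$-direction is where the two leading binomial terms of $g$ earn their keep: applying Pascal's identities $\binom{t+1}{a+2}-\binom{t}{a+2}=\binom{t}{a+1}$, $\binom{t+1}{2}-\binom{t}{2}=t$, and $\binom{t}{a+1}-\binom{t-1}{a+1}=\binom{t-1}{a}$, the difference $g(t,s)-g(t-1,s)$ collapses to $t+b\binom{t}{a+1}-b+sb\binom{t-1}{a}+s$, which exceeds the required increment by the nonnegative slack $sb\binom{t-1}{a}+s$. I expect no genuine obstacle here: the work is entirely binomial bookkeeping, and the only point demanding care is aligning the two boundary families with the interior recurrence, since the proposition is inapplicable on either boundary.
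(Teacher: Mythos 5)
Your proposal is correct and follows exactly the route the paper intends: the corollary is stated right after the proposition and (like the analogous earlier corollary for $R^{a+1}(SK^{(a)}_t,TK_s)$) is meant to follow by induction on $s+t$ from that recurrence, with the single-edge degenerate cases $t=a+1$ (giving $R\le s+a$) and $s=b+1$ (giving $R\le t+b\binom{t}{a+1}$) as boundaries. Your verification that the $s$-direction increment $b\binom{t}{a+1}+t-b$ matches the recurrence exactly while the $t$-direction leaves nonnegative slack $sb\binom{t-1}{a}+s$ is precisely the bookkeeping the paper omits.
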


\section{Ramsey number of expansion and suspension hypergraphs} \label{sc:exp-susp}

\subsection{Expansion hypergraphs and Proof of Theorem \ref{th:expansion-upper}}

In this section, we give an upper bound on $R^3(HK_{t}, HK_s)$. Recall that $H^r(K_t)$ is the the $r$-graph obtained from the complete graph $K_t$ by enlarging each edge by a set of $(r-2)$ distict new vertices.
Moreover, $R^r(H^r(K_t), H^r(K_t))$ is the smallest integer $n$ such that every $2$-edge-coloring of the complete $r$-uniform hypergraph $\cH$ on $n$ vertices contains a monochromatic $H^r(K_t)$. For ease of reference, we will use $R^r(HK_t, HK_t)$ to denote $R^r(H^r(K_t), H^r(K_t))$.
%In this subsection, we also ignore the superscript and use $R(HK_{t}, HK_s)$ to denote $R^3(HK_{t}, HK_s)$. -Oscar Seems to me we don't do this
We first prove the following lemma.

\begin{lemma}\label{th:ex:induct}
For $s,t \geq 2$, we have that
\begin{displaymath}
R^3(HK_{t+1}, HK_{s+1}) \leq \max\{R^3(HK_{t+1},HK_s), R^3(HK_{t},HK_{s+1})\} + 2st.
\end{displaymath}
\end{lemma}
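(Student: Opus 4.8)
The plan is to prove this by the standard Ramsey-style induction-on-the-core argument, mirroring the structure of Lemma~\ref{weak} and Proposition~\ref{tracesus}. Let $\h$ be a $2$-edge-colored complete $3$-uniform hypergraph on $N := \max\{R^3(HK_{t+1},HK_s),R^3(HK_t,HK_{s+1})\}+2st$ vertices. Since $N$ exceeds $R^3(HK_{t+1},HK_s)$, we can find either a blue $HK_{t+1}$ (in which case we are done) or a red $HK_s$. So assume the complementary situation: we have a blue $HK_{t+1}$ with core $X$ of size $t+1$, together with its set of expansion vertices, and we want to locate a red $HK_{s+1}$.

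First I would isolate a large set of vertices disjoint from the blue structure already found. The core $X$ uses $t+1$ vertices and the expansion vertices of the blue $HK_{t+1}$ number $\binom{t+1}{2}$, so the relevant reservoir is what remains after deleting these. The key idea is to attempt to grow the \emph{red} copy: pick a vertex $v$ outside everything used so far, enumerate the $\binom{s}{1}$ or rather the pairs of $X$ that must be covered, and greedily try to assign disjoint fresh expansion vertices realizing red hyperedges $\{x,y,w\}$ for each pair needed. Precisely, I would run a greedy matching process over the at most $\binom{s}{2}$ pairs we wish to make red, each time consuming one new expansion vertex from the reservoir; the term $2st$ is exactly the slack needed so that the reservoir stays large enough to either complete the extension or else force a dichotomy. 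At each pair, either there is a fresh vertex $w$ making $\{x,y,w\}$ red and we proceed, or every candidate completion is blue.

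The heart of the argument is this dichotomy. If the greedy red-extension succeeds for all required pairs, then together with $v$ we obtain a red $HK_{s+1}$ and are done. If it stalls at some pair $\{x,y\}$, that means every remaining reservoir vertex $w$ gives a \emph{blue} hyperedge $\{x,y,w\}$; I would then use this abundance of blue edges through the fixed pair $\{x,y\}$ to enlarge the blue core from $HK_t$ to $HK_{t+1}$. Here I would invoke the second half of the recursion: on the reservoir (whose size I must verify is at least $R^3(HK_t,HK_{s+1})$, which is where the $+2st$ buffer is spent) we find either a red $HK_{s+1}$ (done) or a blue $HK_t$, and the stalling pair's many blue completions let us adjoin a new core vertex with all its $t$ new edges realized by disjoint fresh expansion vertices, upgrading the blue $HK_t$ to a blue $HK_{t+1}$.

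The main obstacle I anticipate is the careful bookkeeping of expansion vertices: unlike in the trace or Berge settings, expansion copies demand that \emph{every} hyperedge get its own distinct new vertices, so I must ensure the greedy process never reuses a vertex and that the reservoir is provably large enough at every stage. The constant $2st$ must be shown to dominate the total number of expansion vertices consumed (roughly $\binom{t+1}{2}$ for the initial blue copy plus $\binom{s}{2}$ or $\binom{s+1}{2}$ for the red extension plus $t$ for the final upgrade), and I would need to confirm the inequality $N - (\text{used so far}) \ge \max\{R^3(HK_{t+1},HK_s),R^3(HK_t,HK_{s+1})\}$ holds at the exact moment each recursive call is made. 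Making the disjointness and the counting align cleanly is the delicate part; the logical skeleton is otherwise routine.
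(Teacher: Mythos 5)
Your setup already contains a slip: since the target is $R^3(HK_{t+1},HK_{s+1})$, finding a blue $HK_{t+1}$ ends the proof, so the surviving case after invoking $R^3(HK_{t+1},HK_s)$ is a \emph{red} $HK_s$ (core of size $s$), not a blue $HK_{t+1}$; I will read that as a typo. The genuine gap is in your dichotomy. When the greedy red extension stalls at a pair $\{x,y\}$, you learn that $\{x,y,w\}$ is blue for every remaining vertex $w$, and you claim these many blue completions ``let us adjoin a new core vertex with all its $t$ new edges realized by disjoint fresh expansion vertices.'' This step fails. Every blue hyperedge you know about contains the same two vertices $x,y$. To adjoin, say, $x$ to a blue $HK_t$ with core $Y=\{y_1,\dots,y_t\}$ found in the reservoir, you need $t$ blue hyperedges $\{x,y_j,\ast_j\}$ whose expansion vertices $\ast_j$ are pairwise distinct and fresh; the only such hyperedges your information supplies are $\{x,y,y_j\}$, which would force the single vertex $y$ to serve as the expansion vertex of all $t$ new core edges, violating the defining property of an expansion. (For Berge copies this reuse would be harmless, which is exactly why this inverted dichotomy is tempting; for $HK_t$ it is fatal.) Iterating over many stalled vertices does not repair it: two stalled vertices $v_i,v_j$ still have no known blue hyperedge realizing the core edge $\{v_i,v_j\}$ except ones whose third vertex is a shared stalling partner, so the distinctness requirement fails again.

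The paper runs the dichotomy in the opposite direction, via Hall's theorem on the \emph{blue} side. It first finds a blue $HK_t$ (using $R^3(HK_t,HK_{s+1})$, the other recursive call, applied first), and for each reserved vertex $v$ forms the bipartite graph between the blue core and the fresh vertices, joining $u_i$ to $w$ when $\{v,u_i,w\}$ is blue. Either Hall's condition holds, yielding a system of distinct representatives that extends the blue $HK_t$ to a blue $HK_{t+1}$, or some core vertex $u$ satisfies: $\{v,u,w\}$ is red for all but at most $t-1$ fresh $w$. Pigeonholing over the $2st$ reserved vertices produces $s$ vertices $w_1,\dots,w_s$ having this mostly-red relation to the \emph{same} core vertex $u$; after deleting the few bad completions $M(w_i)$, a red $HK_s$ with core $\{v_1,\dots,v_s\}$ is found in what remains, and the hyperedges $\{u,w_i,v_i\}$ are red with the $w_i$ serving as $s$ pairwise distinct fresh expansion vertices, giving the red $HK_{s+1}$. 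Your proposal has no mechanism that produces $s$ distinct fresh red expansion vertices in the failure branch --- that is the actual content of the lemma, and it is precisely what the Hall-plus-pigeonhole argument (and the size $2st$ of the reserved set) is for.
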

\begin{proof}
Without loss of generality, we assume that $t\leq s$.
Let
\begin{displaymath}
N = \max\{R^3(HK_{t+1},HK_s), R^3(HK_{t},HK_{s+1})\} + 2st 
\end{displaymath}
and  $\cH_N$ be a $2$-edge-colored compete $3$-uniform hypergraph on $N$ vertices. 
Let 
\begin{displaymath}
W = \{v_1, v_2, \ldots, v_{2st}\} \subset V(\cH_N)
\end{displaymath}
and $\cH' = \cH[V(\cH_N) \backslash W]$.

Note that $|\cH'| \geq R^3(HK_{t}, HK_{s+1})$. Thus by definition of Ramsey number, there exists either a blue expansion of $K_{t}$ or a red expansion of $K_{s+1}$. If the latter happens, we are done. Thus assume that we have a blue expansion $\cH_b$ of $K_t$. Note that $\cH_b$ has $\binom{t}{2} + t$ vertices. Let $\{u_1,\ldots u_t\}$ be the core of $\cH_b$. Let $F= V(\cH)\backslash V(\cH_b)$.

\begin{claim}\label{cl:ex:induct}
Suppose that $\cH_N$ does not have a blue expansion of $K_{t+1}$. Then for every $v \in W$, there exists some $u$ in the core of $\cH_b$ such that $\{v,u,w\}$ is colored red for all $w$ except at most $(t-1)$ elements from $F\backslash\{v\}$.
\end{claim}

\begin{proof}	
Fix a vertex $v \in W$. Construct a bipartite graph $G= A \cup B$ where $A = \{u_1, \ldots, u_t\}$ and $B =F\backslash\{v\}$. For $u_i\in A$, $w \in B$, $u_i$ is adjacent to $w$ in $G$ if and only if $\{v,u_i,w\}$ is a blue edge in $\cH_N$.  Note that for every $w \in B$, $d_G(w) \leq t$. Therefore, if $d_G(u_i) \geq t$ for every $u_i \in A$, then there exists a matching of $A$ in $G$ by Hall's theorem, which implies that we can extend $\cH_b$ to a blue expansion of $K_{t+1}$ by adding $v$ into the core of $\cH_b$. This contradicts our assumption that $\cH_N$ does not have a blue expansion of $K_{t+1}$. Hence it follows that there exists a vertex $v' \in A$ such that $\{v,v',w\}$ is colored red for all except $t-1$ elements of $F\backslash\{v\}$. This finishes the proof of Claim~\ref{cl:ex:induct}
\end{proof}

Now since $\abs{W}  = 2st$, by pigeonhole principle, there exists some $u$ in the core of $\cH_b$ so that there exists $W_u = \{w_1, w_2, \ldots, w_s\}$ such that for any $w \in W_u$, the hyperedge $\{w,u, w'\}$ is red for all $w'$ except at most $(t-1)$ elements of $F\backslash\{w\}$. Let $M(w_i)$ be the elements $w'$ in $W$ such that $\{u,w_i, w'\}$ is blue. 

Now let $W' = W_u \cup V(\cH_b) \cup \bigcup_{i=1}^s M(w_i)$ and $\cH'' = \cH_N[V(\cH_N) \backslash W']$. Note that\\ $|\cH''| \geq R^3(HK_{t+1}, HK_s)$ since $2st \geq st + \binom{t}{2}+t$. Hence there either exists a blue expansion of $K_{t+1}$ or there exists a red expansion of $K_s$. If the former happens, we are done. Hence assume we have a red expansion $\cH_r$ of $K_s$. 
Suppose $\{v_1, v_2, \ldots, v_s\}$ is the core of $\cH_r$.
Now we can extend $\cH_r$ to a red expansion of $K_{s+1}$ by adding $u$ into the core of $\cH_r$ together with the red edges in $\{\{u,w_i, v_i\}: i\in [s]\}$. This completes the proof of the lemma.
\end{proof}

Now we are ready to show that $R^3(HK_t, HK_s) \leq 2(s+t) st$. The proof is by induction on $s + t$. We first show that $R^3(HK_2, HK_s) \leq 4s^2 + 8s$. This is clearly true since any blue edge in a 3-uniform hypergraph is a blue expansion of $K_2$. Hence given any $2$-edge-colored complete 3-uniform hypergraph $\cH$ with $4s^2 + 8s$ vertices, if there is no blue edge, then all edges are red, which implies that we have a red expansion of $K_s$, since $4s^2 + 8s \geq \binom{s}{2}+s$. Similarly, $R^3(HK_t, HK_2) \leq 4t^2 + 8t$.

Now assume the theorem holds for $HK_{t'}, HK_{s'}$ such that $t'+s' < t+s$. Without loss of generality, assume that $t\leq s$. Then by the Lemma~\ref{th:ex:induct}, 
\begin{align*}
R^3(HK_{t}, HK_{s}) &\leq \max\{R^3(HK_{t},HK_{s-1}), R^3(HK_{t-1},HK_{s})\} + 2(s-1)(t-1)\\
						    &\leq 2(s+t-1)t(s-1) + 2(s-1)(t-1)\\
						    &\leq 2st(s+t).
\end{align*}
Hence we are done by induction.

\subsection{Ramsey number of suspension hypergraphs}

Recall that the $r$-suspension $SK_t$ of the complete graph $K_t$, is the $r$-uniform hypergraph formed by adding a single fixed set of $r-2$ distinct new vertices to every edge in $K_t$.
Clearly, $R^{r}(SK_t, SK_t) \leq R^2(K_t,K_t)+(r-2)$. The proof is simple: let $\cH$ be a $2$-edge-colored $K^{(r)}_{R^2(K_t, K_t)+(r-2)}$. Fix a set of $(r-2)$ vertices $S$ and consider the complete graph $G$ on the remaining $R^2(K_t, K_t)$ vertices, where the color of an edge $e$ in $G$ is the same color as the hyperedge $e\cup S$ in $\cH$. By the definition of the Ramsey number, there exists a monochromatic clique in $G$, which gives us the core of the monochromatic $SK_t$ in $\cH$. 

Before we prove the lower bound, let us recall the symmetric version of the Lov\'{a}sz local lemma~\cite{Alon-Spencer}:

\begin{quotation}\it
  Let ${\mathcal  {A}}=\{A_{1},\ldots ,A_{q}\}$ be a finite set of events in the probability space $\Omega$. Suppose that each event $A_i$ is mutually independent of a set of all but at most $d$ of the other events
$A_j$, and that $\Pr(A_i) \leq p$ for all $1\leq i\leq q$. If 
$$ep(d+1)<1,$$
then $$\Pr \lp \displaystyle\bigwedge_{i=1}^q \overline{A_i}\rp > 0.$$
\end{quotation}

Now we can show a lower bound of $R^{r}(SK_t, SK_t)$ with the local lemma.

\begin{proposition}
Fix $t\geq r \geq 3$. If $$e \lp 1 + \binom{t}{2} \binom{r}{2} \binom{n}{t-2}\rp 2^{1-\binom{t}{2}} <1,$$
then $R^r(SK_t, SK_t) > n$.
\end{proposition}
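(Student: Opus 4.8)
The plan is to use the Lov\'{a}sz local lemma to show that a random $2$-coloring of the complete $r$-uniform hypergraph on $n$ vertices avoids a monochromatic $SK_t$ with positive probability, which immediately gives $R^r(SK_t,SK_t) > n$. First I would color each hyperedge of $K_n^{(r)}$ independently red or blue, each with probability $1/2$. For every potential monochromatic suspension, I define a bad event and then bound both the probability of each bad event and the dependency degree so that the hypothesis $ep(d+1) < 1$ of the local lemma applies.

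The key structural observation is how to index the bad events. A copy of $SK_t$ is determined by a choice of $t-2$ suspension vertices together with $t$ core vertices, where the $\binom{t}{2}$ hyperedges are $\{u,v\}\cup S$ for the fixed suspension set $S$ and each pair $\{u,v\}$ of core vertices. However, to get a clean dependency bound I would instead index events by the set of $\binom{t}{2}$ hyperedges making up a suspension, and declare $A_i$ to be the event that the $i$-th such candidate suspension is monochromatic. Since each suspension consists of $\binom{t}{2}$ distinct hyperedges whose colors are independent, I get $\Pr(A_i) = 2\cdot 2^{-\binom{t}{2}} = 2^{1-\binom{t}{2}} =: p$, matching the exponent in the statement.

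Next I would bound the dependency degree $d$. Two suspension-events are dependent only if the two suspensions share at least one hyperedge. A single hyperedge $h$ of size $r$ can lie in a suspension only by being written as $\{u,v\}\cup S$ with $\{u,v\}$ a core pair and $S$ the suspension set; there are $\binom{r}{2}$ ways to split the $r$ vertices of $h$ into such a core pair and suspension set. Having fixed one core pair $\{u,v\}$ and suspension set $S$ inside $h$, a suspension containing $h$ is then determined by choosing the remaining $t-2$ core vertices from the at most $n$ vertices, giving at most $\binom{n}{t-2}$ choices. Summing over the $\binom{t}{2}$ hyperedges of a fixed suspension, the number of other suspensions meeting it is at most $\binom{t}{2}\binom{r}{2}\binom{n}{t-2}$, so each event is mutually independent of all but at most $d = \binom{t}{2}\binom{r}{2}\binom{n}{t-2}$ others. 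Then $d+1 \le 1 + \binom{t}{2}\binom{r}{2}\binom{n}{t-2}$, and the local lemma hypothesis $ep(d+1)<1$ becomes exactly the stated inequality. By the local lemma, $\Pr\lp\bigwedge_i \overline{A_i}\rp > 0$, so some coloring has no monochromatic $SK_t$, proving $R^r(SK_t,SK_t) > n$.

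The main obstacle is getting the dependency count right without overcounting or undercounting: one must be careful that the dependency degree counts \emph{distinct suspensions} sharing an edge rather than (edge, suspension) incidences, and that the factor $\binom{r}{2}$ correctly accounts for all ways a single $r$-set can host a core pair. A minor subtlety is whether to allow a suspension to be counted as dependent on itself; absorbing this into the ``$+1$'' in $d+1$ (as the statement's inequality does) handles it cleanly. Everything else is a routine substitution into the symmetric local lemma stated just above.
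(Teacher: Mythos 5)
Your proposal is correct and follows essentially the same route as the paper: a uniform random $2$-coloring, bad events indexed by copies of $SK_t$ with $p = 2^{1-\binom{t}{2}}$, and the dependency bound $d \le \binom{t}{2}\binom{r}{2}\binom{n}{t-2}$ obtained by choosing a shared hyperedge, the $\binom{r}{2}$ ways to split it into a core pair and suspension set, and the $\binom{n}{t-2}$ remaining core vertices, followed by the symmetric Lov\'asz local lemma. The only cosmetic difference is that the paper indexes events by pairs $(S,T)$ of suspension and core vertex sets rather than by edge sets, which changes nothing in the argument.
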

\begin{proof}
Let $\cH$ be a complete $r$-uniform hypergraph on $n$ vertices. Color each hyperedge blue or red randomly and independently with probability $\frac{1}{2}$. For a set of $r-2$ vertices $S$ and another set of $t$ vertices $T$ disjoint from $S$, let $A_{S,T}$ be the event that the suspension hypergraph $\cH_{S,T}$ with $T$ as core and $S$ as the suspending vertex set is monochromatic. Note that for each fixed $S,T$, $$\Pr(A_{S,T}) = 2^{1-\binom{t}{2}} = p.$$

Note that $A_{S,T}$ is mutually independent of all other events $A_{S',T'}$ satisfying \\ $E(\cH_{S,T}) \cap E(\cH_{S',T'}) = \emptyset$. Let us give an upper bound on the number of events $A_{S',T'}$ that $A_{S,T}$ is mutually dependent of. There are $\binom{t}{2}$ choices to pick an edge they share, which contains $r$ vertices.
Among the $r$ vertices, $r-2$ of them must be the suspension vertices. There are $\binom{r}{r-2}$ ways to choose the suspension vertices $S'$. There are then at most $\binom{n}{t-2}$ ways to choose the remaining $t-2$ vertices of $T$. Hence it follows that 
$$d \leq \binom{t}{2} \binom{r}{2} \binom{n}{t-2}.$$

By the Lov\'{a}sz local lemma, it follows then that if $ep(d+1) <1$, we have that 
$$\Pr\lp \displaystyle\bigwedge_{S,T} \overline{A_{S,T}} \rp > 0.$$
Hence there exists a coloring of $\cH$ without any monochromatic $SK_t$.
\end{proof}
\begin{remark}
For any fixed $r$, this gives asymptotically the same lower bound as Ramsey number $R^2(K_t,K_t)$, i.e. $R^r(SK_t, SK_t) > (1+o(1))\frac{\sqrt{2}}{e} t \sqrt{2}^t$.
\end{remark}

\section{Acknowledgements}

We would like to thank J\'er\^ome  Argot and Xueping Zhao for their generous help providing the computing resources. The third author would like to thank Gyula O.H.~Katona for his hospitality and guidance during the third author's visit in Budapest. We also like to thank the anonymous referee for their useful suggestions.  The research of the first and second authors was partially supported by the National Research, Development and Innovation Office, NKFIH, grant K116769. The research of the first author was supported in part by NSF grant DMS-1600811. The research of the second author was supported in part by IBS-R029-C1.

\end{document}